\newtheorem{theorem}{Theorem}
\newtheorem{proposition}[theorem]{Proposition}
\newtheorem*{proposition*}{Proposition}
\newtheorem{lemma}[theorem]{Lemma}
\newtheorem*{lemma*}{Lemma}
\newtheorem{corollary}[theorem]{Corollary}
\newtheorem*{theoremA*}{Theorem A}
\newtheorem*{theoremB*}{Theorem B}
\newtheorem*{auxiliarytheorem*}{Auxiliary Theorem}
\newtheorem*{corollary*}{Corollary}
\theoremstyle{definition}
\newtheorem{example}[theorem]{Example}
\newtheorem*{example*}{Example}
\theoremstyle{remark}
\newtheorem{remark}[theorem]{Remark}
\sloppy\pagestyle{plain}
\makeatletter\@addtoreset{equation}{section} \makeatother
\newcommand{\mumu}{\boldsymbol{\mu}}
\author{Ivan Cheltsov, Oliver Li, Sione Ma'u, Antoine Pinardin}
\title{K-stability and space sextic curves of genus three}
\thanks{Throughout this paper, all varieties are assumed to be projective and defined over~$\mathbb{C}$.}
\begin{document}

\begin{abstract}
We study Fano threefolds that can be obtained by blowing up the~three-dimensional projective space
along a~smooth curve of degree six and genus three.
We produce many new K-stable examples of such threefolds,
and we describe all finite groups that can act faithfully on them.
\end{abstract}

\address{\emph{Ivan Cheltsov}
\newline
\textnormal{University of Edinburgh,  Edinburgh, Scotland}
\newline
\textnormal{\texttt{I.Cheltsov@ed.ac.uk}}}

\address{\emph{Sione Ma'u}
\newline
\textnormal{University of Auckland, Auckland, New Zealand}
\newline
\textnormal{\texttt{s.mau@auckland.ac.nz}}}

\address{\emph{Oliver Li}
\newline
\textnormal{University of Melbourne, Melbourne, Australia}
\newline
\textnormal{\texttt{oli@unimelb.edu.au}}}

\address{\emph{Antoine Pinardin}
\newline
\textnormal{University of Edinburgh,  Edinburgh, Scotland}
\newline
\textnormal{\texttt{antoine.pinardin@ed.ac.uk}}}

\maketitle

\tableofcontents

\section{Introduction}
\label{section:intro}

Let $C$ be a~smooth quartic curve in $\mathbb{P}^2$, let $D$ be a~divisor of degree $2$ on the~curve $C$ such that
\begin{equation}
\label{equation:very-ample}\tag{$\diamondsuit$}
h^0\big(\mathcal{O}_{C}(D)\big)=0.
\end{equation}
Then $K_C+D$ is very ample \cite{Homma}, and the~linear system $|K_C+D|$ gives an embedding $\phi\colon C\hookrightarrow\mathbb{P}^3$.
We set $C_6=\phi(C)$. Then $C_6$ is a~smooth curve of degree $6$ and genus $3$.

Let $\pi\colon X\to\mathbb{P}^3$ be the~blow up of the~curve $C_6$.
Then $X$ is a~Fano threefold in the~deformation family \textnumero 2.12 in the~Mori--Mukai list, and every smooth member of this family can be obtained in this way.
Moreover, the~Fano threefold $X$ can be given in $\mathbb{P}^3\times\mathbb{P}^3$ by
\begin{equation}
\label{equation:P3-P3}\tag{$\clubsuit$}
(x_0,x_1,x_2,x_3)M_1\begin{pmatrix}
y_0\\
y_1\\
y_2\\
y_3
\end{pmatrix}=(x_0,x_1,x_2,x_3)M_2\begin{pmatrix}
y_0\\
y_1\\
y_2\\
y_3
\end{pmatrix}=(x_0,x_1,x_2,x_3)M_3\begin{pmatrix}
y_0\\
y_1\\
y_2\\
y_3
\end{pmatrix}=0
\end{equation}
for appropriate $4\times 4$ matrices $M_1$, $M_2$, $M_3$
such that $\pi$ is induced by the~projection to the~first factor,
where $([x_0:x_1:x_2:x_3],[y_0:y_1:y_2:y_3])$ are coordinates on $\mathbb{P}^3\times\mathbb{P}^3$.

Let $\pi^\prime\colon X\to\mathbb{P}^3$ be the~morphism induced by the~projection $\mathbb{P}^3\times\mathbb{P}^3\to\mathbb{P}^3$ to the~second factor.
Then $\pi^\prime$ is a~blow up of $\mathbb{P}^3$ along a~smooth curve $C_6^\prime$ of degree $6$ and genus $3$,
and the~$\pi^\prime$-exceptional surface is spanned by the~strict transforms of the~trisecants of the~curve $C_6$.
Furthermore, we have the~following commutative diagram:
\begin{equation}
\label{equation:diagram}\tag{$\bigstar$}
\xymatrix{
&X\ar@{->}[dl]_{\pi}\ar@{->}[dr]^{\pi^\prime}\\%
\mathbb{P}^3\ar@{-->}[rr]^\chi && \mathbb{P}^3}
\end{equation}
where $\chi$ is the~birational map given by the~linear system consisting of all cubic surfaces containing $C_6$.
Note that the~curves $C_6$ and $C_6^\prime$ are isomorphic, but they are not necessarily projectively isomorphic.

We can find the~equations of the~curves $C_6$ and $C_6^\prime$ as follows.
Rewrite \eqref{equation:P3-P3} as
$$
\begin{cases}
L_{10}y_0+L_{11}y_1+L_{12}y_2+L_{13}y_3=0,\\
L_{20}y_0+L_{21}y_1+L_{22}y_2+L_{23}y_3=0,\\
L_{30}y_0+L_{31}y_1+L_{32}y_2+L_{33}y_3=0,
\end{cases},
$$
where the~$L_{ij}$'s are linear functions in $x_0$, $x_1$, $x_2$, $x_3$. Set
$$
M=\begin{pmatrix}
L_{10} & L_{11} & L_{12} & L_{13}\\
L_{20} & L_{21} & L_{22} & L_{23}\\
L_{30} & L_{31} & L_{32} & L_{33}
\end{pmatrix}.
$$
Let $f_0$, $f_1$, $f_2$, $f_3$ be the~determinants of the~$3\times 3$ matrices obtained from the~matrix $M$ by removing its~first, second, third, fourth columns, respectively.
Then $C_6=\{f_0=0,f_1=0,f_2=0,f_3=0\}$, and the~birational map $\chi\colon\mathbb{P}^3\dasharrow\mathbb{P}^3$ in the~diagram \eqref{equation:diagram} is given by
$$
[x_0:x_1:x_2:x_3]\mapsto\big[f_0:f_1:f_2:f_3\big]
$$
up to a~composition with an automorphism of the~projective space $\mathbb{P}^3$.
Similarly, one can also describe the~defining equations of the~sextic curve~$C_6^\prime$.

\begin{example}[{\cite{Edge,Book}}]
\label{example:Edge}
Let
$$
X=\Big\{x_0y_1+x_1y_0-\sqrt{2}x_2y_2=0,x_0y_2+x_2y_0-\sqrt{2}x_3y_3=0,x_0y_3+x_3y_0-\sqrt{2}x_1y_1=0\Big\}\subset \mathbb{P}^3\times\mathbb{P}^3.
$$
Then $X$ is a~smooth Fano threefold in the~family \textnumero 2.12, the~curve $C_6$ is given by
$$
\left\{\aligned
&2\sqrt{2}x_1x_2x_3-x_0^3=0,\\
&x_0^2x_1+\sqrt{2}x_0x_2^2+2x_2x_3^2=0,\\
&x_0^2x_2+\sqrt{2}x_0x_3^2+2x_1^2x_3=0,\\
&x_0^2x_3+\sqrt{2}x_0x_1^2+2x_1x_2^2=0,
\endaligned
\right.
$$
and $C_6^\prime$ is given by the~same equations replacing each $x_i$ by $y_i$.
One has $\mathrm{Aut}(X)\simeq\mathrm{PSL}_2(\mathbb{F}_7)\times\mumu_2$,
and $X$ is the~only smooth Fano threefold in the~deformation family \textnumero 2.12 that admits a~faithful action of the~Klein simple group $\mathrm{PSL}_2(\mathbb{F}_7)$.
The map $\chi$ in \eqref{equation:diagram} can be chosen to be an involution.
\end{example}

The following result has been proven in \cite{Book}.

\begin{theorem}[{\cite[\S~5.4]{Book}}]
\label{theorem:Klein-group}
Let $X$ be the~Fano threefold from Example~\ref{example:Edge}. Then $X$ is K-stable.
\end{theorem}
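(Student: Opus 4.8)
The plan is to show that $X$ is K-polystable; since $\mathrm{Aut}(X)\cong\mathrm{PSL}_2(\mathbb{F}_7)\times\mumu_2$ is finite, K-polystability is equivalent to K-stability. By the equivariant K-stability criterion (Datar--Sz\'ekelyhidi, Zhuang) together with the valuative description of the stability threshold (Fujita--Li, Blum--Jonsson), it suffices to prove that $\delta_G(X)>1$, where $G=\mathrm{PSL}_2(\mathbb{F}_7)\subseteq\mathrm{Aut}(X)$; I would work with this subgroup rather than with the whole automorphism group because both $\pi$ and $\pi'$ are $G$-equivariant, so $E$, $E'$ and the two extremal contractions are all $G$-invariant data. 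Thus we must establish $A_X(F)>S_X(F)$ for every $G$-invariant prime divisor $F$ over $X$, and the argument splits according to $\dim Z$, where $Z=C_X(F)$ is the (necessarily $G$-invariant, irreducible) centre.

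First I would dispose of the easy centres. Since $\pi$ is $G$-equivariant and the $\mathrm{PSL}_2(\mathbb{F}_7)$-action on $\mathbb{P}^3$ is the projectivization of the $4$-dimensional irreducible representation of the binary cover $2.\mathrm{PSL}_2(\mathbb{F}_7)$, there is no $G$-fixed point on $\mathbb{P}^3$ and hence none on $X$, so $\dim Z\ge 1$. If $\dim Z=2$, i.e. $F=S$ is a $G$-invariant surface, then $A_X(S)=1$ and it is enough to prove $S_X(S)<1$. Working in $\mathrm{Pic}(X)=\mathbb{Z}H\oplus\mathbb{Z}E$ with $-K_X=4H-E$, one has $\mathrm{Nef}(X)=\langle H,\,3H-E\rangle$ and $\overline{\mathrm{Eff}}(X)=\langle E,\,8H-3E\rangle$, the second ray being the class of the other exceptional divisor $E'=8H-3E$; a short Zariski-decomposition computation then gives $S_X(E)=S_X(E')=\tfrac{11}{60}$. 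Every remaining $G$-invariant surface is the strict transform of a $G$-invariant surface in $\mathbb{P}^3$ of some degree $a$, and since $\mathbb{P}^3$ has no $\mathrm{PSL}_2(\mathbb{F}_7)$-invariant plane or quadric, and no cubic surface admits a faithful $\mathrm{PSL}_2(\mathbb{F}_7)$-action, one has $a\ge 4$; bounding the pseudo-effective threshold of $-K_X-tS$ from above by $4/a\le 1$ then yields $S_X(S)<1$.

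The substance of the proof is the case $\dim Z=1$, where $Z$ is a $G$-invariant irreducible curve, and I would estimate $A_X(F)/S_X(F)$ by the Abban--Zhuang method: pick a $G$-invariant surface $S\supseteq Z$ and bound $\delta_Z(X)$ below by the minimum of $A_X(S)/S_X(S)$ and a refined $S_X$-invariant attached to the pair $(S,Z)$, each of which must be shown to exceed $1$. If $Z\subset E$, then $\pi(Z)\subseteq C_6$; because $\mathrm{PSL}_2(\mathbb{F}_7)$ acts on $C_6$ (a model of the Klein quartic) without fixed points, $Z$ must dominate $C_6$, so $Z$ is a $G$-invariant multisection of the $\mathbb{P}^1$-bundle $E\to C_6$, and I would take $S=E$, for which $A_X(E)/S_X(E)=\tfrac{60}{11}>1$, and reduce to a computation on $E$ involving the finitely many such multisections. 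The case $Z\subset E'$ is symmetric, via $E'\to C_6'$. Otherwise $\pi(Z)$ is a $G$-invariant irreducible curve in $\mathbb{P}^3$ distinct from $C_6$ and $C_6'$; its degree is bounded below by invariant theory, it is cut out by $G$-invariant surfaces, and one takes $S$ to be the strict transform of such a surface (for instance the invariant quartic) and runs the same two-step estimate.

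The hard part is exactly this curve analysis, and it rests on two ingredients. The first is invariant-theoretic: the classification, with degree bounds, of $\mathrm{PSL}_2(\mathbb{F}_7)$-invariant curves and surfaces in $\mathbb{P}^3$, together with the geometry of the invariant quartic surface and of the $G$-invariant multisections of $E\to C_6$ and $E'\to C_6'$. The second is the explicit evaluation, for each surviving candidate curve $Z$, of the refined $S_X$-invariant on the chosen surface $S$ — a Zariski-decomposition computation on a possibly singular surface, complicated by the fact that $Z$ need not lie in the smooth locus of $S$. Once all of these local ratios are verified to be $>1$, together with $S_X(E)=S_X(E')=\tfrac{11}{60}<1$ and the bound $S_X(S)<1$ for the other invariant surfaces, we obtain $\delta_G(X)>1$ and conclude that $X$ is K-stable.
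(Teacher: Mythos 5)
Your framework is the right one and is essentially that of the source the paper cites for this statement (\cite[\S~5.4]{Book}): pass to K-polystability (legitimate since $\mathrm{Aut}(X)$ is finite), invoke Zhuang's equivariant criterion for $G=\mathrm{PSL}_2(\mathbb{F}_7)$, observe that the underlying $4$-dimensional representation of the binary cover is irreducible so $G$ has no fixed points and hence no zero-dimensional $G$-invariant centres, and dispose of divisorial centres via $S_X(E)=S_X(E')=\tfrac{11}{60}$ and the absence of invariant surfaces of degree at most $3$. (For the record, the divisorial case needs none of this invariant theory: Fujita's criterion, as used in the proof of Theorem~A, gives $\beta(S)>0$ for \emph{every} surface $S\subset X$.) Note also that within the present paper the statement follows at once from Theorem~A together with Corollary~\ref{corollary:3}, since $\mathrm{Aut}(\mathbb{P}^3,C_6)\simeq\mathrm{PSL}_2(\mathbb{F}_7)$ is not cyclic.

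The genuine gap is in the one-dimensional case, which you correctly identify as the heart of the matter but do not carry out, and the plan you sketch for it has a structural defect. You propose to classify the $G$-invariant irreducible curves in $\mathbb{P}^3$ (and the invariant multisections of $E\to C_6$) and then run an Abban--Zhuang estimate for each on a chosen invariant surface such as Edge's quartic. But $G$-invariant irreducible curves need not form a finite list --- they can move in positive-dimensional families (e.g.\ invariant complete intersections drawn from pencils of invariant surfaces) --- and you supply no degree bound or finiteness statement that would make ``classify and check each'' a terminating procedure. Moreover, your proposed flag surface $S\sim 4H-mE$ is not a del Pezzo surface, and you would need both $S_X(S)<1$ and $S(W^S_{\bullet,\bullet};Z)<1$ on a surface where $Z$ may sit badly; none of that is set up. The way this is actually resolved (both in \cite{Book} and in the proof of Theorem~A here) is to \emph{drop equivariance} at this stage: for a curve $Z\not\subset E\cup E'$ one takes a general point $P\in Z$ and a general member $S\in|H|$ through $P$, which is a smooth cubic surface by Lemma~\ref{lemma:cubic-surface}, and estimates via the flag $P\in S\subset X$; the only curves requiring bespoke treatment are the multisections of the ruled surfaces $E$ and $E'$, handled on $E$ itself as in Lemma~\ref{lemma:E}. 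This localisation at a general point eliminates any need to classify invariant curves. As written, your proposal sets up the correct reduction but leaves the decisive computations --- precisely the ones that make the theorem true --- unperformed and, in the form proposed, not obviously performable.
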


Hence, a~general member of the~family \textnumero 2.12 is K-stable, since K-stability is an open condition.
We expect that every smooth Fano threefold in this family is K-stable.
To~show this, it is enough to prove that
$$
\beta(\mathbf{F})=A_X(\mathbf{F})-S_X(\mathbf{F})>0
$$
for every prime divisor $\mathbf{F}$ over $X$ \cite{Fujita2019,Li}, where $A_X(\mathbf{F})$ is the~log discrepancy of the~divisor $\mathbf{F}$, and
$$
S_X\big(\mathbf{F}\big)=\frac{1}{(-K_X)^3}\int\limits_0^{\infty}\mathrm{vol}\big(-K_X-u\mathbf{F}\big)du.
$$
Unfortunately, we are unable to prove this result at the~moment. Instead, we prove a~weaker result.
To state it, let $E$ be the~$\pi$-exceptional surface, and let $E^\prime$ be the~$\pi^\prime$-exceptional surface.

\begin{theoremA*}
Let $\mathbf{F}$~be~a~prime divisor over $X$ such that $\beta(\mathbf{F})\leqslant 0$, and let $Z$ be its center on $X$.
Then $Z$ is a~point in the~intersection $E\cap E^\prime$.
\end{theoremA*}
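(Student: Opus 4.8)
The plan is to analyse the center $Z=C_X(\mathbf{F})$ according to its dimension, combining the valuative criterion $\beta(\mathbf{F})=A_X(\mathbf{F})-S_X(\mathbf{F})$ \cite{Fujita2019,Li} with the inductive (Abban--Zhuang type) estimates systematised in \cite{Book}. I would first record the numerical input: since $\rho(X)=2$, writing $H=\pi^{*}\mathcal{O}_{\mathbb{P}^3}(1)$ and $H'=(\pi')^{*}\mathcal{O}_{\mathbb{P}^3}(1)$, one has $H'\sim 3H-E$, $E\sim 3H-H'$ and $E'\sim 8H-3E$; the nef cone is $\langle H,H'\rangle$, the pseudoeffective cone is $\langle E,E'\rangle$, $-K_X\sim 4H-E$, $(-K_X)^3=20$, and $\overline{\mathrm{NE}}(X)=\langle f,f'\rangle$, where $f$ is a ruling of $E\to C_6$ and $f'$ is the strict transform of a trisecant of $C_6$ (a ruling of $E'\to C_6^\prime$), with $H\cdot f=0$, $E\cdot f=-1$, $H\cdot f'=1$, $E\cdot f'=3$.

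First I would rule out $Z$ being a surface, by showing $S_X(S)<1$ for every prime divisor $S$ on $X$. If $[S]$ lies on the boundary of $\overline{\mathrm{Eff}}(X)$, then $S\in\{E,E'\}$: a Zariski-chamber computation gives $\mathrm{vol}(-K_X-uE)=28(1+u)^3-72(1+u)^2+64$ for $0\le u\le\tfrac13$ and $(4-8u)^3$ for $\tfrac13\le u\le\tfrac12$, whence $S_X(E)=S_X(E')=\tfrac{11}{60}$. Otherwise $S\neq E,E'$ forces $S\cdot f\ge 0$ and $S\cdot f'\ge 0$, so $[S]=aH+bH'$ is nef with $a,b\in\mathbb{Z}_{\ge 0}$ not both zero, and monotonicity of $S_X$ under $\succeq$ gives $S_X(S)\le\max\{S_X(H),S_X(H')\}=\tfrac{53}{120}<1$. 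Since the only divisorial valuation centred at the generic point of a prime divisor $S$ is a positive multiple of $\mathrm{ord}_S$, it follows that $\beta(\mathbf{F})=A_X(\mathbf{F})\bigl(1-S_X(S)\bigr)>0$ whenever $Z$ is a divisor, so $Z$ is not a surface.

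Next, for $Z$ a curve, or a point not lying on $E\cap E'$, I would produce $\beta(\mathbf{F})>0$ via an auxiliary flag. The hypothesis $\beta(\mathbf{F})\le 0$ first bounds the anticanonical degree of $Z$; as $-K_X\cdot Z\ge\max\{\deg\pi(Z),\deg\pi'(Z)\}$ when $Z\not\subseteq E,E'$, this bounds the degrees of $\pi(Z)$ and $\pi'(Z)$, so $Z$ may be assumed to lie on an explicit surface $S$: equal to $E$ or $E'$ when $Z$ meets the exceptional surfaces, otherwise the strict transform of a general plane, quadric, or cubic of $\mathbb{P}^3$ through $\pi(Z)$ or $\pi'(Z)$; when $Z$ is a point one also fixes a curve $C\subseteq S$ through $Z$. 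Because $S_X(S)<1$ by the previous step, the estimates of \cite{Book} reduce matters to checking $S(W^{S}_{\bullet,\bullet};Z)<1$ for a curve $Z$, respectively $S(W^{S}_{\bullet,\bullet};C)<1$ and $S(W^{S,C}_{\bullet,\bullet,\bullet};Z)<1$ for a point $Z$, where the $W$'s are the successive refinements of $-K_X$ along the flag. These are two- and one-dimensional computations on $S$ --- a weak del Pezzo surface, a rational scroll, or a $\mathbb{P}^1$-bundle over $C_6$ or $C_6^\prime$ --- to be run through the finite list of candidates for $Z$: lines, conics, plane cubics and twisted cubics of $\mathbb{P}^3$; the rulings $f$ and $f'$; the components of $E\cap E'$; other low-degree sections of $E\to C_6$ and $E'\to C_6^\prime$; and the incidences of points with all of these. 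In every case the relevant restricted volume should decay quickly enough to keep the invariant below $1$ --- the sole exception being a point of $E\cap E'$, which is thus the only configuration that can survive, yielding the theorem.

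The hard part will be exactly those boundary cases: curves and points lying on $E$ or on $E'$. There the only distinguished surface through $Z$ is $E$ itself (or $E'$), which is a $\mathbb{P}^1$-bundle over the genus-$3$ curve $C_6$ --- neither Fano nor rational, and with a delicate nef/pseudoeffective structure --- so that the positive part of the Zariski decomposition of $-K_X-uE$ restricts to $E$ in a way that need not be nef, and the refined mobile systems can acquire fixed curves along $Z$. Controlling these nested Zariski decompositions, and in particular showing that the flag invariant reaches $1$ \emph{precisely} when $Z\subseteq E\cap E'$, will require the explicit geometry of the degree-$8$ trisecant surface of $C_6$ and of $E\cap E'$ as a multisection of $E\to C_6$; that is where essentially all of the work is concentrated.
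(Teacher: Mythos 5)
Your treatment of the divisorial case is correct and essentially self-contained: $S_X(E)=S_X(E')=\tfrac{11}{60}$, every other prime divisor is nef and hence dominated by $H$ or $H'$ with $S_X(H)=S_X(H')=\tfrac{53}{120}$, so no surface can be the center. (The paper simply cites \cite{Fujita2016} here, so your route is a reasonable, slightly more explicit alternative.) The problem is everything after that: for curves and points you have written a programme, not a proof, and the programme itself diverges from what actually works. You propose to bound $-K_X\cdot Z$ using $\beta(\mathbf{F})\leqslant 0$ and then to enumerate a ``finite list of candidates for $Z$'' (lines, conics, twisted cubics, rulings, sections of $E\to C_6$, \dots) and check each by a flag computation. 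Neither step is justified: you do not prove the degree bound, you do not show the resulting list is finite or complete, and you do not carry out a single one of the case checks. The paper's proof shows this enumeration is unnecessary: for any center $Z$ that is not a point of $E\cap E'$ and is not a curve in $E$ (resp.\ $E'$) dominating $C_6$ (resp.\ $C_6'$), one picks a \emph{general} point $P\in Z$, which may be assumed to lie off $E$ (or off $E'$, by symmetry), takes $S$ to be the strict transform of a general plane through $\pi(P)$ --- shown to be a smooth cubic surface by ruling out trisecants and conics through the six points $\Pi\cap C_6$ --- and applies the Abban--Zhuang point inequality \eqref{equation:Kento-point} on the blow-up of $S$ at $P$. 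This handles all such $Z$, curves and points alike, in one uniform computation whose only case distinctions are whether $P$ lies on a line of $S$ and whether $P\in E'$.

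The second, more serious issue is that you explicitly defer the cases $Z\subseteq E$ or $Z\subseteq E'$ to future work (``that is where essentially all of the work is concentrated''), while correctly observing that they are delicate. These are not optional: a curve in $E$ dominating $C_6$ (for instance a component of $E\cap E'$, which is a trisection of $E\to C_6$) cannot be reached by the general-plane argument, and the paper devotes Lemma~\ref{lemma:E} to it, working on the ruled surface $E\to C_6$, pinning down its invariant $e\in\{-2,0,2,4,6,8\}$ via $E^3=-28$ and the nefness of $H'$, bounding $\mathrm{ord}_C(E'\vert_E)\leqslant 3$, and estimating $S(W^{E}_{\bullet,\bullet};C)$ by replacing $C\equiv aC_0+b\ell$ with the extremal classes $C_0$ and $2C_0-2\ell$. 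Without this (and its mirror for $E'$), the reduction ``either $P\notin E$ or $P\notin E'$'' that both you and the paper need is unavailable, and the theorem is not proved. So while your overall architecture (dimension of the center, nested Zariski decompositions, Abban--Zhuang flags) is the right one, the proposal has a genuine gap precisely where the statement has content.
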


Let us present applications of this result.
By \cite[Corollary~4.14]{Zhuang}, Theorem A implies

\begin{corollary}
\label{corollary:1}
If $\mathrm{Aut}(X)$ does not fix points in $E\cap E^\prime$, then $X$ is K-stable.
\end{corollary}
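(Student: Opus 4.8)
The plan is to deduce the corollary purely formally from Theorem~A together with the equivariant form of the valuative criterion for K-stability. First I would record that $G:=\mathrm{Aut}(X)$ is finite. Since $\rho(X)=2$, the cone $\overline{\mathrm{NE}}(X)$ has exactly two extremal rays, so $G$ permutes them and hence permutes the pair of divisors $\{E,E^\prime\}$; the subgroup $G_0\leqslant G$ of index at most $2$ that fixes $E$ descends along $\pi$ to a subgroup of $\mathrm{PGL}_4(\mathbb{C})$ preserving the non-degenerate sextic $C_6$, and such a subgroup injects into $\mathrm{Aut}(C_6)$, which is finite because $C_6$ has genus~$3$. In particular $G$ is finite, hence reductive, so \cite[Corollary~4.14]{Zhuang} applies and reduces the problem to the following: it suffices to show that $\beta(\mathbf{F})>0$ for every prime divisor $\mathbf{F}$ over $X$ whose valuation $\mathrm{ord}_{\mathbf{F}}$ is $G$-invariant.

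Next I would argue by contradiction. Suppose $X$ is not K-stable. By the reduction just quoted there is a $G$-invariant prime divisor $\mathbf{F}$ over $X$ with $\beta(\mathbf{F})\leqslant 0$. Let $Z$ be the center of $\mathbf{F}$ on $X$. The $G$-invariance of $\mathrm{ord}_{\mathbf{F}}$ forces $Z$ to be a $G$-invariant closed subvariety of $X$. On the other hand $\beta(\mathbf{F})\leqslant 0$ puts us exactly in the hypothesis of Theorem~A, which asserts that $Z$ is a single point lying in $E\cap E^\prime$. Putting these together, $Z$ is a point of $E\cap E^\prime$ fixed by the whole group $\mathrm{Aut}(X)$ --- note that $E\cap E^\prime$ is itself $\mathrm{Aut}(X)$-stable, since $\mathrm{Aut}(X)$ permutes $\{E,E^\prime\}$, so the statement of the corollary is meaningful. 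This contradicts the assumption that $\mathrm{Aut}(X)$ fixes no point of $E\cap E^\prime$, and therefore $X$ is K-stable.

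All the geometric content has already been absorbed into Theorem~A, so I do not anticipate a serious obstacle here; the two points that need to be handled with a little care are (i) invoking \cite{Zhuang} in precisely the right form --- namely that a failure of K-stability is detected by some divisor over $X$ whose valuation is $\mathrm{Aut}(X)$-invariant, rather than merely by an arbitrary divisor over $X$ --- and (ii) the elementary observation that the center on $X$ of an $\mathrm{Aut}(X)$-invariant valuation is an $\mathrm{Aut}(X)$-invariant subvariety, which is what upgrades the conclusion of Theorem~A from ``$Z$ is a point of $E\cap E^\prime$'' to ``$Z$ is an $\mathrm{Aut}(X)$-fixed point of $E\cap E^\prime$''. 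No further geometry of $X$ is required.
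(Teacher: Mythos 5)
Your proposal is correct and follows exactly the route the paper intends: the paper derives Corollary~\ref{corollary:1} immediately from Theorem~A via \cite[Corollary~4.14]{Zhuang}, and your write-up simply makes explicit the two standard points (finiteness of $\mathrm{Aut}(X)$ and the fact that the center of an $\mathrm{Aut}(X)$-invariant valuation is $\mathrm{Aut}(X)$-invariant, hence a fixed point once Theorem~A forces it to be a point of $E\cap E^\prime$). No gaps.
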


Since the~action of the~group $\mathrm{Aut}(\mathbb{P}^3,C_6)$ lifts to $X$, Corollary~\ref{corollary:1} implies

\begin{corollary}
\label{corollary:2}
If $\mathrm{Aut}(\mathbb{P}^3,C_6)$ does not fix a~point in $C_6$, then $X$ is $K$-stable.
\end{corollary}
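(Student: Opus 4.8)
The plan is to deduce Corollary~\ref{corollary:2} formally from Corollary~\ref{corollary:1} together with the fact, recalled just above its statement, that the action of $\mathrm{Aut}(\mathbb{P}^3,C_6)$ on $\mathbb{P}^3$ lifts to $X$. More precisely, I would argue by contraposition: assuming that $\mathrm{Aut}(X)$ fixes a point of $E\cap E^\prime$, I will produce a point of $C_6$ fixed by $\mathrm{Aut}(\mathbb{P}^3,C_6)$, and then invoke Corollary~\ref{corollary:1}.

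The first step is to make the lift precise. Since $\pi\colon X\to\mathbb{P}^3$ is the blow up along $C_6$, the universal property of the blow up attaches to every $g\in\mathrm{Aut}(\mathbb{P}^3,C_6)$ a unique automorphism $\widetilde g$ of $X$ with $\pi\circ\widetilde g=g\circ\pi$; the assignment $g\mapsto\widetilde g$ is an injective group homomorphism (injectivity because $\pi$ is birational), so I will identify $\mathrm{Aut}(\mathbb{P}^3,C_6)$ with a subgroup of $\mathrm{Aut}(X)$ for which $\pi$ is equivariant. In particular this subgroup preserves the $\pi$-exceptional surface $E$ and acts on the natural morphism $\pi|_E\colon E\to C_6$ covering its action on $C_6$, so $\pi$ sends any $\mathrm{Aut}(\mathbb{P}^3,C_6)$-fixed point of $E$ to an $\mathrm{Aut}(\mathbb{P}^3,C_6)$-fixed point of $C_6$.

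Now I run the contrapositive. Suppose $\mathrm{Aut}(X)$ fixes some point $P\in E\cap E^\prime$. Then the subgroup $\mathrm{Aut}(\mathbb{P}^3,C_6)\leqslant\mathrm{Aut}(X)$ fixes $P$ as well, and since $P\in E$ the point $\pi(P)$ lies on $\pi(E)=C_6$ and is fixed by $\mathrm{Aut}(\mathbb{P}^3,C_6)$; this contradicts the hypothesis. Hence $\mathrm{Aut}(X)$ fixes no point of $E\cap E^\prime$ (which holds automatically, for instance, when $E\cap E^\prime=\varnothing$), and Corollary~\ref{corollary:1} gives that $X$ is K-stable.

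The argument is essentially bookkeeping, and I do not expect a genuine obstacle; the one place to be slightly careful is that $\mathrm{Aut}(X)$ appearing in Corollary~\ref{corollary:1} can be strictly larger than $\mathrm{Aut}(\mathbb{P}^3,C_6)$ --- for example when the birational map $\chi$ of \eqref{equation:diagram} is regular, as in Example~\ref{example:Edge}, in which case $\mathrm{Aut}(X)$ may even interchange $E$ and $E^\prime$. This does no harm, because I only use the inclusion $\mathrm{Aut}(\mathbb{P}^3,C_6)\leqslant\mathrm{Aut}(X)$ and the $\pi$-equivariance of this subgroup: an $\mathrm{Aut}(X)$-fixed point of $E\cap E^\prime$ is a fortiori fixed by the subgroup, and $\pi$ then carries it into $C_6$.
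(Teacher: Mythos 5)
Your argument is correct and is exactly the paper's (one-line) derivation, made explicit: the paper likewise deduces Corollary~\ref{corollary:2} from Corollary~\ref{corollary:1} by noting that the $\mathrm{Aut}(\mathbb{P}^3,C_6)$-action lifts to $X$, so an $\mathrm{Aut}(X)$-fixed point of $E\cap E^\prime$ would map under $\pi$ to an $\mathrm{Aut}(\mathbb{P}^3,C_6)$-fixed point of $C_6$. The extra care you take about $\mathrm{Aut}(X)$ possibly being larger than $\mathrm{Aut}(\mathbb{P}^3,C_6)$ is appropriate and does not change the argument.
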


Since the group $\mathrm{Aut}(\mathbb{P}^3,C_6)$ acts faithfully on the~curve $C_6$, Corollary~\ref{corollary:2} implies
the following generalization of Theorem~\ref{theorem:Klein-group},
which has more applications (see Section~\ref{section:examples}).

\begin{corollary}
\label{corollary:3}
If $\mathrm{Aut}(\mathbb{P}^3,C_6)$ is not cyclic, then $X$ is $K$-stable.
\end{corollary}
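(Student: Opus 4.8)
The plan is to reduce Corollary~\ref{corollary:3} to Corollary~\ref{corollary:2} via an elementary fact about group actions on curves. Write $G=\mathrm{Aut}(\mathbb{P}^3,C_6)$. Since $C_6$ is a smooth curve of genus $3\geqslant 2$, its automorphism group is finite; as $G$ acts faithfully on $C_6$, the group $G$ is itself finite.

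We argue by contraposition, so suppose that $G$ fixes a point $p\in C_6$. I would then invoke the standard fact that the stabilizer of a point on a smooth projective curve, acting faithfully, is cyclic. Indeed, the linearization of the action at $p$ furnishes a homomorphism $G\to\mathrm{GL}\big(T_pC_6\big)\cong\mathbb{C}^{\ast}$, and this homomorphism is injective: an automorphism of a smooth curve which fixes $p$ with trivial differential there must be the identity, since writing it in a local parameter $t$ at $p$ as $t\mapsto t+a_kt^k+\cdots$ with $a_k\neq 0$ and $k\geqslant 2$, after $n$ iterations one gets $t\mapsto t+na_kt^k+\cdots$, which is never the identity for $n\geqslant 1$, so a nontrivial such element cannot have finite order. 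Thus $G$ embeds into $\mathbb{C}^{\ast}$, and a finite subgroup of $\mathbb{C}^{\ast}$ is cyclic, whence $G$ is cyclic.

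Contrapositively: if $G$ is not cyclic, then $G$ fixes no point of $C_6$, and Corollary~\ref{corollary:2} then yields that $X$ is K-stable. There is essentially no obstacle here beyond correctly citing the linearization fact above; all of the substantive content of the statement has already been delivered by Corollary~\ref{corollary:2}, and hence ultimately by Theorem~A.
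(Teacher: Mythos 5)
Your argument is correct and follows essentially the same route as the paper: reduce to Corollary~\ref{corollary:2} and observe that a finite group fixing a point of $C_6$ acts faithfully on the one-dimensional tangent space there, hence is cyclic. The paper simply cites \cite[Lemma~2.7]{FZ} for the faithfulness of the linearization, whereas you supply the standard local-parameter computation; the content is identical.
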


\begin{proof}
If the~group $\mathrm{Aut}(\mathbb{P}^3,C_6)$ fixes a~point $P\in C_6$, it acts faithfully on the~one-dimensional tangent space to the~curve $C_6$ at the~point $P$ by \cite[Lemma~2.7]{FZ},
so that $\mathrm{Aut}(\mathbb{P}^3,C_6)$ is cyclic.
\end{proof}

What do we know about $\mathrm{Aut}(X)$? This group is finite \cite{CPS},
and we have the~following~exact sequence:
$$
1\rightarrow \mathrm{Aut}\big(\mathbb{P}^3,C_6\big) \rightarrow \mathrm{Aut}(X) \rightarrow \mumu_2,
$$
where $\mathrm{Aut}(\mathbb{P}^3,C_6)\simeq\mathrm{Aut}(C,[D])$, and the~final homomorphism is surjective $\iff$
$\mathrm{Aut}(X)$ contains an~element that swaps $E$ and $E^\prime$.
For~instance, if $X$ is the~smooth Fano threefold from Example~\ref{example:Edge},
then the~group $\mathrm{Aut}(X)$ contains such an~element --- it is the~involution given~by
$$
\big([x_0:x_1:x_2:x_3],[y_0:y_1:y_2:y_3]\big)\mapsto\big([y_0:y_1:y_2:y_3],[x_0:x_1:x_2:x_3]\big),
$$
which implies that $\mathrm{Aut}(X)\simeq\mathrm{PSL}_2(\mathbb{F}_7)\times\mumu_2$ in this case.
In Section~\ref{section:Aut}, we will discuss the~possibilities for the~group $\mathrm{Aut}(X)$ in more details.
In particular, we will present a criterion when $\mathrm{Aut}(X)$ contains an~element that swaps $E$ and $E^\prime$,
and we will prove the~following result (cf. \cite[Theorem 1.1]{WeiYu}).

\begin{theoremB*}
A finite group $G$ has a~faithful action on a~smooth Fano threefold in the~deformation family \textnumero 2.12
if and only if $G$ is isomorphic to a~subgroup of $\mathrm{PSL}_2(\mathbb{F}_7)\times\mumu_2$ or $\mumu_4^2\rtimes\mathfrak{S}_3$.
\end{theoremB*}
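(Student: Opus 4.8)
The plan is to split the statement into a realization half (``if'') and a bounding half (``only if''), both organized around the exact sequence
$$
1\longrightarrow \mathrm{Aut}\big(\mathbb{P}^3,C_6\big)\longrightarrow \mathrm{Aut}(X)\longrightarrow \mumu_2
$$
together with the isomorphism $\mathrm{Aut}(\mathbb{P}^3,C_6)\cong\mathrm{Aut}(C,[D])$, where $C$ is the plane quartic and $[D]$ is the degree~$2$ class used to build $X$. Since a smooth plane quartic is its own canonical model, $\mathrm{Aut}(C)$ lies on the classically known list of finite subgroups of $\mathrm{PGL}_3$ stabilizing a smooth quartic, and $N:=\mathrm{Aut}(\mathbb{P}^3,C_6)$ is the subgroup of such a group that preserves in addition the class $[D]$; thus $\mathrm{Aut}(X)$ is either $N$ or an extension of $\mumu_2$ by $N$, and Hurwitz's bound already gives $|N|\leqslant 168$, with equality only for the Klein quartic, in which case $N\cong\mathrm{PSL}_2(\mathbb{F}_7)$.

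For the ``if'' direction it suffices to realize each of $\mathrm{PSL}_2(\mathbb{F}_7)\times\mumu_2$ and $\mumu_4^2\rtimes\mathfrak{S}_3$ as $\mathrm{Aut}(X)$ for a single member $X$ of the family, since then every subgroup acts faithfully on that member by restriction. The first is the Edge threefold of Example~\ref{example:Edge}. For the second I would take $C$ to be the Fermat quartic $\{x^4+y^4+z^4=0\}$, whose automorphism group in $\mathbb{P}^2$ is exactly $\mumu_4^2\rtimes\mathfrak{S}_3$, choose a degree~$2$ class $[D]$ on $C$ invariant under $\mathrm{Aut}(C)$ with $h^0(\mathcal{O}_C(D))=0$ (one checks such a class exists — for instance an $\mathrm{Aut}(C)$-fixed even theta characteristic, which automatically has $h^0=0$ because $C$ is not hyperelliptic), form $C_6=\phi(C)\subset\mathbb{P}^3$ via $|K_C+D|$, and blow it up. Then $N\cong\mathrm{Aut}(C,[D])=\mumu_4^2\rtimes\mathfrak{S}_3$, so $\mathrm{Aut}(X)$ contains $\mumu_4^2\rtimes\mathfrak{S}_3$, which is all this direction needs.

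For the ``only if'' direction I would, first, go through the classification of automorphism groups of smooth plane quartics and, for each group $\Gamma$ on the list and each subgroup $N\leqslant\Gamma$, decide whether there is an $N$-invariant degree~$2$ class with $h^0=0$ (equivalently an $N$-fixed point of $\mathrm{Pic}^2(C)$ lying off the effective locus), thereby producing the complete list of groups $N$ that actually occur. Second, using the criterion for the existence of an $E\leftrightarrow E'$ swapping element to be proved in Section~\ref{section:Aut}, I would analyze the last arrow of the exact sequence: show that when $N\cong\mumu_4^2\rtimes\mathfrak{S}_3$ no swapping element exists, so $\mathrm{Aut}(X)=N=\mumu_4^2\rtimes\mathfrak{S}_3$; show that when $N\cong\mathrm{PSL}_2(\mathbb{F}_7)$ a swapping element, if present, induces an inner automorphism of $N$, so that $\mathrm{Aut}(X)$ is the direct product $\mathrm{PSL}_2(\mathbb{F}_7)\times\mumu_2$ rather than $\mathrm{PGL}_2(\mathbb{F}_7)$; and check that for every remaining $N$, both $N$ and its possible $\mumu_2$-extension embed into $\mathrm{PSL}_2(\mathbb{F}_7)\times\mumu_2$ or into $\mumu_4^2\rtimes\mathfrak{S}_3$. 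Combining the two halves yields the theorem.

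The main obstacle is the bookkeeping in the ``only if'' half, concentrated at the two largest groups and their interaction with the swap: a $\mumu_2$-extension of $\mumu_4^2\rtimes\mathfrak{S}_3$ would have order $192$ and embed into neither target group, and an outer $\mumu_2$-extension of $\mathrm{PSL}_2(\mathbb{F}_7)$ would be $\mathrm{PGL}_2(\mathbb{F}_7)$, which also embeds into neither; hence the ``no swap when $N\cong\mumu_4^2\rtimes\mathfrak{S}_3$'' and ``inner swap when $N\cong\mathrm{PSL}_2(\mathbb{F}_7)$'' statements are not cosmetic but must be extracted carefully from the swap criterion. The intermediate cases — groups whose order is divisible by $16$, and the order~$48$ automorphism group of a plane quartic — require individual checks to confirm that neither they nor their $\mumu_2$-extensions escape the two target groups; I expect these, rather than the main line of argument, to consume most of the effort.
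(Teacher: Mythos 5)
Your realization of $\mumu_4^2\rtimes\mathfrak{S}_3$ fails at a concrete point: the Fermat quartic $C$ has \emph{no} degree-$2$ divisor class invariant under its full automorphism group $\mathrm{Aut}(C)\simeq\mumu_4^2\rtimes\mathfrak{S}_3$ --- in particular, no $\mathrm{Aut}(C)$-fixed even theta characteristic. This is exactly the content of Example~\ref{example:Fermat-96-64}: for $G=\mathrm{Aut}(C)$ one computes $\mathrm{Pic}^G(C)=\langle K_C,\delta\rangle\simeq\mathbb{Z}\times\mumu_2$ with $\delta$ two-torsion, so every $G$-invariant class has degree divisible by $4$. (Equivalently: such a class would give a $G$-equivariant embedding $C\hookrightarrow\mathbb{P}^3$, which is impossible because the normalizer in $\mathrm{PGL}_4(\mathbb{C})$ of the image of $\mumu_4^2\rtimes\mumu_3$ is the order-$192$ group $\mumu_2^3.\mathfrak{S}_4$ of Lemma~\ref{lemma:48-normalizer-PGL}, which contains no copy of $\mumu_4^2\rtimes\mathfrak{S}_3$.) Consequently $\mumu_4^2\rtimes\mathfrak{S}_3$ is never realized inside $\mathrm{Aut}(\mathbb{P}^3,C_6)$; it occurs only as $\mathrm{Aut}(X)$ for the threefold of Section~\ref{subsection:48-3}, where $\mathrm{Aut}(\mathbb{P}^3,C_6)\simeq\mumu_4^2\rtimes\mumu_3$ preserves a class $D$ with $2D\not\sim K_C$ and the missing index-$2$ piece is the involution swapping $E$ and $E^\prime$. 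Your framework, which places the entire target group inside $N=\mathrm{Aut}(\mathbb{P}^3,C_6)$, cannot produce this example, and this is not a cosmetic defect: it is the same misconception that would derail the ``only if'' analysis at the order-$96$ case.

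For the ``only if'' half your skeleton is broadly the paper's, but the step you yourself flag as delicate --- pinning down the isomorphism type of the $\mumu_2$-extension --- is resolved in the paper not by deciding whether the swap induces an inner automorphism, but by a structural dichotomy (Lemma~\ref{lemma:Aut-C2}, Corollaries~\ref{corollary:Aut-1} and \ref{corollary:Aut-2}): the action of $\mathrm{Aut}(X)$ on the net of $(1,1)$-divisors through $X$ gives a homomorphism $\eta\colon\mathrm{Aut}(X)\to\mathrm{Aut}(C)$ extending the natural embedding of $\mathrm{Aut}(\mathbb{P}^3,C_6)$, so either $\mathrm{Aut}(X)$ embeds into $\mathrm{Aut}(C)$ outright, or $2D\sim K_C$, the matrices $M_i$ may be taken symmetric, and $\mathrm{Aut}(X)\simeq\mathrm{Aut}(C,[D])\times\mumu_2$ is an honest direct product (Remark~\ref{remark:symmetric}). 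This disposes of $\mathrm{PGL}_2(\mathbb{F}_7)$ and of any order-$192$ extension in one stroke, and reduces the remaining work to computing $\mathrm{Pic}^G(C)$ via Proposition~\ref{proposition:Dolgachev} for the few groups ($\mumu_6$, $\mumu_9$, $\mumu_{12}$, $\mathrm{SL}_2(\mathbb{F}_3)$, $\mumu_4.\mathfrak{A}_4$) not already contained in the two targets, and checking that none of them preserves a degree-$2$ class satisfying \eqref{equation:very-ample}. Without some substitute for $\eta$, your plan for controlling the extension is a genuine gap rather than bookkeeping.
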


As we mentioned in Example~\ref{example:Edge},
the~family \textnumero 2.12 contains a unique smooth Fano threefold that admits a~faithful action of the~group $\mathrm{PSL}_2(\mathbb{F}_7)$.
Similarly, we prove in Section~\ref{section:Aut} that the~deformation family \textnumero 2.12 contains a unique smooth threefold that admits a~faithful action of the~group
$\mumu_4^2\rtimes\mumu_3$, and the full automorphism group of this threefold is $\mumu_4^2\rtimes\mathfrak{S}_3$.

\begin{remark}
\label{remark:G-Fano}
Let $G$ be a subgroup in $\mathrm{Aut}(X)$.
If $G$ has an~element that swaps the surfaces $E$ and $E^\prime$, then $X$ is a $G$-Mori fiber space (over a point), and $X$ is also known as a $G$-Fano threefold (see~\cite{Prokhorov2013}).
In this case, it is natural to ask the following three nested questions:
\begin{enumerate}
\item Is there a $G$-equivariant birational map $X\dasharrow \mathbb{P}^3$? Cf. \cite{CheltsovTschinkelZhang,CiurcaTanimotoTschinkel,KuznetsovProkhorov}.
\item Is $X$ $G$-solid? Cf. \cite{CheltsovSarikyan,Pinardin}.
\item Is $X$ $G$-birationally rigid? Cf. \cite{CheltsovShramov}.
\end{enumerate}
Inspired by \cite[Corollary 6.11]{KuznetsovProkhorov}, we conjecture that the answer to the first question is always negative.
If~$G\simeq \mathrm{PSL}_2(\mathbb{F}_7)\times\mumu_2$, then $X$ is $G$-birationally rigid \cite[Theorem 5.23]{Book}, so, in particular, it is $G$-solid.
We believe that $X$ is also $G$-birationally rigid if $G\simeq\mumu_4^2\rtimes\mathfrak{S}_3$.
\end{remark}

To consider more applications of Theorem A, let $\Bbbk$ be a~subfield in $\mathbb{C}$ such that $C_6$ is defined~over~$\Bbbk$.
Then $X$ and the~Sarkisov link \eqref{equation:diagram} are defined over~$\Bbbk$.
In~particular, the~curve $C_6^\prime$ is defined over~$\Bbbk$.
Moreover, it follows from \cite{BenoistWittenberg,LauterSerre} that $C_6^\prime$ and $C_6$ are isomorphic over~$\Bbbk$,
which can be shown directly.
By \cite[Corollary~4.14]{Zhuang}, Theorem A implies the~following corollaries.

\begin{corollary}
\label{corollary:4}
If $E\cap E^\prime$ does not have $\Bbbk$-points, then $X$ is K-stable.
\end{corollary}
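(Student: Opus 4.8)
The plan is to derive Corollary~\ref{corollary:4} from Theorem~A by a Galois-descent argument, using \cite[Corollary~4.14]{Zhuang} in exactly the same way it was used for Corollary~\ref{corollary:1}, but with the (semilinear) action of $\mathrm{Gal}(\overline{\Bbbk}/\Bbbk)$ in place of the action of $\mathrm{Aut}(X)$. I would argue by contradiction. Suppose $X$ is not K-stable. Since K-stability is insensitive to extension of the base field, $X_{\mathbb{C}}$ is not K-stable, so by the valuative criterion \cite{Fujita2019,Li} there is a prime divisor $\mathbf{F}$ over $X_{\mathbb{C}}$ (equivalently, over $X_{\overline{\Bbbk}}$) with $\beta(\mathbf{F})\leqslant 0$; that is, $X$ admits a destabilization.

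Next I would invoke \cite[Corollary~4.14]{Zhuang}, whose content is that an optimal destabilizing valuation is essentially unique and hence invariant under every symmetry of the situation. Applied over $\Bbbk$, this gives a destabilizing prime divisor $\mathbf{F}$ over $X_{\overline{\Bbbk}}$ whose center $Z$ is $\mathrm{Gal}(\overline{\Bbbk}/\Bbbk)$-invariant, so that $Z$ is defined over $\Bbbk$. Now Theorem~A applies to $\mathbf{F}$: since $\beta(\mathbf{F})\leqslant 0$, the center $Z$ is a single point of $E\cap E^\prime$. A zero-dimensional $\Bbbk$-subvariety of $X$ consisting of one geometric point is a $\Bbbk$-rational point of $X$; and since $X$ together with the diagram \eqref{equation:diagram} is defined over $\Bbbk$, the surfaces $E$ and $E^\prime$, hence also $E\cap E^\prime$, are defined over $\Bbbk$. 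Therefore $Z$ is a $\Bbbk$-point of $E\cap E^\prime$, contradicting the hypothesis. Hence $X$ is K-stable.

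The step that needs the most care is the invocation of \cite[Corollary~4.14]{Zhuang}: I must check it is being used in the precise form ``non-K-stability of $X/\Bbbk$ is witnessed by a $\mathrm{Gal}(\overline{\Bbbk}/\Bbbk)$-invariant prime divisor over $X_{\overline{\Bbbk}}$'', i.e.\ that the uniqueness of the optimal destabilization center really does force Galois-invariance of its center. The second delicate point is that Galois-invariance upgrades to $\Bbbk$-rationality \emph{only because Theorem~A forces the center to be a point}: a positive-dimensional Galois-invariant subvariety need not possess a $\Bbbk$-point, so zero-dimensionality is essential to the argument. Everything else is formal, since $X$, $E$, $E^\prime$ and the Sarkisov link are defined over $\Bbbk$ by construction, so no further descent issues arise.
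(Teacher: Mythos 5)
Your proposal is correct and follows exactly the route the paper intends: the paper derives Corollary~\ref{corollary:4} directly from Theorem~A via \cite[Corollary~4.14]{Zhuang}, whose role is precisely to produce a destabilizing center invariant under the Galois action, after which Theorem~A forces that center to be a single geometric point of $E\cap E^\prime$, hence a $\Bbbk$-point since $\Bbbk$ is perfect. You have merely made explicit the descent details that the paper leaves implicit, including the correct observation that zero-dimensionality of the center is what upgrades Galois-invariance to $\Bbbk$-rationality.
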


\begin{corollary}
\label{corollary:5}
If $C_6$  does not have $\Bbbk$-points, then $X$ is K-stable.
\end{corollary}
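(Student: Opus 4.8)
The plan is to deduce Corollary~\ref{corollary:5} from Corollary~\ref{corollary:4}, in exact parallel with the way Corollary~\ref{corollary:2} is obtained from Corollary~\ref{corollary:1}. As already observed in the text, since $C_6$ is defined over $\Bbbk$, the blow-up $\pi\colon X\to\mathbb{P}^3$, the exceptional surfaces $E$ and $E^\prime$, and hence their intersection $E\cap E^\prime$ are all defined over $\Bbbk$. The only geometric input I would need is the tautology $\pi(E\cap E^\prime)\subseteq\pi(E)=C_6$: indeed $E$ is the exceptional divisor of the blow-up of the curve $C_6$, so $\pi(E)=C_6$, and $E\cap E^\prime\subseteq E$.

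First I would record the elementary functoriality fact that a morphism of $\Bbbk$-varieties carries $\Bbbk$-points to $\Bbbk$-points, and apply it to the restriction $\pi|_{E\cap E^\prime}\colon E\cap E^\prime\to C_6$. Thus, assuming that $C_6$ has no $\Bbbk$-points, if $z$ were a $\Bbbk$-point of $E\cap E^\prime$, then $\pi(z)$ would be a $\Bbbk$-point of $C_6$, a contradiction; hence $E\cap E^\prime$ has no $\Bbbk$-points. It then remains only to invoke Corollary~\ref{corollary:4} to conclude that $X$ is K-stable. (Alternatively, one could run the same argument with $\pi^\prime$ and $C_6^\prime$ in place of $\pi$ and $C_6$, using the fact recalled above that $C_6^\prime$ and $C_6$ are isomorphic over $\Bbbk$; this yields the same conclusion.)

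I do not expect any genuine obstacle here: all the substance of the statement is already contained in Theorem~A and in the application of \cite[Corollary~4.14]{Zhuang} used to prove Corollary~\ref{corollary:4}. The only matters requiring a moment of care are the bookkeeping ones — namely that $\pi$, $E$, $E^\prime$, and $E\cap E^\prime$ are defined over $\Bbbk$, so that talking about their $\Bbbk$-points is legitimate and behaves well under the morphism $\pi$, and that the image of $E\cap E^\prime$ under $\pi$ indeed lands inside the curve $C_6$.
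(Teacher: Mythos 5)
Your argument is correct and matches the route the paper intends: Corollary~\ref{corollary:5} is presented there as an immediate consequence of Theorem~A via \cite[Corollary~4.14]{Zhuang}, and the reduction to Corollary~\ref{corollary:4} through the observation that $\pi$ maps $E\cap E^\prime$ into $C_6$ over $\Bbbk$ is exactly the implicit step. Nothing is missing.
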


Using \cite[Corollary~4.14]{Zhuang}, we also obtain

\begin{corollary}
\label{corollary:7}
Every smooth Fano threefold in the~deformation family \textnumero 2.12 which is defined over a~subfield of the~field $\mathbb{C}$
and does not have points in this subfield is K-stable.
\end{corollary}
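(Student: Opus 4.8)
The final statement is Corollary~\ref{corollary:7}, which upgrades Corollary~\ref{corollary:5} by removing the hypothesis that the threefold arises from the specific construction via a quartic curve and a degree-2 divisor. The point is that \emph{every} smooth member of family \textnumero 2.12 arises this way, so the plan is essentially to reduce to Corollary~\ref{corollary:5} after descending the construction to the ground field.

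\medskip

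\noindent\emph{Proof proposal.} Let $X$ be a smooth Fano threefold in the deformation family \textnumero 2.12 defined over a subfield $\Bbbk\subset\mathbb{C}$ with $X(\Bbbk)=\varnothing$. First I would recall from the introduction that over $\mathbb{C}$ there is a birational morphism $\pi\colon X\to\mathbb{P}^3$ contracting a surface $E$ to a smooth sextic curve $C_6$ of genus three, and that the class of $E$ — being the unique exceptional divisor of the unique extremal contraction of $X$ of that numerical type — is Galois-invariant; hence $\pi$ and $C_6$ are defined over $\Bbbk$. (Concretely, $-K_X$ is defined over $\Bbbk$, and among the two ``small'' contractions given by the two $K$-negative extremal rays, the one contracting a divisor to a curve is intrinsically characterised, so it descends.) Then $C_6\subset\mathbb{P}^3_{\Bbbk}$ is a smooth sextic of genus three over $\Bbbk$, and $X$ is its blow-up over $\Bbbk$, exactly as in the setup of Corollary~\ref{corollary:5}.

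\medskip

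The second step is to observe that $X(\Bbbk)=\varnothing$ forces $C_6(\Bbbk)=\varnothing$: any $\Bbbk$-point of $C_6$ would lift to a $\Bbbk$-point of $X$ on the exceptional surface $E$ (indeed, a $\Bbbk$-point $P\in C_6$ has a $\Bbbk$-rational tangent direction, or more simply the fibre $\pi^{-1}(P)\cong\mathbb{P}^1_{\Bbbk}$ has $\Bbbk$-points since $\mathbb{P}^1_{\Bbbk}$ always does), contradicting $X(\Bbbk)=\varnothing$. Conversely this also shows the hypothesis ``$X$ has no $\Bbbk$-points'' is genuinely a hypothesis on $C_6$. Now apply Corollary~\ref{corollary:5}: since $C_6$ is defined over $\Bbbk$ and has no $\Bbbk$-points, $X$ is K-stable. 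Alternatively, one can invoke Corollary~\ref{corollary:4} directly, noting that $E\cap E'$ is defined over $\Bbbk$ and, being a closed subscheme of $X$, has no $\Bbbk$-points because $X$ has none; then \cite[Corollary~4.14]{Zhuang} applied to Theorem~A concludes. Either route gives the claim.

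\medskip

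\noindent\emph{Main obstacle.} The only real content beyond Corollary~\ref{corollary:5} is the descent of the construction: one must argue that every smooth Fano threefold in family \textnumero 2.12 over $\Bbbk$ is, over $\Bbbk$, the blow-up of $\mathbb{P}^3$ along a genus-three sextic — equivalently, that the $\pi$-exceptional divisor $E$ (and hence $\pi$ and $C_6$) is Galois-stable. This follows because the contraction $\pi$ is distinguished among the two extremal contractions of $X$ by the numerical type of the contracted locus (a divisor collapsing to a curve), a property preserved by the Galois action; so $E$ spans a $\mathrm{Gal}(\overline{\Bbbk}/\Bbbk)$-invariant subspace and descends by Galois descent for the $\mathbb{P}^1$-bundle-free morphism, making $\pi$ and its image $C_6$ defined over $\Bbbk$. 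After that, the corollary is immediate from Corollary~\ref{corollary:5} (or Corollary~\ref{corollary:4}), so I expect no further difficulty.
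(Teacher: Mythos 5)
There is a genuine gap in your main route, and it sits exactly where you locate the ``main obstacle.'' You claim that the contraction $\pi$ is distinguished among the two extremal contractions of $X$ by the numerical type of its contracted locus, so that $E$ is Galois-invariant and $\pi$, $C_6$ descend to $\Bbbk$. This is false: \emph{both} extremal contractions $\pi$ and $\pi^\prime$ are divisorial contractions of a surface to a smooth sextic of genus three in $\mathbb{P}^3$, with identical numerical invariants, so nothing intrinsic separates them and the Galois group may swap the two rays. The paper's own construction in Section~\ref{subsection:pointless} shows this actually happens: the real pointless threefolds built from the Weil restriction $V$ satisfy $\mathrm{Pic}_{\mathbb{R}}(X)=\mathbb{Z}[-K_X]$, so complex conjugation interchanges $E$ and $E^\prime$, neither $\pi$ nor $C_6$ is defined over $\mathbb{R}$, and Corollary~\ref{corollary:5} simply does not apply. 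Those examples are the intended application of Corollary~\ref{corollary:7}; a proof that reduces it to Corollary~\ref{corollary:5} cannot work, which is precisely why the paper states the two corollaries separately.

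The correct argument is essentially the one you relegate to ``alternatively,'' but it should not be routed through Corollary~\ref{corollary:4} (whose hypotheses also presuppose that $C_6$ is defined over $\Bbbk$). Rather, argue directly: the \emph{pair} $\{E,E^\prime\}$ is the set of exceptional divisors of the two extremal contractions of $X_{\mathbb{C}}$, hence is permuted by the Galois action, so the intersection $E\cap E^\prime$ is Galois-stable even when $E$ and $E^\prime$ individually are not. By Theorem~A, any prime divisor $\mathbf{F}$ over $X_{\mathbb{C}}$ with $\beta(\mathbf{F})\leqslant 0$ has center a point of $E\cap E^\prime$; by \cite[Corollary~4.14]{Zhuang} applied to the Galois action, it suffices to rule out invariant such centers, and a Galois-invariant point of $E\cap E^\prime$ would be a $\Bbbk$-point of $X$, which is excluded by hypothesis. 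Your observation that a $\Bbbk$-point of $C_6$ would force a $\Bbbk$-point of $X$ via the fibre $\pi^{-1}(P)\simeq\mathbb{P}^1_{\Bbbk}$ is correct but goes in the wrong direction for this corollary; it is not needed once the argument is run on $E\cap E^\prime$ directly.
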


We will present applications of Corollaries~\ref{corollary:5} and Corollary~\ref{corollary:7} in Section~\ref{section:examples}.

\medskip
\noindent
\textbf{Acknowledgements.}
We started this project in Auckland (New Zealand) back in December 2022
during Ivan Cheltsov and Antoine Pinardin's visit, which has been supported by the~Leverhulme Trust grant RPG-2021-229. Oliver Li is supported by the~ Australian Commonwealth Government.

We would like to thank Harry Braden, Dougal Davis, Linden Disney-Hogg, Tim Dokchitser, Igor Dolgachev, Sasha Kuznetsov, Yuri Prokhorov, Giorgio Ottaviani, Costya Shramov and Yuyang Zhou for their help in this project.

\section{Examples}
\label{section:examples}

\subsection{$\mathfrak{S}_4$-invariant curves}
\label{subsection:S4}

Let us use notations introduced in Section~\ref{section:intro}.
Suppose, in addition, that
$$
M_1=\begin{pmatrix}0&a&1&0\\a&0&0&-1\\1&0&0&a\\0&-1&a&0\end{pmatrix},
M_2=\begin{pmatrix}0&1&0&a\\1&0&a&0\\0&a&0&-1\\a&0&-1&0\end{pmatrix},
M_2=\begin{pmatrix}0&0&a&-1\\0&0&1&a\\a&1&0&0\\-1&a&0&0\end{pmatrix},
$$
where $a\in\mathbb{C}$ such that $a(a^6-1)\ne 0$.
Then $X$ is a~smooth Fano threefold in the~family \textnumero 2.12, and
$$
M=\begin{pmatrix}
ax_1+x_2&ax_0-x_3&ax_3+x_0&ax_2-x_1\\
ax_3+x_1&ax_2+x_0&ax_1-x_3&ax_0-x_2\\
ax_2-x_3&ax_3+x_2&ax_0+x_1&ax_1-x_0
\end{pmatrix},
$$
so that $C_6=\{f_0=0,f_1=0,f_2=0,f_3=0\}$ for
\begin{multline*}
f_0=(1-a^3)x_0^3-(2a^2+2a)x_0^2x_1+(2a^2+2a)x_0^2x_2+\\
+(2a^2+2a)x_0^2x_3+(a^3-1)x_0x_1^2-(2a^2-2a)x_0x_1x_2-(2a^2-2a)x_0x_1x_3+\\
+(a^3-1)x_0x_2^2+(2a^2- 2a)x_0x_2x_3+(a^3-1)x_0x_3^2-(2a^3+2)x_1x_2x_3,
\end{multline*}
\begin{multline*}
f_1=(1-a^3)x_0^2x_1+(-2a^2-2a)x_0x_1^2+(2a^2-2a)x_0x_1x_2-\\
-(2a^2-2a)x_0x_1x_3+(2a^3+2)x_0x_2x_3+(a^3-1)x_1^3+(2a^2+2a)x_1^2x_2-\\
-(2a^2+2a)x_1^2x_3+(-a^3+ 1)x_1x_2^2+(2a^2-2a)x_1x_2x_3+(1-a^3)x_1x_3^2,
\end{multline*}
\begin{multline*}
f_2=(a^3-1)x_0^2x_2-(2a^2-2a)x_0x_1x_2-(2a^3+2)x_0x_1x_3+\\
+(-2a^2-2a)x_0x_2^2-(2a^2-2a)x_0x_2x_3+(a^3-1)x_1^2x_2+(2a^2+2a)x_1x_2^2+\\
+(2a^2-2a)x_1x_2x_3+ (1-a^3)x_2^3+(2a^2+2a)x_2^2x_3+(a^3-1)x_2x_3^2,
\end{multline*}
\begin{multline*}
f_3=(1-a^3)x_0^2x_3+(2a^3+2)x_0x_1x_2-(2a^2-2a)x_0x_1x_3-\\
-(2a^2-2a)x_0x_2x_3+(2a^2+2a)x_0x_3^2+(1-a^3)x_1^2x_3-(2a^2-2a)x_1x_2x_3+\\
+(2a^2+2a)x_1x_3^2+ (1-a^3)x_2^2x_3+(2a^2+2a)x_2x_3^2+(a^3-1)x_3^3.
\end{multline*}

It follows from \cite{Ottaviani} that the~curve $C_6$ is isomorphic to the~plane quartic curve in $\mathbb{P}^2_{x,y,z}$
that is given by the~equation $\det(xM_1+yM_2+zM_3)=0$, which can be rewritten as
$$
x^4+y^4+z^4+\lambda(x^2y^2+x^2z^2+y^2z^2)=0
$$
for $\lambda=-\frac{2a^4+2}{(a^2+1)^2}$, cf. \cite[\S~14]{Edge38}.
So, it follows from \cite{DolgachevBook} that $\mathrm{Aut}(C_6)\simeq\mathfrak{S}_4$ if $\lambda\ne 0$ and $\lambda^2+3\lambda+18\ne 0$.
Moreover, if $\lambda=0$, then $\mathrm{Aut}(C_6)\simeq\mumu_4^2\rtimes\mathfrak{S}_3$.
Furthermore, if $\lambda^2+3\lambda+18=0$, then $C_6$ is isomorphic to the~Klein quartic curve,
and $\mathrm{Aut}(C_6)\simeq\mathrm{PSL}_2(\mathbb{F}_7)$.

\begin{lemma}
\label{lemma:S4}
The group $\mathrm{Aut}(\mathbb{P}^3,C_6)$ contains a~subgroup isomorphic to $\mathfrak{S}_4$.
\end{lemma}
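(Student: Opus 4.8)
The plan is to obtain the required $\mathfrak{S}_4$ by lifting to the net of matrices $\langle M_1,M_2,M_3\rangle$ the natural $\mathfrak{S}_4$-action on the plane quartic $Q=\{\det(xM_1+yM_2+zM_3)=0\}\cong C_6$. The key observation is that $M_1,M_2,M_3$ are symmetric, so $AM_iA^{T}$ is symmetric for every $A\in\mathrm{GL}_4(\mathbb{C})$. Suppose $A$ is such that $AM_iA^{T}\in\langle M_1,M_2,M_3\rangle$ for $i=1,2,3$, say $AM_iA^{T}=\sum_j g_{ij}M_j$ with $(g_{ij})$ invertible. Then $(x,y)\mapsto(xA,yA)$ carries the system $\{xM_iy^{T}=0\}$ to $\{x(\sum_j g_{ij}M_j)y^{T}=0\}$, hence preserves $X\subset\mathbb{P}^3\times\mathbb{P}^3$ and commutes with $\pi$; since $\pi$ is the blow-up of $\mathbb{P}^3$ along $C_6$, the induced automorphism $x\mapsto xA$ of $\mathbb{P}^3$ maps $C_6$ to itself, i.e.\ lies in $\mathrm{Aut}(\mathbb{P}^3,C_6)$. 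Moreover $A(xM_1+yM_2+zM_3)A^{T}=\sum_j\big(\sum_i g_{ij}t_i\big)M_j$ with $t=(x,y,z)$, so this element of $\mathrm{Aut}(\mathbb{P}^3,C_6)$ acts on $C_6\cong Q$ through the automorphism $t\mapsto t\,(g_{ij})$ of $Q$.

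By the discussion preceding the lemma, $\mathrm{Aut}(C_6)=\mathrm{Aut}(Q)$ contains a copy of $\mathfrak{S}_4$, realized by invertible linear substitutions of the net $\langle M_1,M_2,M_3\rangle$ (one may take it generated by the cyclic permutation $M_1\mapsto M_2\mapsto M_3\mapsto M_1$ together with suitable sign changes, so that $\mathfrak{S}_4\cong\mumu_2^2\rtimes\mathfrak{S}_3$). It thus suffices to realize a generating set of this $\mathfrak{S}_4$ by matrices $A$ as in the previous paragraph. For the cyclic generator and for one further generator, one checks directly that appropriate signed permutation matrices $A$ of the coordinates $x_0,x_1,x_2,x_3$ satisfy $AM_iA^{T}=\sum_j g_{ij}M_j$ with $(g_{ij})$ the prescribed substitution: these matrices are forced by the combinatorics of $M_1,M_2,M_3$, whose "$a$-parts" are supported on the three partitions of $\{0,1,2,3\}$ into two pairs --- which $\mathfrak{S}_4$ permutes via the surjection $\mathfrak{S}_4\twoheadrightarrow\mathfrak{S}_3$ --- while a diagonal $\pm1$ twist corrects the signs on the remaining entries.

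The resulting automorphisms of $\mathbb{P}^3$ all lie in $\mathrm{Aut}(\mathbb{P}^3,C_6)$ and act on $C_6\cong Q$ through the chosen $\mathfrak{S}_4\subset\mathrm{Aut}(Q)$; since $\mathrm{Aut}(\mathbb{P}^3,C_6)$ acts faithfully on $C_6$, the subgroup they generate is isomorphic to its image, namely $\mathfrak{S}_4$, which is the assertion of the lemma. The one point that is not purely formal is the explicit choice of the signed permutation matrices together with the verification of the identities $AM_iA^{T}=\sum_j g_{ij}M_j$ (equivalently, that the chosen $A$ preserve the ideal $(f_0,f_1,f_2,f_3)$); this is a short but somewhat delicate sign computation, and is where essentially all of the work lies.
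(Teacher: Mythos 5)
There is a genuine gap. Your general framework --- lifting automorphisms of the net $\langle M_1,M_2,M_3\rangle$ under the congruence action $M\mapsto AMA^{T}$ to elements of $\mathrm{Aut}(\mathbb{P}^3,C_6)$ --- is sound and is a legitimate, more conceptual alternative to the paper's proof (which simply exhibits four explicit generators and verifies $G\simeq\mathfrak{S}_4$ and the invariance of $C_6$ by Magma). But your key computational claim, that the generators can all be taken to be signed permutation matrices in the coordinates $x_0,\dots,x_3$, is false, and the subgroup you would actually obtain is $\mathfrak{A}_4$, not $\mathfrak{S}_4$. Here is the obstruction. Each $M_i$ has its $a$-entries supported on one of the three pair-partitions of $\{0,1,2,3\}$, its $\pm1$-entries on a second, and vanishes identically on the third; the three resulting labellings $i\mapsto(\text{$a$-partition})$, $i\mapsto(\text{$1$-partition})$, $i\mapsto(\text{zero partition})$ are three bijections $\{1,2,3\}\to\{\text{partitions}\}$ differing from one another by a fixed $3$-cycle. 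A monomial matrix $A$ permutes the partitions by some $\sigma\in\mathfrak{S}_3$ and, since the only elements of the net vanishing on an entire partition are the multiples of the $M_i$, it must send each $M_i$ to a multiple of some $M_{\tau(i)}$ with $\sigma$ intertwining both the zero-supports and the $a$-supports. This forces $\tau$ to commute with the $3$-cycle relating the two labellings, i.e.\ $\tau$ must be even. (Concretely: one checks that for $\tau=(12)$ and any monomial $A$ the resulting constraints on the products $\lambda_n\lambda_m$ of the diagonal entries force $a^4=1$, which fails for generic $a$.) So no signed permutation --- indeed no monomial matrix --- can realize a transposition of $\{M_1,M_2,M_3\}$.

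What monomial matrices do give you is exactly the preimage $\mathfrak{A}_4$ of $\mathfrak{A}_3$: the plain coordinate permutation $(x_0x_1x_2)$ realizes $M_1\mapsto M_2\mapsto M_3\mapsto M_1$ on the nose, and suitable signed double transpositions realize the Klein four-group (these are, up to the scalar $i$, the first two generators in the paper's proof). Note also that the generating set you describe, ``the cyclic permutation together with sign changes,'' generates $\mumu_2^2\rtimes\mumu_3\simeq\mathfrak{A}_4$ in any case, so it could not yield $\mathfrak{S}_4$ even if the lifts existed in the form you propose. The missing odd elements are necessarily represented by non-monomial matrices; compare the paper's third and fourth generators, which are dense symmetric matrices with entries in $\{\pm1,\pm3\}$. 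To complete your argument you would have to produce such a matrix realizing a transposition of the net (or cite the general fact that every automorphism of $(Q,\theta)$ for the $G$-invariant even theta characteristic $\theta$ lifts to a congruence of the symmetric determinantal representation) and then verify the congruence identity for it --- which is precisely the non-monomial computation your proposal does not supply.
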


\begin{proof}
Let $G$ be the~subgroup in $\mathrm{PGL}_4(\mathbb{C})$ that is generated by the~following transformations:
$$
\begin{pmatrix}0&0&0&i\\0&0&i&0\\0&-i&0&0\\-i&0&0&0\end{pmatrix},
\begin{pmatrix}0&0&i&0\\0&0&0&-i\\-i&0&0&0\\0&i&0&0\end{pmatrix},
\begin{pmatrix}1&-3&-1&1\\-3&-1&1&1\\-1&1&-3&1\\1&1&1&3\end{pmatrix},
\begin{pmatrix}-3&-1&1&1\\-1&1&-3&1\\1&-3&-1&1\\1&1&1&3\end{pmatrix}.
$$
Then, using Magma, one can check that $G\simeq\mathfrak{S}_4$.
Moreover, the~curve $C_6$ is $G$-invariant.
\end{proof}

\begin{corollary}
\label{corollary:S4}
If $\lambda\ne 0$ and $\lambda^2+3\lambda+18\ne 0$, then $\mathrm{Aut}(\mathbb{P}^3,C_6)\simeq\mathfrak{S}_4$.
\end{corollary}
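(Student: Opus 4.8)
The strategy is to sandwich $\mathrm{Aut}(\mathbb{P}^3,C_6)$ between $\mathfrak{S}_4$ and $\mathrm{Aut}(C_6)$. By Lemma~\ref{lemma:S4} we already have an inclusion $\mathfrak{S}_4\hookrightarrow\mathrm{Aut}(\mathbb{P}^3,C_6)$, so it suffices to show that the reverse containment holds, i.e.\ that $\mathrm{Aut}(\mathbb{P}^3,C_6)$ has order at most $24$. For this I would invoke the isomorphism $\mathrm{Aut}(\mathbb{P}^3,C_6)\simeq\mathrm{Aut}(C,[D])$ recorded in Section~\ref{section:intro} (equivalently, the fact that $\mathrm{Aut}(\mathbb{P}^3,C_6)$ acts faithfully on $C_6$, which was already used in the proof of Corollary~\ref{corollary:3}). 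This realises $\mathrm{Aut}(\mathbb{P}^3,C_6)$ as a subgroup of $\mathrm{Aut}(C_6)$.

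Next I would use the identification of $C_6$ with the plane quartic $x^4+y^4+z^4+\lambda(x^2y^2+x^2z^2+y^2z^2)=0$ and the classification of automorphism groups of plane quartics (via \cite{DolgachevBook}, as cited in the paragraph preceding Lemma~\ref{lemma:S4}): under the stated hypotheses $\lambda\ne 0$ and $\lambda^2+3\lambda+18\ne 0$, one has $\mathrm{Aut}(C_6)\simeq\mathfrak{S}_4$. Combining this with the inclusion $\mathrm{Aut}(\mathbb{P}^3,C_6)\hookrightarrow\mathrm{Aut}(C_6)\simeq\mathfrak{S}_4$ from the previous paragraph, and with the inclusion $\mathfrak{S}_4\hookrightarrow\mathrm{Aut}(\mathbb{P}^3,C_6)$ from Lemma~\ref{lemma:S4}, we get $\mathfrak{S}_4\hookrightarrow\mathrm{Aut}(\mathbb{P}^3,C_6)\hookrightarrow\mathfrak{S}_4$, forcing equality $\mathrm{Aut}(\mathbb{P}^3,C_6)\simeq\mathfrak{S}_4$.

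The only genuinely delicate point is the faithfulness of the action of $\mathrm{Aut}(\mathbb{P}^3,C_6)$ on $C_6$, i.e.\ that no nontrivial projective transformation of $\mathbb{P}^3$ fixes $C_6$ pointwise; but since $C_6$ spans $\mathbb{P}^3$ (it is a non-degenerate curve, being embedded by $|K_C+D|$), an element of $\mathrm{PGL}_4(\mathbb{C})$ fixing $C_6$ pointwise fixes $\mathbb{P}^3$ pointwise and is hence trivial. This is already implicit in the introduction, so in the write-up I would simply cite the isomorphism $\mathrm{Aut}(\mathbb{P}^3,C_6)\simeq\mathrm{Aut}(C,[D])$ and the classification result for plane quartics, and the proof reduces to the two-line sandwich argument above. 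No nontrivial computation is required beyond what Lemma~\ref{lemma:S4} and the cited references already provide.
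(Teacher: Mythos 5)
Your proposal is correct and is exactly the argument the paper intends: the paper gives no explicit proof of Corollary~\ref{corollary:S4}, but it follows immediately from the sandwich $\mathfrak{S}_4\hookrightarrow\mathrm{Aut}(\mathbb{P}^3,C_6)\hookrightarrow\mathrm{Aut}(C_6)\simeq\mathfrak{S}_4$, using Lemma~\ref{lemma:S4}, the faithfulness of the $\mathrm{Aut}(\mathbb{P}^3,C_6)$-action on the non-degenerate curve $C_6$, and the identification of $\mathrm{Aut}(C_6)$ via the plane quartic model in the preceding paragraph. Your attention to the faithfulness point is a welcome bit of extra care, but nothing in your write-up diverges from the paper's route.
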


Similarly, we prove

\begin{lemma}
\label{lemma:S4-threefold}
The group $\mathrm{Aut}(X)$ contains a~subgroup isomorphic to $\mathfrak{S}_4\times\mumu_2$.
\end{lemma}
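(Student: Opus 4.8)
The plan is to exhibit the required subgroup explicitly, combining the $\mathfrak{S}_4$ produced in Lemma~\ref{lemma:S4} with the involution that interchanges the two copies of $\mathbb{P}^3$. First I would record that the three matrices $M_1$, $M_2$, $M_3$ written above are all symmetric. Consequently the involution
$$
\tau\colon\big([x_0:x_1:x_2:x_3],[y_0:y_1:y_2:y_3]\big)\longmapsto\big([y_0:y_1:y_2:y_3],[x_0:x_1:x_2:x_3]\big)
$$
preserves each of the three equations in \eqref{equation:P3-P3}, hence $\tau\in\mathrm{Aut}(X)$; it swaps the morphisms $\pi$ and $\pi^\prime$ in \eqref{equation:diagram}, and therefore swaps the surfaces $E$ and $E^\prime$. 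In particular $\tau$ maps to the generator of $\mumu_2$ in the exact sequence $1\to\mathrm{Aut}(\mathbb{P}^3,C_6)\to\mathrm{Aut}(X)\to\mumu_2$, so that $\tau\notin\mathrm{Aut}(\mathbb{P}^3,C_6)$. (Symmetry of the $M_k$ also makes $C_6^\prime$ be cut out by the same polynomials as $C_6$ with each $x_i$ replaced by $y_i$.)

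Next I would lift the subgroup $G\simeq\mathfrak{S}_4$ from Lemma~\ref{lemma:S4} to $\mathrm{Aut}(X)$ by letting it act diagonally: for a generator $g\in\mathrm{PGL}_4(\mathbb{C})$ of $G$ consider the transformation $([x],[y])\mapsto(g[x],g[y])$ of $\mathbb{P}^3\times\mathbb{P}^3$. Such a transformation preserves $X$ exactly when $g^{T}M_kg\in\langle M_1,M_2,M_3\rangle_{\mathbb{C}}$ for $k=1,2,3$, and — just as in the proof of Lemma~\ref{lemma:S4} — this can be checked with Magma for the four explicit generators listed there. Since the projection $\pi$ intertwines the diagonal automorphism attached to $g$ with the action of $g$ on $\mathbb{P}^3$, and these four $g$'s generate a copy of $\mathfrak{S}_4$ in $\mathrm{PGL}_4(\mathbb{C})$, the diagonal lifts generate a subgroup of $\mathrm{Aut}(X)$ isomorphic to $\mathfrak{S}_4$ and contained in $\mathrm{Aut}(\mathbb{P}^3,C_6)$.

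Finally, each diagonal automorphism $([x],[y])\mapsto(g[x],g[y])$ manifestly commutes with the swap $\tau$, and $\tau$ does not lie in the diagonal $\mathfrak{S}_4$ because the latter sits inside $\mathrm{Aut}(\mathbb{P}^3,C_6)$ whereas $\tau$ does not. Hence the subgroup of $\mathrm{Aut}(X)$ generated by the diagonal $\mathfrak{S}_4$ together with $\tau$ has order $48$ and is isomorphic to $\mathfrak{S}_4\times\mumu_2$, which is the assertion. The only step that requires genuine verification — and the one potential obstacle — is that the $\mathfrak{S}_4$ of Lemma~\ref{lemma:S4} lifts \emph{diagonally}, i.e.\ that it acts in the same way on both $\mathbb{P}^3$-factors; concretely this is the containment $g^{T}M_kg\in\langle M_1,M_2,M_3\rangle_{\mathbb{C}}$ for each generator $g$, a finite linear-algebra computation that I would delegate to Magma exactly as in the proof of Lemma~\ref{lemma:S4}.
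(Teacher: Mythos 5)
Your argument is correct and follows essentially the same route as the paper: lift the $\mathfrak{S}_4$ of Lemma~\ref{lemma:S4} diagonally to $\mathbb{P}^3\times\mathbb{P}^3$ (verifying invariance of $X$ by the same kind of Magma computation), and use the symmetry of $M_1$, $M_2$, $M_3$ to get the commuting swap involution $\tau\notin\mathrm{Aut}(\mathbb{P}^3,C_6)$. You are in fact more explicit than the paper about the one point that needs checking, namely that the lift really is diagonal, i.e.\ that each generator $g$ satisfies the containment $gM_kg^{T}\in\langle M_1,M_2,M_3\rangle_{\mathbb{C}}$ (watch the transpose convention for the bilinear forms $xM_ky^{T}$).
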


\begin{proof}
Let $G$ be the~subgroup in $\mathrm{PGL}_4(\mathbb{C})$ that is defined in the~proof of Lemma~\ref{lemma:S4}.
Then $G\simeq\mathfrak{S}_4$, the~group $G$ acts diagonally on $\mathbb{P}^3\times\mathbb{P}^3$, and $X$ is $G$-invariant.
This gives an embedding $\mathfrak{S}_4\hookrightarrow\mathrm{Aut}(X)$.
Moreover, since the~matrices $M_1$, $M_2$, $M_3$ are symmetric, the~involution
$$
\big([x_0:x_1:x_2:x_3],[y_0:y_1:y_2:y_3]\big)\mapsto\big([y_0:y_1:y_2:y_3],[x_0:x_1:x_2:x_3]\big)
$$
leaves $X$ invariant and commutes with the~$\mathfrak{S}_4$-action, which implies the~required assertion.
\end{proof}

\begin{corollary}
\label{corollary:S4-threefold}
If $\lambda\ne 0$ and $\lambda^2+3\lambda+18\ne 0$, then $\mathrm{Aut}(X)\simeq\mathfrak{S}_4\times\mumu_2$.
\end{corollary}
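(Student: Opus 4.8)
The plan is to combine Lemma~\ref{lemma:S4-threefold} with the computation of $\mathrm{Aut}(C_6)$ recorded above together with the exact sequence
$$
1\rightarrow \mathrm{Aut}\big(\mathbb{P}^3,C_6\big) \rightarrow \mathrm{Aut}(X) \rightarrow \mumu_2
$$
from the introduction. By Lemma~\ref{lemma:S4-threefold} there is an embedding $\mathfrak{S}_4\times\mumu_2\hookrightarrow\mathrm{Aut}(X)$, so it suffices to prove the reverse inequality $|\mathrm{Aut}(X)|\leqslant 48$ under the hypotheses $\lambda\ne 0$ and $\lambda^2+3\lambda+18\ne 0$. From the exact sequence this reduces to showing $\mathrm{Aut}(\mathbb{P}^3,C_6)\simeq\mathfrak{S}_4$, which is exactly Corollary~\ref{corollary:S4}: under these two hypotheses the plane quartic $C$ associated to $C_6$ (via $\det(xM_1+yM_2+zM_3)=0$) is neither the Fermat-type degenerate quartic nor the Klein quartic, so by the classical classification of automorphism groups of smooth plane quartics \cite{DolgachevBook} one has $\mathrm{Aut}(C_6)\simeq\mathrm{Aut}(C,[D])\simeq\mathrm{Aut}(\mathbb{P}^3,C_6)\simeq\mathfrak{S}_4$.

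The remaining step is to see that the homomorphism $\mathrm{Aut}(X)\rightarrow\mumu_2$ is surjective and that the resulting extension splits as a direct product. Surjectivity is immediate: the matrices $M_1$, $M_2$, $M_3$ are symmetric, so — as already noted in the proof of Lemma~\ref{lemma:S4-threefold} — the involution swapping the two $\mathbb{P}^3$-factors preserves $X$ and maps to the generator of $\mumu_2$. Thus $|\mathrm{Aut}(X)|=2\,|\mathrm{Aut}(\mathbb{P}^3,C_6)|=48$. For the product structure, one observes that this swap involution commutes with the diagonal $\mathfrak{S}_4$-action (again because it is diagonal), so $\mathrm{Aut}(X)$ is generated by a normal copy of $\mathfrak{S}_4$ and a central involution intersecting it trivially, i.e. $\mathrm{Aut}(X)\simeq\mathfrak{S}_4\times\mumu_2$.

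The only genuine content here is the identification $\mathrm{Aut}(\mathbb{P}^3,C_6)\simeq\mathfrak{S}_4$, i.e. Corollary~\ref{corollary:S4}, and I expect the main obstacle to be verifying carefully that nothing larger than $\mathfrak{S}_4$ acts: one must use that $\mathrm{Aut}(\mathbb{P}^3,C_6)\simeq\mathrm{Aut}(C,[D])$ is a subgroup of $\mathrm{Aut}(C)$, that for a general smooth plane quartic in the pencil $x^4+y^4+z^4+\lambda(x^2y^2+x^2z^2+y^2z^2)=0$ the full automorphism group is exactly $\mathfrak{S}_4$ when $\lambda\ne 0$ and $\lambda^2+3\lambda+18\ne 0$ (the two excluded values being precisely where the group jumps to $\mumu_4^2\rtimes\mathfrak{S}_3$ or $\mathrm{PSL}_2(\mathbb{F}_7)$), and that the divisor class $[D]$ is preserved by this whole group. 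Granting Corollary~\ref{corollary:S4}, the rest is the short bookkeeping with the exact sequence sketched above.
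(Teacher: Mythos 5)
Your proposal is correct and follows exactly the route the paper intends: the embedding $\mathfrak{S}_4\times\mumu_2\hookrightarrow\mathrm{Aut}(X)$ from Lemma~\ref{lemma:S4-threefold}, the upper bound $|\mathrm{Aut}(X)|\leqslant 2\,|\mathrm{Aut}(\mathbb{P}^3,C_6)|=48$ from Corollary~\ref{corollary:S4} together with the exact sequence $1\to\mathrm{Aut}(\mathbb{P}^3,C_6)\to\mathrm{Aut}(X)\to\mumu_2$, and the comparison of orders. The paper leaves this proof implicit, but your bookkeeping (including the splitting via the central swap involution) matches its argument.
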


Applying Corollary~\ref{corollary:3}, we conclude that the~Fano threefold $X$ is K-stable.

\subsection{$\mumu_4^2\rtimes\mumu_3$-invariant curve}
\label{subsection:48-3}

Let $\widehat{G}$ be the~subgroup in $\mathrm{GL}_4(\mathbb{C})$ generated by the~matrices
$$
M=\begin{pmatrix}
-1 & 0 & 0 & 0\\
0 & 1 & 0 & 0\\
0 & 0 & -1 & 0\\
0 & 0 & 0 & 1
\end{pmatrix},
N=\begin{pmatrix}
1 & 0 & 0 & 0\\
0 & -1 & 0 & 0\\
0 & 0 & -1 & 0\\
0 & 0 & 0 & 1
\end{pmatrix},
A=\begin{pmatrix}
0 & 0 & 1 & 0\\
1 & 0 & 0 & 0\\
0 & 1 & 0 & 0\\
0 & 0 & 0 & 1
\end{pmatrix},
B=\begin{pmatrix}
0 & i & 0 & 0\\
1 & 0 & 0 & 0\\
0 & 0 & 0 & -i\\
0 & 0 & 1 & 0
\end{pmatrix}.
$$
and let $G$ be the~image of the~group $\widehat{G}$ in $\mathrm{PGL}_4(\mathbb{C})$ via the~natural projection $\mathrm{GL}_4(\mathbb{C})\to\mathrm{PGL}_4(\mathbb{C})$.
Then $\widehat{G}\simeq\mumu_4.(\mumu_4^2\rtimes\mumu_3)$ and $G\simeq\mumu_4^2\rtimes\mumu_3$, and their GAP ID's are [192,4] and [48,3], respectively.
Using GAP \cite{GAP}, one can check that $H^2(G,\mathbb{C}^\ast)\simeq\mumu_4$, and $\widehat{G}$ is a~covering group of the~group $G$.

\begin{lemma}
\label{lemma:48-3-unique}
Let $G^\prime$ be a~subgroup in $\mathrm{PGL}_4(\mathbb{C})$ such that $G^\prime\simeq G$ and $G^\prime$ does not fix points in $\mathbb{P}^3$.
Then $G^\prime$ is conjugate to $G$ in $\mathrm{PGL}_4(\mathbb{C})$.
\end{lemma}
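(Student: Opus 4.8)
The plan is to reduce the statement to a cohomological rigidity argument for the four-dimensional projective representation $G \hookrightarrow \mathrm{PGL}_4(\mathbb{C})$, using the fact that $\widehat{G} \simeq \mumu_4.(\mumu_4^2\rtimes\mumu_3)$ is a Schur cover of $G$ with $H^2(G,\mathbb{C}^\ast)\simeq\mumu_4$. First I would observe that any embedding $G' \hookrightarrow \mathrm{PGL}_4(\mathbb{C})$ lifts to a projective representation of $G'$, hence (since $G'\simeq G$ and $\widehat{G}$ is a covering group) to an honest linear representation $\rho\colon \widehat{G}\to\mathrm{GL}_4(\mathbb{C})$ whose central character on $\ker(\widehat{G}\to G)\simeq\mumu_4$ is a generator of $\mumu_4$ — otherwise the representation would factor through a proper quotient, and one checks (via GAP, looking at the subgroups of $\widehat{G}$ that surject onto $G$) that no such $4$-dimensional faithful-on-$G'$ representation exists. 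Thus both $G$ and $G'$ arise from genuinely $4$-dimensional representations of $\widehat{G}$ on which the center acts faithfully.

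Next I would enumerate, using the character table of $\widehat{G}$ (computed in GAP from the GAP ID $[192,4]$), the irreducible characters of degree dividing $4$ whose restriction to the central $\mumu_4$ has order exactly $4$. The expectation is that there is, up to the action of $\mathrm{Gal}(\mathbb{Q}(i)/\mathbb{Q})$ on characters (which corresponds to the two choices of $i$, i.e. to conjugation in $\mathrm{PGL}_4$ by an outer element, or simply relabeling), a \emph{unique} such character $\chi$ of degree $4$ — the one realized by the matrices $M,N,A,B$ above — together possibly with a Galois conjugate, which gives the same subgroup of $\mathrm{PGL}_4(\mathbb{C})$. I would then invoke that a representation is determined up to conjugacy in $\mathrm{GL}_4(\mathbb{C})$ by its character; passing to $\mathrm{PGL}_4(\mathbb{C})$, this shows $G'$ is conjugate to $G$. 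The hypothesis that $G'$ has no fixed point in $\mathbb{P}^3$ is exactly what rules out the reducible possibilities (sums of characters of smaller degree with the correct central character on $\mumu_4$), since a $G'$-invariant point corresponds to a one-dimensional subrepresentation of $\rho$; so this hypothesis is used precisely to force $\rho$ (equivalently $\chi$) to be irreducible, or at least fixed-point-free, leaving only the $4$-dimensional irreducible case.

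The main obstacle I anticipate is the bookkeeping around the Schur multiplier: one must be careful that \emph{every} lift of $G' \hookrightarrow \mathrm{PGL}_4(\mathbb{C})$ to $\mathrm{GL}_4$ yields a representation of the \emph{same} covering group $\widehat{G}$ and not of some other central extension of $G$ by a subgroup of $H^2(G,\mathbb{C}^\ast)$; since $H^2(G,\mathbb{C}^\ast)\simeq\mumu_4$ is cyclic, all relevant extensions are quotients of $\widehat{G}$, so a $4$-dimensional projective representation either factors through a proper quotient of $\widehat{G}$ (ruled out by the no-fixed-point hypothesis together with a GAP check that such quotients have no fixed-point-free $4$-dimensional representations) or is pulled back from the faithful linear representation $\rho$ of $\widehat{G}$ itself. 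Once this is pinned down, the remainder is the standard "characters determine representations up to conjugacy" argument, and the GAP verification that the list of admissible degree-$4$ characters of $\widehat{G}$ consists of a single $\mathrm{PGL}_4$-conjugacy class. I would phrase the final writeup so that the two explicit GAP computations — the character table / list of admissible irreducibles of $\widehat{G}$, and the verification that proper quotients contribute nothing — are clearly isolated, with the rest being the representation-theoretic glue.
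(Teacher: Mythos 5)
Your approach is essentially the paper's own stated \emph{alternative} proof: the paper's primary argument is a two-line citation (Lemma~2.7 of the reference on equivariant pliability of the projective space together with Blichfeldt's classification of finite subgroups of $\mathrm{PGL}_4(\mathbb{C})$), but the paper explicitly notes that one can instead analyze the irreducible representations of the covering group $\widehat{G}$, which is exactly what you propose. The Schur-theoretic skeleton is right: lifts of the projective representation live on $\widehat{G}$ because $Z=\ker(\widehat{G}\to G)$ sits inside $[\widehat{G},\widehat{G}]$, so the central character of a lift is independent of the choice of lift and is governed by the class in $H^2(G,\mathbb{C}^*)\simeq\mumu_4$; the classes of non-maximal order factor through $G$ itself or the order-$96$ quotient, and these must be excluded by inspection of their representations.

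Two points need tightening before this is a proof. First, you conflate ``no fixed point in $\mathbb{P}^3$'' with ``irreducible'': the absence of a fixed point only rules out one-dimensional subrepresentations, so a fixed-point-free \emph{reducible} representation of type $2+2$ (two $2$-dimensional constituents sharing the required central character) would slip through your argument. Your GAP enumeration of irreducible characters of degree dividing $4$ with the prescribed central character would detect such constituents, but you must explicitly verify there are none (for the faithful $\mumu_4$-character \emph{and} for the order-$2$ character on the intermediate quotient of order $96$), rather than asserting that reducibility forces an invariant line. Second, ``characters determine representations up to $\mathrm{GL}_4$-conjugacy'' gives conjugacy of \emph{representations}, whereas the lemma asserts conjugacy of \emph{subgroups}: if GAP returns several admissible degree-$4$ characters, you must check that they are permuted transitively by $\mathrm{Aut}(G)$ (lifted to $\widehat{G}$) together with complex conjugation, and that complex conjugation of the character really is realized by conjugation of the image inside $\mathrm{PGL}_4(\mathbb{C})$ --- the conjugate representation has image the entrywise conjugate subgroup, which is abstractly but not automatically projectively conjugate to the original. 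Both checks are finite computations of the kind you already isolate, so the plan is workable, but as written these are genuine gaps rather than bookkeeping.
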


\begin{proof}
The claim follows from \cite[Lemma 2.7]{CheltsovSarikyan}
and the~classification of finite subgroups~in~$\mathrm{PGL}_4(\mathbb{C})$, which can be found in \cite{Blichfeldt1917}.
Alternatively, one can prove the~required assertion analyzing irreducible representations of the~group $\widehat{G}$,
which can be found in \cite{Tim}.
\end{proof}

The main goal of this subsection is to show that the~projective space $\mathbb{P}^3$ contains
a $G$-invariant irreducible smooth non-hyperelliptic curve of degree $6$ and genus $3$,
and this curve is unique up to the~action of the~normalizer of the~group $G$ in $\mathrm{PGL}_4(\mathbb{C})$.
First, let us describe the~normalizer. Set
$$
 C_4^{\pm}=\big\{\big(1\mp\sqrt{3}i\big)x_1^2-(1\pm\sqrt{3}i)x_2^2+2x_3^2=0,2x_0^2-(1\pm\sqrt{3}i)x_1^2-\big(1\mp\sqrt{3}i\big)x_2^2=0\big\}\subset\mathbb{P}^3.
$$
Then $C_{4}^{\pm}$ is a~$G$-invariant elliptic curve,
and $\mathrm{Aut}(\mathbb{P}^3,C_{4}^{\pm})$ is the~subgroup in $\mathrm{PGL}_4(\mathbb{C})$ generated by
$$
\begin{pmatrix}
-1 & 0 & 0 & 0\\
0 & 1 & 0 & 0\\
0 & 0 & 1 & 0\\
0 & 0 & 0 & 1
\end{pmatrix},
\begin{pmatrix}
1 & 0 & 0 & 0\\
0 & -1 & 0 & 0\\
0 & 0 & 1 & 0\\
0 & 0 & 0 & 1
\end{pmatrix},
\begin{pmatrix}
1 & 0 & 0 & 0\\
0 & 1 & 0 & 0\\
0 & 0 & -1 & 0\\
0 & 0 & 0 & 1
\end{pmatrix},
\begin{pmatrix}
0 & 0 & 1 & 0\\
1 & 0 & 0 & 0\\
0 & 1 & 0 & 0\\
0 & 0 & 0 & 1
\end{pmatrix},
\begin{pmatrix}
0 & i & 0 & 0\\
1 & 0 & 0 & 0\\
0 & 0 & 0 & i\\
0 & 0 & 1 & 0
\end{pmatrix}.
$$
Note that $\mathrm{Aut}(\mathbb{P}^3,C_{4}^{\pm})\simeq\mumu_2^3.\mathfrak{A}_4$.
Let $G_{192,185}$ be the~subgroup in $\mathrm{PGL}_4(\mathbb{C})$ generated by
$$
\begin{pmatrix}
-1 & 0 & 0 & 0\\
0 & 1 & 0 & 0\\
0 & 0 & 1 & 0\\
0 & 0 & 0 & 1
\end{pmatrix},
\begin{pmatrix}
1 & 0 & 0 & 0\\
0 & -1 & 0 & 0\\
0 & 0 & 1 & 0\\
0 & 0 & 0 & 1
\end{pmatrix},
\begin{pmatrix}
1 & 0 & 0 & 0\\
0 & 1 & 0 & 0\\
0 & 0 & -1 & 0\\
0 & 0 & 0 & 1
\end{pmatrix},
\begin{pmatrix}
0 & 0 & 0 & 1\\
i & 0 & 0 & 0\\
0 & 1 & 0 & 0\\
0 & 0 & 1 & 0
\end{pmatrix},
\begin{pmatrix}
0 & i & 0 & 0\\
-i & 0 & 0 & 0\\
0 & 0 & i & 0\\
0 & 0 & 0 & 1
\end{pmatrix}.
$$
Then its GAP ID is [192,185]. Note that $\mathrm{Aut}(\mathbb{P}^3,C_{4}^{\pm})\vartriangleleft G_{192,185}\simeq\mumu_2^3.\mathfrak{S}_4$ and $G\vartriangleleft G_{192,185}$.

\begin{lemma}
\label{lemma:48-normalizer-PGL}
The normalizer in $\mathrm{PGL}_4(\mathbb{C})$ of the~subgroup $G$ is the~subgroup $G_{192,185}$.
\end{lemma}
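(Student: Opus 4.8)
The plan is to show the two inclusions $G_{192,185}\subseteq N_{\mathrm{PGL}_4(\mathbb{C})}(G)$ and $N_{\mathrm{PGL}_4(\mathbb{C})}(G)\subseteq G_{192,185}$ separately. The first inclusion is essentially a finite computation: since $G\vartriangleleft G_{192,185}$ was already recorded above, $G_{192,185}$ normalizes $G$, and one only needs to verify this normality, which can be done directly with the explicit matrix generators (or confirmed with Magma/GAP as elsewhere in the paper). So the content of the lemma is the reverse inclusion, i.e.\ that $N := N_{\mathrm{PGL}_4(\mathbb{C})}(G)$ is not strictly larger than $G_{192,185}$.

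For the reverse inclusion I would argue representation-theoretically, using the covering group $\widehat{G}\simeq\mumu_4.(\mumu_4^2\rtimes\mumu_3)$ from the discussion preceding Lemma~\ref{lemma:48-3-unique}. The embedding $G\hookrightarrow\mathrm{PGL}_4(\mathbb{C})$ corresponds to a $4$-dimensional projective representation of $G$, equivalently a genuine $4$-dimensional representation $V$ of $\widehat{G}$ with the Schur multiplier acting by the appropriate central character; by the character table of $\widehat{G}$ (available in \cite{Tim}) this representation is irreducible. Any $g\in N$ induces by conjugation an automorphism $\varphi_g$ of $G$, hence of $\widehat{G}$ up to the center, and $g$ realizes an isomorphism of projective representations $V \cong V^{\varphi_g}$. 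The first step is therefore to determine $\mathrm{Out}(\widehat{G})$ (or the relevant subgroup of $\mathrm{Aut}(G)$), identify which outer automorphisms fix the isomorphism class of the projective representation $V$, and note that those which do are exactly realized by elements of $G_{192,185}$; an automorphism not preserving $[V]$ cannot be induced by any $g\in\mathrm{PGL}_4(\mathbb{C})$. This pins down the image of $N$ in $\mathrm{Out}(G)$.

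Next I would control the kernel of $N\to\mathrm{Out}(G)$, i.e.\ elements of $N$ inducing inner automorphisms of $G$: after multiplying by an element of $G$ we may assume $g$ centralizes $G$, and since $V$ is an irreducible $\widehat{G}$-module, Schur's lemma forces the centralizer of $G$ in $\mathrm{PGL}_4(\mathbb{C})$ to be trivial. Combining the two steps gives $|N/G| = |{\rm image\ in\ Out}(G)|$, which one matches against $|G_{192,185}/G| = 192/48 = 4$; together with the already-established inclusion $G_{192,185}\subseteq N$ this forces equality. An alternative, more hands-on route — which I would keep in reserve in case the outer-automorphism bookkeeping is delicate — is to use Lemma~\ref{lemma:48-3-unique}: if $g\in N$, then conjugation by $g$ sends the $G$-invariant configuration (the curves $C_4^{\pm}$, or the $G$-fixed pencils/invariant quadrics in $\mathbb{P}^3$) to another $G$-invariant configuration of the same type, and uniqueness up to $N$ of such objects, pushed far enough, confines $g$ to $G_{192,185}$.

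The main obstacle I anticipate is the second step's interaction with the Schur multiplier: because $H^2(G,\mathbb{C}^\ast)\simeq\mumu_4$ is nontrivial, one must be careful that an abstract automorphism of $G$ genuinely lifts to $\widehat{G}$ compatibly with the central character cutting out $V$, and that "preserves $[V]$" is the correct condition rather than "preserves the set of $4$-dimensional irreducibles". Handling this cleanly is where I would either invoke the explicit representation data of $\widehat{G}$ from \cite{Tim} or fall back on a direct Magma verification that the normalizer of the given matrix group is exactly $G_{192,185}$, in the spirit of the computational checks used in Lemma~\ref{lemma:S4} and Lemma~\ref{lemma:48-3-unique}.
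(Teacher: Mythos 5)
Your route is genuinely different from the paper's. The paper disposes of the lemma in one line by a geometric argument: the pair of $G$-invariant elliptic curves $C_4^{+}+C_4^{-}$ is canonically attached to $G$, so any element of the normalizer must preserve $C_4^{+}\cup C_4^{-}$; combined with the explicit description of $\mathrm{Aut}(\mathbb{P}^3,C_4^{\pm})\simeq\mumu_2^3.\mathfrak{A}_4$ and the fact that $G_{192,185}$ already realizes the swap of the two curves, this pins the normalizer down to $G_{192,185}$. This is essentially the ``fallback'' you mention at the end of your third paragraph, so you had the paper's argument in reserve; your primary argument via $\mathrm{Out}(G)$ and the projective representation is a legitimate alternative and is the more systematic one, at the cost of more bookkeeping with the covering group $\widehat{G}$.

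One step of your main argument needs repair. You claim that Schur's lemma forces the centralizer of $G$ in $\mathrm{PGL}_4(\mathbb{C})$ to be trivial because $V$ is an irreducible $\widehat{G}$-module. That is not automatic for a \emph{projective} representation: an element of $\mathrm{PGL}_4(\mathbb{C})$ centralizing $G$ lifts to a matrix $M$ with $M\hat{g}M^{-1}=\chi(\hat{g})\,\hat{g}$ for some character $\chi$ of $\widehat{G}$, and Schur's lemma only kills the case $\chi=1$. The centralizer of $G$ in $\mathrm{PGL}_4(\mathbb{C})$ is isomorphic to the group of characters $\chi$ of $\widehat{G}$ with $V\otimes\chi\cong V$, which must be computed from the character table (here $\widehat{G}^{\mathrm{ab}}$ is nontrivial, so this is not vacuous). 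You need to check either that this group is trivial or that the corresponding elements already lie in $G_{192,185}$; the same care is needed when you match the image of $N$ in $\mathrm{Out}(G)$ against $|G_{192,185}/G|=4$, since a nontrivial projective centralizer would inflate $|N/G|$ beyond the outer-automorphism count. With that verification added (or by falling back on the invariant-curve argument, or on a direct Magma computation of the normalizer of the matrix group), your proof goes through.
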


\begin{proof}
This follows from the~fact that the~curve $C_{4}^++C_{4}^-$ is $G_{192,185}$-invariant.
\end{proof}

Let us describe $G$-orbits in $\mathbb{P}^3$ of length less than $48$. To do this, we let
\begin{align*}
\Sigma_{4}&=\mathrm{Orb}_G\big([1:0:0:0]\big),\\
\Sigma_{12}&=\mathrm{Orb}_G\big([1+i:\sqrt{2}:0:0]\big),\\
\Sigma_{12}^\prime&=\mathrm{Orb}_G\big([1-i:\sqrt{2}:0:0]\big),\\
\Sigma_{16}&=\mathrm{Orb}_G\big([-1+\sqrt{3}i:-1-\sqrt{3}i:2:0]\big),\\
\Sigma_{16}^\prime&=\mathrm{Orb}_G\big([-1-\sqrt{3}i:-1+\sqrt{3}i:2:0]\big),\\
\Sigma_{16}^u&=\mathrm{Orb}_G\big([1:1:1:u]\big)\ \text{for}\ u\in\mathbb{C},\\
\Sigma_{24}^t&=\mathrm{Orb}_G\big([2:t:0:0]\big)\ \text{for}\ t\in\mathbb{C}\ \text{such that}\ t\ne 0\ \text{and}\ t\ne\pm \sqrt{2}\pm\sqrt{2}i.
\end{align*}
Then $\Sigma_{4}$, $\Sigma_{12}$, $\Sigma_{12}^\prime$, $\Sigma_{16}$, $\Sigma_{16}^\prime$, $\Sigma_{16}^u$, $\Sigma_{24}^t$
are $G$-orbits of length $4$, $12$, $12$, $16$, $16$, $16$, $24$, respectively.

\begin{lemma}
\label{lemma:G-subgroups}
Let $\Sigma$ be a~$G$-orbit in $\mathbb{P}^3$ such that $|\Sigma|<48$.
Then $\Sigma$ is one of the~$G$-orbits
\begin{center}
$\Sigma_{4}$, $\Sigma_{12}$, $\Sigma_{12}^\prime$, $\Sigma_{16}$, $\Sigma_{16}^\prime$, $\Sigma_{16}^u$, $\Sigma_{24}^t$,
\end{center}
where $u\in\mathbb{C}$ and $t\in\mathbb{C}$ such that $0\ne t\ne\pm \sqrt{2}\pm\sqrt{2}i$.
\end{lemma}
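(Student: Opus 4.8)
The plan is to classify all $G$-orbits in $\mathbb{P}^3$ of length strictly less than $48 = |G|$ by a stabilizer-based argument. Since $|\Sigma| = |G|/|\mathrm{Stab}_G(P)|$ for any $P \in \Sigma$, an orbit has length $< 48$ precisely when $P$ is fixed by some nontrivial element $g \in G$, i.e. $P$ lies on the fixed locus $\mathrm{Fix}(g) \subset \mathbb{P}^3$ of some element $g \neq \mathrm{id}$. So the first step is to enumerate, up to conjugacy in $G$, the nontrivial elements $g$ and compute their fixed loci: each $g$ is represented (via the covering group $\widehat{G}$) by a matrix of finite order, and $\mathrm{Fix}(g)$ is a union of projective-linear subspaces corresponding to the eigenspaces of that matrix. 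Because $G \simeq \mumu_4^2 \rtimes \mumu_3$ has order $48$, there are only a handful of conjugacy classes (of orders $2$, $3$, $4$, $6$), and each fixed locus is either a finite set of points, a line, a plane, or a pair of such.

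Next I would work through these fixed loci one at a time. For a $g$ whose fixed locus is a union of planes or lines, I intersect further with $\mathrm{Fix}(h)$ for other elements $h$ to cut down to the points that have genuinely large stabilizer; for a $g$ whose fixed locus already consists of isolated points, those points are directly candidates. In each case I then compute the full stabilizer $\mathrm{Stab}_G(P)$ — equivalently the $G$-orbit — and check the length. The $G$-invariant loci listed before the lemma ($\Sigma_4$, the curves $C_4^\pm$, the planes, the coordinate points) tell me where to look: for instance $[1:0:0:0]$ and its $G$-translates give $\Sigma_4$; points on the lines $\{x_2 = x_3 = 0\}$ and its images give the orbits $\Sigma_{12}$, $\Sigma_{12}^\prime$, $\Sigma_{24}^t$ (with the exceptional values of $t$ corresponding to points of $\Sigma_4$ or $\Sigma_{12}^{(\prime)}$); points on $C_4^\pm$ give $\Sigma_{16}$, $\Sigma_{16}^\prime$; and the diagonal-type points $[1:1:1:u]$ give the one-parameter family $\Sigma_{16}^u$. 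The claim to verify is that this list is exhaustive: every point with nontrivial stabilizer lies in one of these families, and conversely each listed family genuinely has the stated length for the generic (or all) parameter values.

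The main obstacle will be bookkeeping: making sure no short orbit is missed. Concretely, the delicate part is handling elements of order $2$ and $4$ whose fixed locus is a plane or the union of two lines — these planes carry $2$-dimensional families of points, and I must argue that a point in such a plane has stabilizer of order $> 2$ (hence lies in a listed orbit) unless it is generic, in which case its orbit has length exactly $24$ and, after accounting for the subgroup structure, coincides with some $\Sigma_{24}^t$ or with the $16$-point families. This amounts to restricting the $G$-action to each invariant plane/line, identifying the induced finite group acting there, and classifying its short orbits in $\mathbb{P}^2$ or $\mathbb{P}^1$ — a finite check that I would organize by the normal subgroup $\mumu_4^2$ and the quotient $\mumu_3$ permuting coordinates. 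Everything else (orders of elements, eigenspace computations, orbit lengths) is routine linear algebra over $\mathbb{Q}(i,\sqrt{3})$, and can be confirmed with GAP or Magma as the authors do elsewhere; I would cite such a computation to close the remaining cases rather than writing out all the matrix arithmetic.
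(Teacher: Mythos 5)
Your proposal is correct and follows essentially the same strategy as the paper: the paper also reduces the classification of short orbits to a fixed-point computation for the possible stabilizers, except that it organizes the case analysis by the conjugacy classes of \emph{proper subgroups} of $G$ (listing all of them explicitly and computing the fixed locus of each) rather than by conjugacy classes of individual elements followed by a determination of full stabilizers. The only substantive difference is bookkeeping, and you correctly identify the one genuinely delicate case — elements of order $2$ whose fixed locus is a pair of lines, which the paper handles by noting that all such fixed points lie on $\{x_0=x_1=0\}\cup\{x_2=x_3=0\}$ and hence give the orbits $\Sigma_{24}^t$ generically.
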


\begin{proof}
Let us describe subgroups of the~group $G$.
To do this, identify the~matrices $M$, $N$, $A$, $B$ with their images in $\mathrm{PGL}_4(\mathbb{C})$.
Set
$$
C=ANBMA^2=\begin{pmatrix}
0 & 0 & 0 & 1\\
0 & 0 & 1 & 0\\
0 & i & 0 & 0\\
-i & 0 & 0 & 0
\end{pmatrix}.
$$
Then, using \cite{Tim}, we see that all proper subgroups of the~group $G$ can be described as follows:
\begin{itemize}
\item[(i)] $\langle B,C\rangle\simeq\mumu_4^2$ is the~unique (normal) subgroup of order $16$,
\item[(ii)] $\langle A,M,N\rangle\simeq\mathfrak{A}_4$ is one of four conjugated subgroups of order $12$,
\item[(iii)] $\langle B,M,N\rangle\simeq\mumu_2\times\mumu_4$ is one of three conjugated subgroups of order $8$,
\item[(iv)] $\langle M,N\rangle\simeq\mumu_2^2$ is the~unique (normal) subgroup isomorphic to $\mumu_2^2$,
\item[(v)] $\langle B\rangle\simeq\mumu_4$ and $\langle CB\rangle\simeq\mumu_4$ are non-conjugate subgroups,
their conjugacy classes consist of three subgroups, which are all subgroups of the~group $G$ isomorphic to~$\mumu_4$,
\item[(vi)] $\langle A\rangle\simeq\mumu_3$ is one of sixteen conjugated subgroups of order $3$,
\item[(vii)] $\langle M\rangle\simeq\mumu_2$ is one of three conjugated subgroups of order $2$.
\end{itemize}

Now, let $\Gamma$ be the~stabilizer in $G$ of a~point in $\Sigma$.
Then $\Gamma$ is a~proper subgroup of the~group $G$, since $G$ fixes no points in~$\mathbb{P}^3$.
So, we may assume that $\Gamma$ is one of the~subgroups $\langle B,C\rangle$, $\langle A,M,N\rangle$, $\langle B,M,N\rangle$, $\langle M,N\rangle$, $\langle B\rangle$, $\langle CB\rangle$, $\langle A\rangle$, $\langle M\rangle$.
On the~other hand, one can check that
\begin{itemize}
\item[(i)] $\langle B,C\rangle$ does not fix points in $\mathbb{P}^3$,
\item[(ii)] the~only fixed point of $\langle A,M,N\rangle$ is the~point $[1:0:0:0]\in\Sigma_{4}$,
\item[(iii)] $\langle B,M,N\rangle$ does not fix points in $\mathbb{P}^3$,
\item[(iv)] $\langle M,N\rangle$ does not fix points in $\mathbb{P}^3\setminus\Sigma_{4}$,
\item[(v)]  the~only fixed point of $\langle B\rangle$ are the~points
$$
[1+i:\sqrt{2}:0:0],[1+i:-\sqrt{2}:0:0],[0:0:\sqrt{2}:1+i],[0:0:-\sqrt{2}:1+i],
$$
which are contained in $\Sigma_{12}$, and the~only fixed point of $\langle CB\rangle$ are the~points
$$
[\sqrt{2}:0:1-i:0],[-\sqrt{2}:0:1-i:0],[0:\sqrt{2}:0:1-i],[0:-\sqrt{2}:0:1-i],
$$
which are contained in the~$G$-orbit $\Sigma_{12}^\prime$,
\item[(vi)] the~only fixed points of $\langle A\rangle$ are the~points
\begin{itemize}
\item $[-1+\sqrt{3}i:-1-\sqrt{3}i:2:0]\in\Sigma_{16}$,
\item $[-1-\sqrt{3}i:-1+\sqrt{3}i:2:0]\in\Sigma_{16}^\prime$,
\item $[1:1:1:t]\in\Sigma_{16}^t$ for any $t\in\mathbb{C}$,
\item $[0:0:0:1]\in\Sigma_4$,
\end{itemize}
\item[(vii)] all fixed points of $\langle M\rangle$ are contained in the~lines $\{x_0=x_1=0\}$ and $\{x_2=x_3=0\}$.
\end{itemize}
This implies the~required assertion.
\end{proof}

Now, we are ready to present a~$G$-invariant irreducible smooth curve in $\mathbb{P}^3$ of degree $6$ and genus~$3$.
For every $u\in\mathbb{C}$ such that $u\ne 0$, let $\mathcal{M}_3^u$ be the~linear subsystem in $|\mathcal{O}_{\mathbb{P}^3}(3)|$
that consists of all cubic surfaces passing through the~$G$-orbit $\Sigma_{16}^u$.
If $u^{4}=-3$, then the~linear system  $\mathcal{M}_3^u$ is $7$-dimensional, and its base locus
consists of one of the~two elliptic curves $C_4^+$ or $C_4^-$.
One the~other hand, if $u^4\ne -3$, then the~linear subsystem $\mathcal{M}_3^u$ is  $3$-dimensional,
and its base locus is given by
\begin{equation}
\label{equation:four-cubics}\tag{$\heartsuit$}
\left\{\aligned
&(u^4-1)x_3x_0^2+(u^4+3)x_0x_1x_2u+(u^4-1)x_3x_1^2-4x_3^3u^2+(u^4-1)x_2^2x_3=0,\\
&(u^4-1)x_1x_0^2-u(u^4+3)x_0x_2x_3+4u^2x_1^3-(u^4-1)x_1x_2^2+(u^4-1)x_3^2x_1=0,\\
&4u^2x_0^3-(u^4-1)x_0x_1^2+(u^4-1)x_0x_2^2+(u^4-1)x_3^2x_0-u(u^4+3)x_1x_2x_3=0,\\
&(u^4-1)x_2x_0^2+u(u^4+3)x_0x_1x_3-(u^4-1)x_2x_1^2-4u^2x_2^3-(u^4-1)x_3^2x_2=0.
\endaligned
\right.
\end{equation}
Using this, one can check that the~base locus is zero-dimensional unless
$$
u\in\Bigg\{\frac{-1\pm \sqrt{3}}{2}+\frac{1\mp \sqrt{3}}{2}i,\frac{-1\pm \sqrt{3}}{2}+\frac{-1\pm \sqrt{3}}{2}i,
\frac{1\pm \sqrt{3}}{2}+\frac{1\pm \sqrt{3}}{2}i,\frac{1\mp \sqrt{3}}{2}+\frac{-1\pm \sqrt{3}}{2}i\Bigg\}.
$$
On the~other hand, if $u=\frac{-1\pm \sqrt{3}}{2}+\frac{1\mp \sqrt{3}}{2}i$, $u=\frac{-1\pm \sqrt{3}}{2}+\frac{-1\pm \sqrt{3}}{2}i$,
$u=\frac{1\pm \sqrt{3}}{2}+\frac{1\pm \sqrt{3}}{2}i$ or $u=\frac{1\mp \sqrt{3}}{2}+\frac{-1\pm \sqrt{3}}{2}i$,
then the~equations \eqref{equation:four-cubics} define an irreducible $G$-invariant smooth curve in $\mathbb{P}^3$ of degree $6$~and~genus~$3$.
We will denote these curves by $C_6$, $C_6^\prime$,  $C_6^{\prime\prime}$, $C_6^{\prime\prime\prime}$, respectively.
To be precise, we have
$$
C_6=\left\{\aligned
&(x_0^2+x_1^2+x_2^2)x_3-ix_3^3-(1-i)x_1x_0x_2=0,\\
&(x_0^2-x_2^2+x_3^2)x_1+ix_1^3+(1-i)x_3x_0x_2=0,\\
&(x_1^2-x_2^2-x_3^2)x_0-ix_0^3-(1-i)x_1x_3x_2=0,\\
&(x_0^2-x_1^2-x_3^2)x_2-ix_2^3-(1-i)x_1x_3x_0=0,
\endaligned
\right.
$$
and $C_6^\prime$,  $C_6^{\prime\prime}$, $C_6^{\prime\prime\prime}$
can be obtained from $C_6$ by applying elements of the~normalizer $G_{192,185}$.

Fix $u=\frac{-1\pm \sqrt{3}}{2}+\frac{1\mp \sqrt{3}}{2}i$.
Then \eqref{equation:four-cubics} defines $C_6$.
Choosing a~different basis of the~linear system $\mathcal{M}_3^u$,
we obtain a~birational map $\iota\colon\mathbb{P}^3\dasharrow\mathbb{P}^3$ given by $[x_0:x_1:x_2:x_3]\mapsto[h_0:h_1:h_2:h_3]$ for
\begin{align*}
h_0&=(1+i)x_3x_0x_2-x_1^3+ix_1(x_0^2-x_2^2+x_3^2),\\
h_1&=(1+i)x_3x_1x_2-x_0^3-ix_0(x_1^2-x_2^2-x_3^2),\\
h_2&=(1+i)x_3x_0x_1-x_2^3-ix_2(x_0^2-x_1^2-x_3^2),\\
h_3&=(i-1)x_1x_0x_2-ix_3^3+x_3(x_0^2+x_1^2+x_2^2).
\end{align*}
One can check that $\iota$ is a~birational involution,
and we have the~following $G$-commutative diagram:
$$
\xymatrix{
X\ar@{->}[d]_{\pi}\ar@{->}[rr]^{\tau}&&X\ar@{->}[d]^{\pi}\\%
\mathbb{P}^3\ar@{-->}[rr]_{\iota}&&\mathbb{P}^3}
$$
where $\pi$ is the~blow up of the~curve  $C_6$, and $\tau$ is an involution.
Then $X$ is smooth Fano threefold in the~deformation family \textnumero 2.12,
which can be defined as complete intersection in $\mathbb{P}^3\times\mathbb{P}^3$ given by
$$
\left\{\aligned
&y_3x_0-y_2x_0+iy_2x_1+y_3x_1-y_0x_2+iy_1x_2+y_0x_3+y_1x_3=0,\\
&iy_0x_0-y_1x_1+y_3x_2+y_2x_3=0,\\
&y_2x_0+y_3x_0+iy_2x_1-y_3x_1-y_0x_2-iy_1x_2-y_0x_3+y_1x_3=0,
\endaligned
\right.
$$
and $\pi$ is induced by the~projection to the~first factor,
where $([x_0:x_1:x_2:x_3],[y_0:y_1:y_2:y_3])$ are coordinates on $\mathbb{P}^3\times\mathbb{P}^3$.
Thus, in the~notations in Section~\ref{section:intro}, we have
$$
M_1=
\begin{pmatrix}
0&0&-1&1\\
0&0&i&1\\
-1&i&0&0\\
1&1&0&0
\end{pmatrix},
M_2=\begin{pmatrix}
i&0&0&0\\
0&-1&0&0\\
0&0&0&1\\
0&0&1&0
\end{pmatrix},
M_3=
\begin{pmatrix}
0&0&1&1\\
0&0&i&-1\\
-1&-i&0&0\\
-1&1&0&0
\end{pmatrix}.
$$
Note that $M_1$ and $M_2$ are symmetric, $M_3$ is skew-symmetric,
and the~involution $\tau$ is given by
$$
\big([x_0:x_1:x_2:x_3],[y_0:y_1:y_2:y_3]\big)\mapsto\big([y_0:y_1:y_2:y_3],[x_0:x_1:x_2:x_3]\big),
$$

\begin{corollary}
\label{corollary:Fermat}
One has $\mathrm{Aut}(\mathbb{P}^3,C_6)=G$ and $\mathrm{Aut}(X)\simeq\mumu_4^2\rtimes\mathfrak{S}_3$.
\end{corollary}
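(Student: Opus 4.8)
The plan is to establish the two equalities separately, in each case exhibiting explicit containments and then matching orders.

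For the claim $\mathrm{Aut}(\mathbb{P}^3,C_6)=G$, one containment is immediate: the curve $C_6$ was constructed as the base locus of the $G$-invariant linear system $\mathcal{M}_3^u$, so $G\subseteq\mathrm{Aut}(\mathbb{P}^3,C_6)$. For the reverse, I would argue that $\mathrm{Aut}(\mathbb{P}^3,C_6)$ normalizes $G$. Indeed, by Lemma~\ref{lemma:48-3-unique} the subgroup $G\subset\mathrm{PGL}_4(\mathbb{C})$ is, up to conjugacy, the unique copy of $\mumu_4^2\rtimes\mumu_3$ with no fixed point in $\mathbb{P}^3$; since $G$ preserves $C_6$ and $C_6$ is $\mathrm{Aut}(\mathbb{P}^3,C_6)$-invariant, conjugating $G$ by any $g\in\mathrm{Aut}(\mathbb{P}^3,C_6)$ again yields a fixed-point-free copy of $G$ preserving $C_6$, hence conjugate to $G$. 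To upgrade ``conjugate'' to ``equal'', note that $\mathrm{Aut}(C_6)\simeq\mathrm{Aut}(C)$ for the associated genus-three curve $C$; by the analysis in Section~\ref{subsection:S4} (the Ottaviani/Dolgachev description), the curve $C_6$ arising from the Fermat-type quartic $x^4+y^4+z^4=0$ has $\mathrm{Aut}(C_6)\simeq\mumu_4^2\rtimes\mathfrak{S}_3$ of order $96$. Since $\mathrm{Aut}(\mathbb{P}^3,C_6)$ embeds into $\mathrm{Aut}(C_6)$ and contains $G$ of order $48$, it is either $G$ or all of $\mathrm{Aut}(C_6)$. If it were the latter, $\mathrm{Aut}(\mathbb{P}^3,C_6)$ would be normalized only inside the normalizer $N_{\mathrm{PGL}_4(\mathbb{C})}(G)=G_{192,185}$ (Lemma~\ref{lemma:48-normalizer-PGL}), and one checks, using the explicit generators of $G_{192,185}$ listed above, that no order-$96$ subgroup of $G_{192,185}$ containing $G$ acts on $\mathbb{P}^3$ preserving the specific cubics \eqref{equation:four-cubics} defining $C_6$ — equivalently, the extra $\mumu_2$ in $\mathrm{Aut}(C_6)/G$ corresponds to the hyperelliptic-type involution that is realized on $X$ by the factor-swap $\tau$ rather than by a projective transformation of the ambient $\mathbb{P}^3$. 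Hence $\mathrm{Aut}(\mathbb{P}^3,C_6)=G$.

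For the claim $\mathrm{Aut}(X)\simeq\mumu_4^2\rtimes\mathfrak{S}_3$, I would use the exact sequence from the introduction,
$$
1\to\mathrm{Aut}(\mathbb{P}^3,C_6)\to\mathrm{Aut}(X)\to\mumu_2,
$$
so that $\mathrm{Aut}(X)$ is an extension of a subgroup of $\mumu_2$ by $G$. The final map is surjective precisely when $\mathrm{Aut}(X)$ contains an element swapping $E$ and $E^\prime$; and such an element is exhibited explicitly above, namely the involution $\tau$, which makes sense as a biregular map of $X$ because the complete intersection defining $X$ in $\mathbb{P}^3\times\mathbb{P}^3$ is built from the matrices $M_1$ (symmetric), $M_2$ (symmetric), $M_3$ (skew-symmetric): swapping the two $\mathbb{P}^3$ factors sends the system to the system defined by $M_1^{\mathsf T}=M_1$, $M_2^{\mathsf T}=M_2$, $M_3^{\mathsf T}=-M_3$, which cuts out the same $X$. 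Therefore $|\mathrm{Aut}(X)|=2\cdot 48=96$, and $\mathrm{Aut}(X)$ is an extension $1\to G\to\mathrm{Aut}(X)\to\mumu_2\to 1$ with $G\simeq\mumu_4^2\rtimes\mumu_3$. Finally I would pin down the isomorphism type of this extension: $\tau$ normalizes $G$ inside $\mathrm{Aut}(X)$, and tracing through its action on the generators $M,N,A,B$ of $\widehat{G}$ (equivalently, computing how conjugation by $\tau$ permutes the subgroups of $G$ listed in the proof of Lemma~\ref{lemma:G-subgroups}) shows that $\tau$ acts on the normal $\mumu_4^2=\langle B,C\rangle$ and on the $\mumu_3=\langle A\rangle$ compatibly with the known outer action realizing $\mathfrak{S}_3$ on $\mumu_4^2$; since $\tau^2=\mathrm{id}$, the extension splits and $\mathrm{Aut}(X)\simeq\mumu_4^2\rtimes\mathfrak{S}_3$. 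A clean way to finish is simply to observe that $\mathrm{Aut}(X)$ has order $96$ and acts on $C_6$ through $\mathrm{Aut}(C_6)\simeq\mumu_4^2\rtimes\mathfrak{S}_3$ via the natural map $\mathrm{Aut}(X)\to\mathrm{Aut}(C_6)$ (whose kernel is trivial since an automorphism of $X$ fixing $C_6$ pointwise and commuting with $\pi$ is the identity), so the map is an isomorphism.

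The main obstacle I anticipate is the group-theoretic bookkeeping needed to identify the extension in the last step as the split one $\mumu_4^2\rtimes\mathfrak{S}_3$ rather than some other order-$96$ group containing $\mumu_4^2\rtimes\mumu_3$ with index $2$; this is exactly the place where one wants either an explicit Magma/GAP verification on the matrices $M,N,A,B$ together with the swap $\tau$, or the ``downstairs'' argument that $\mathrm{Aut}(X)\hookrightarrow\mathrm{Aut}(C_6)$ is injective and hence an isomorphism by order count. The injectivity of $\mathrm{Aut}(X)\to\mathrm{Aut}(C_6)$ deserves a line of justification: the surface $E$ is a conic bundle over $C_6$ (the projectivized normal bundle), an automorphism of $X$ fixing $E\cup E^\prime$ and inducing the identity on $C_6$ must then be the identity on $E$, hence on a neighborhood of $E$, hence on $X$; while an automorphism swapping $E$ and $E^\prime$ cannot fix $C_6\subset\mathbb{P}^3$ pointwise because $C_6$ and $C_6^\prime$ sit in different copies of $\mathbb{P}^3$. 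Everything else is either already in the excerpt or a routine consequence of Lemmas~\ref{lemma:48-3-unique}, \ref{lemma:48-normalizer-PGL}, and~\ref{lemma:G-subgroups} and the explicit model of $X$ displayed above.
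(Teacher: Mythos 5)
Your proposal follows essentially the same route as the paper's proof: identify $C_6$ with the Fermat quartic, so that $\mathrm{Aut}(\mathbb{P}^3,C_6)$ --- which contains $G$ of order $48$ and acts faithfully on $C_6$ with $\mathrm{Aut}(C_6)\simeq\mumu_4^2\rtimes\mathfrak{S}_3$ of order $96$ --- is either $G$ or all of $\mathrm{Aut}(C_6)$; rule out the second possibility by placing $\mathrm{Aut}(\mathbb{P}^3,C_6)$ inside the normalizer $G_{192,185}$ of Lemma~\ref{lemma:48-normalizer-PGL}; then adjoin the factor-swap $\tau$ and identify $\langle G,\tau\rangle$ explicitly. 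One step of your write-up needs repair: that $\mathrm{Aut}(\mathbb{P}^3,C_6)$ normalizes $G$ is not established by observing that $gGg^{-1}$ is \emph{conjugate} to $G$ (that is automatic and gives nothing); the correct reason is that in the bad case $G$ would have index $2$ in $\mathrm{Aut}(\mathbb{P}^3,C_6)$, and an index-$2$ subgroup is normal, whence $\mathrm{Aut}(\mathbb{P}^3,C_6)\subseteq N_{\mathrm{PGL}_4(\mathbb{C})}(G)=G_{192,185}$. At that point the paper concludes more cheaply than you propose: $G_{192,185}\simeq\mumu_2^3.\mathfrak{S}_4$ simply contains no subgroup isomorphic to $\mumu_4^2\rtimes\mathfrak{S}_3$, so no verification against the cubics \eqref{equation:four-cubics} is needed (and, incidentally, the extra involution of the Fermat quartic is not of ``hyperelliptic type''). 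Your alternative finish via an injection $\mathrm{Aut}(X)\hookrightarrow\mathrm{Aut}(C_6)$ is in the spirit of Lemma~\ref{lemma:Aut-C2} and Corollary~\ref{corollary:Aut} (which do apply here, since $2D\not\sim K_C$ by Example~\ref{example:Fermat-48-3}), but as literally stated the ``natural map'' is not well defined on elements swapping $E$ and $E^\prime$, which induce an isomorphism $C_6\to C_6^\prime$ rather than an automorphism of $C_6$; the paper's homomorphism $\eta$ avoids this by acting on the discriminant quartic in the net of $(1,1)$-divisors. None of this affects the validity of the overall argument, which matches the paper's.
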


\begin{proof}
First, using the~classification of automorphism groups of smooth curves of genus three \cite{DolgachevBook,Bars},
we see that $C_6$ is isomorphic to the~Fermat quartic curve in $\mathbb{P}^2$.
This can also be shown directly.
Namely, it follows from \cite{Ottaviani} that $C_6$ is isomorphic to the~plane quartic curve
$$
\big\{\mathrm{det}(xM_1+yM_2+zM_3)=0\big\}\subset\mathbb{P}^2_{x,y,z},
$$
which is projectively isomorphic to the~Fermat plane quartic curve.

We conclude that $\mathrm{Aut}(C_6)\simeq\mumu_4^2\rtimes\mathfrak{S}_3$.
Therefore, if $\mathrm{Aut}(\mathbb{P}^3,C_6)\ne G$, then $\mathrm{Aut}(\mathbb{P}^3,C_6)\simeq \mumu_4^2\rtimes\mathfrak{S}_3$,
and the~subgroup $\mathrm{Aut}(\mathbb{P}^3,C_6)\subset\mathrm{PGL}_4(\mathbb{C})$ is contained in the~normalizer of the~group $G$ in $\mathrm{PGL}_4(\mathbb{C})$,
which is impossible since the~normalizer is the~group $G_{192,185}$ by Lemma~\ref{lemma:48-normalizer-PGL},
and $G_{192,185}$ does not contain subgroups isomorphic to $\mumu_4^2\rtimes\mathfrak{S}_3$.

Therefore, we conclude that $\mathrm{Aut}(\mathbb{P}^3,C_6)=G$.
Now, one can explicitly check that $\langle G,\tau\rangle\simeq\mumu_4^2\rtimes\mathfrak{S}_3$,
where we consider $G$ as a~subgroup in $\mathrm{Aut}(X)$.
This gives $\mathrm{Aut}(X)\simeq\mumu_4^2\rtimes\mathfrak{S}_3$.
\end{proof}

By Corollary~\ref{corollary:3}, the~smooth Fano threefold $X$ is K-stable.

In Section~~\ref{section:Aut}, we will see that $X$
is the~unique smooth Fano threefold in the~family \textnumero 2.12 whose automorphism group is isomorphic to the~group $\mumu_4^2\rtimes\mathfrak{S}_3$.
To do this, we need the~following result:

\begin{theorem}
\label{theorem:d-6-g-3}
The only $G$-invariant irreducible smooth curves in $\mathbb{P}^3$ of degree~$6$ are $C_6$, $C_6^\prime$,  $C_6^{\prime\prime}$, $C_6^{\prime\prime\prime}$.
\end{theorem}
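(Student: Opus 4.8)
The plan is to show that any $G$-invariant irreducible smooth curve $C$ of degree $6$ in $\mathbb{P}^3$ must be a genus-$3$ curve cut out by a linear system of the form $\mathcal{M}_3^u$ studied above, and then invoke the analysis of base loci of the systems $\mathcal{M}_3^u$ that already appeared in the construction of $C_6,\dots,C_6^{\prime\prime\prime}$. First I would pin down the cohomology: for a smooth irreducible curve $C\subset\mathbb{P}^3$ of degree $6$, Castelnuovo's bound gives genus $g\leqslant 4$, with $g=4$ forced onto a quadric and $g\leqslant 3$ otherwise. Since $G$ has no invariant quadric (one checks this directly from the generators $M,N,A,B$, as the space of $G$-invariant quadratic forms is zero — this is a short representation-theoretic computation, using that the symmetric square of the standard $4$-dimensional representation of $\widehat G$ contains no trivial summand), the case $g=4$ is excluded, and likewise $C$ cannot lie on any quadric. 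A $G$-invariant curve of degree $6$ and genus $\leqslant 3$ not lying on a quadric is arithmetically Cohen–Macaulay and is cut out by cubics; a Riemann–Roch / Hilbert-function count shows $h^0(\mathbb{P}^3,\mathcal{I}_C(3))=4$ when $g=3$ (and is larger when $g<3$, but then $C$ would be forced onto a quadric or would be rational/elliptic of too-low degree to be a nondegenerate smooth sextic — one rules $g\leqslant 2$ out by the classification of space curves, e.g. a smooth elliptic sextic lies on a quadric, and a rational or genus-$2$ sextic in $\mathbb{P}^3$ would be degenerate or singular). Hence $C$ is a genus-$3$ sextic and $I_C$ is generated by a $4$-dimensional space $V$ of cubics.

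Next I would identify $V$ as one of the systems $\mathcal{M}_3^u$. The space $V\subset H^0(\mathbb{P}^3,\mathcal{O}(3))$ is a $G$-invariant $4$-dimensional subspace, so I would decompose the $20$-dimensional $G$-representation $H^0(\mathcal{O}_{\mathbb{P}^3}(3))$ (equivalently, as a $\widehat G$-representation, $\mathrm{Sym}^3$ of the standard representation) into irreducibles using the character table in \cite{Tim}, and list all $4$-dimensional $G$-subrepresentations. The point of the construction in Subsection~\ref{subsection:48-3} is exactly that the subspaces arising as $\mathcal{M}_3^u$ (cubics through the orbit $\Sigma_{16}^u$) sweep out a pencil's worth — parametrised by $u$ — of such subspaces, together with the two exceptional ones (for $u^4=-3$) whose base locus is an elliptic curve. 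I would argue that every $4$-dimensional $G$-invariant subspace of cubics is of the form $\mathcal{M}_3^u$ for some $u\in\mathbb{C}\cup\{\infty\}$: concretely, the generic such subspace contains a unique (up to scalar) cubic in each of the relevant isotypic pieces, and the base locus of a $4$-dimensional $G$-invariant linear system, being a $G$-invariant zero-dimensional or one-dimensional scheme, contains a short $G$-orbit; by Lemma~\ref{lemma:G-subgroups} the only short orbits available are $\Sigma_4,\Sigma_{12},\Sigma_{12}^\prime,\Sigma_{16},\Sigma_{16}^\prime,\Sigma_{16}^u,\Sigma_{24}^t$, and one checks (the orbits $\Sigma_4,\Sigma_{12},\Sigma_{12}^\prime$ impose too few, or the wrong, conditions, and $\Sigma_{24}^t$ imposes too many) that a $4$-dimensional invariant cubic system must be the one vanishing on some $\Sigma_{16}^u$. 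This reduces the problem to the already-completed case analysis: for $u^4=-3$ the base locus is one of the elliptic curves $C_4^{\pm}$, hence not our sextic; for $u$ outside the listed exceptional twelve values the base locus of $\mathcal{M}_3^u$ is zero-dimensional, hence not a curve at all; and for the twelve exceptional values the base locus is one of $C_6,C_6^\prime,C_6^{\prime\prime},C_6^{\prime\prime\prime}$ (the four genuinely distinct ones among the twelve, the others being projectively identified under $G_{192,185}$). Since $C$ is irreducible, smooth, and cut out by cubics, $C$ equals the base locus of its defining cubic system $V=\mathcal{M}_3^u$, so $C\in\{C_6,C_6^\prime,C_6^{\prime\prime},C_6^{\prime\prime\prime}\}$.

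The main obstacle I expect is the middle step: showing that \emph{every} $4$-dimensional $G$-invariant space of cubics is some $\mathcal{M}_3^u$, i.e. that the $\mathcal{M}_3^u$ (plus the two elliptic-curve degenerations) exhaust the relevant part of the Grassmannian of $G$-subrepresentations. This is really a statement about the structure of $\mathrm{Sym}^3$ of the standard $\widehat G$-representation — one needs to know its isotypic decomposition precisely and see that the $4$-dimensional invariant subspaces form exactly a $\mathbb{P}^1$ (the "$u$-line"), with the two $u^4=-3$ points distinguished by the jump in $h^0(\mathcal{I}_C(3))$. I would handle this by direct computation with the explicit matrices, ideally cross-checked in Magma/GAP as elsewhere in the paper, organising it around the base-locus dichotomy (zero-dimensional vs. positive-dimensional) so that the genus-$3$ sextic case is isolated cleanly. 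The remaining steps — Castelnuovo's bound, nonexistence of a $G$-invariant quadric, the Hilbert-function count, and the irreducibility/smoothness bookkeeping among the twelve curves — are routine given the earlier parts of the paper.
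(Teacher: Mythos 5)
There is a genuine gap, and it sits exactly where your argument tries to avoid using the group action. Your exclusion of low genus rests on false statements about space curves: a smooth elliptic sextic in $\mathbb{P}^3$ can \emph{never} lie on a quadric (smooth curves of degree $6$ on a smooth quadric have bidegree $(1,5)$, $(2,4)$ or $(3,3)$, hence genus $0$, $3$ or $4$, and the quadric cone gives only genus $4$), and smooth nondegenerate rational, elliptic and genus-$2$ sextics all exist in $\mathbb{P}^3$ (project the embedding by a complete degree-$6$ linear system from general points). So projective geometry alone does not rule out $g\leqslant 2$, and your Hilbert-function count does not force $g=3$. The paper gets the genus from the group: a curve of genus at most $4$ admitting a faithful action of $G\simeq\mumu_4^2\rtimes\mumu_3$ has $g=1$ or $g=3$ by the Breuer--Paulhus tables, and $g=1$ is then excluded because the intersection of $C$ with the $\mathfrak{A}_4$-invariant plane $\{x_3=0\}$ forces $C$ to contain one of the length-$12$ orbits $\Sigma_{12}$, $\Sigma_{12}^\prime$, while a faithful $G$-action on an elliptic curve has no orbit of length $12$. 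This use of the orbit structure is not optional decoration; it is the mechanism that pins down the genus.

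The step you yourself flag as the main obstacle --- that every $4$-dimensional $G$-invariant space of cubics with one-dimensional base locus is some $\mathcal{M}_3^u$ --- is also not resolved in your proposal, and carrying it out would require the full isotypic decomposition of $\mathrm{Sym}^3$ of the standard $\widehat{G}$-representation together with a check that no isolated invariant $4$-planes occur outside the $u$-line. The paper sidesteps this entirely: the same plane-section argument, combined with the signature $[0;3,3,4]$ of the $G$-action on a genus-$3$ curve, shows directly that $C$ contains $\Sigma_{16}^t$ for some $t\ne 0$ (and not $\Sigma_{16}$, $\Sigma_{16}^\prime$, $\Sigma_{16}^0$), whence the cubics through $C$ lie in $\mathcal{M}_3^t$; since $C$ is non-hyperelliptic (it must be the Fermat quartic) Homma's theorem gives that $\mathcal{M}_3$ is three-dimensional with base locus $C$, so $\mathcal{M}_3=\mathcal{M}_3^t$ and the base-locus analysis you correctly quote finishes the proof. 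Your final reduction to the twelve exceptional values of $u$ is fine, but the two bridges leading to it --- genus determination and identification of the cubic system --- both need the orbit-theoretic input you omitted.
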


\begin{proof}
Let $C$ be a~$G$-invariant irreducible smooth curve in $\mathbb{P}^3$ of degree $6$,
and let $g$ be its genus.
Then $g\leqslant 4$ by the~Castelnuovo bound.
Thus, it follows from \cite{Breuer,Paulhus} that either $g=1$, or $g=3$.

Note that $\Sigma_4\not\subset C$, because stabilizers in $G$ of points in $C$ are cyclic by \cite[Lemma~2.7]{FZ}.

Let $\Pi=\{x_3=0\}$,
and let $\Gamma$ be the~the stabilizer of this plane in $G$.
Then $\Gamma=\langle M,N,A\rangle\simeq\mathfrak{A}_{4}$,
and all $\Gamma$-orbits in $\Pi$ of length less than $12$ can be described as follows:
\begin{enumerate}[\normalfont(i)]
\item $\Sigma_{4}\cap\Pi$ is the~unique $\Gamma$-orbit of length $3$,
\item $\Sigma_{12}\cap\Pi$ is a~$\Gamma$-orbit of length $6$,
\item $\Sigma_{12}^\prime\cap\Pi$ is a~$\Gamma$-orbit of length $6$,
\item $\Sigma_{24}^t\cap\Pi$ is a~$\Gamma$-orbit of length $6$, where $0\ne t\ne\pm \sqrt{2}\pm\sqrt{2}i$,
\item $\Sigma_{16}\cap\Pi$, $\Sigma_{16}^\prime\cap\Pi$ and $\Sigma_{16}^0\cap\Pi$ are $\Gamma$-orbits of length $4$.
\end{enumerate}
Thus, since $\Sigma_4\not\subset C$, $C\not\subset\Pi$, and $\Pi\cdot C$ is a~$\Gamma$-invariant effective one-cycle of degree $6$,
we conclude that $C$ contains at least one of the~orbits $\Sigma_{12}$ or $\Sigma_{12}^\prime$,
and $C$ does not contain $\Sigma_{16}$, $\Sigma_{16}^\prime$ and $\Sigma_{16}^0$.

If $g=1$, it follows from \cite{Breuer,Paulhus} that $C$ does not contain $G$-orbits of length $12$, which gives $g=3$.
Then it follows from \cite{Breuer,Paulhus} that $C$ contains two $G$-orbits of length $16$,
so $\Sigma_{16}^t\subset C$ for some $t\ne 0$.

Using the~classification of automorphism groups of smooth curves of genus three \cite{DolgachevBook,Bars},
we see that the~curve $C$ is isomorphic to the~Fermat quartic curve in $\mathbb{P}^2$.
Hence, the~curve $C$ is not hyperelliptic.

Let $\mathcal{M}_3$ be the~linear subsystem in $|\mathcal{O}_{\mathbb{P}^3}(3)|$ that consists of all cubic surfaces passing through~$C$.
Then $\mathcal{M}_3$ is three-dimensional, and the~curve $C$ is its base locus by \cite{Homma}, because $C$ is not hyperelliptic.
Therefore, using the~notations introduced earlier, we see that $\mathcal{M}_3=\mathcal{M}_3^t$ for an appropriate $t\in\mathbb{C}$.
Now, arguing as above, we see that
$$
t\in\Bigg\{\frac{-1\pm \sqrt{3}}{2}+\frac{-1\pm \sqrt{3}}{2}i,\frac{-1\pm \sqrt{3}}{2}+\frac{1\mp \sqrt{3}}{2}i,
\frac{1\pm \sqrt{3}}{2}+\frac{1\pm \sqrt{3}}{2}i,\frac{1\mp \sqrt{3}}{2}+\frac{-1\pm \sqrt{3}}{2}i\Bigg\},
$$
which implies that $C$ is one of the~curves $C_6$, $C_6^\prime$,  $C_6^{\prime\prime}$, $C_6^{\prime\prime\prime}$ as claimed.
\end{proof}

\subsection{Curves over $\mathbb{Q}$ without rational points}
\label{subsection:Q}

Let us use notations introduced in Section~\ref{section:intro}.
Suppose, in addition, that
$$
C=\{x^{4}+xyz^{2}+y^{4}+y^{3}z-31yz^{3}+4z^{4}=0\}\subset\mathbb{P}^2_{x,y,z}.
$$
Then $C$ is smooth.
One can show that $C(\mathbb{Q})=\varnothing$ using the~reduction modulo $3$. Set
$$
P_1=[1-i:0:1], P_2=[1+i:0:1], P_3=[-1+i:0:1], P_4=[-1-i:0:1].
$$
and $D=3(P_3+P_4)-K_C$. Then $D$ is defined over $\mathbb{Q}$, and $D$ satisfies \eqref{equation:very-ample}.
Then $C_6$ is defined over~$\mathbb{Q}$, and it is isomorphic to $C$ over $\mathbb{Q}$.
In particular, the~curve $C_6$ does not contains $\mathbb{Q}$-rational points.
Hence, by Corollary~\ref{corollary:5}, the~smooth Fano threefold $X$ is K-stable.

One can explicitly find defining equations of $C_6$ as follows.
Let $\mathcal{M}$ be the~linear system of cubic curves in $\mathbb{P}^2$ whose general member is tangent to $C$
with multiplicity $3$ at the~points $P_1$ and $P_2$. Then
$$
\mathcal{M}\big\vert_{C}=3P_1+3P_2+|3(P_3+P_4)|.
$$
Thus, to compute the~embedding $C\hookrightarrow\mathbb{P}^3$, it is enough to find a~basis of the~linear system $\mathcal{M}$,
which can be done using linear algebra. After this, it is easy to find defining equations of the~curve $C_6$.

\subsection{Real pointless threefolds}
\label{subsection:pointless}

Now, we explain how to construct real smooth Fano threefolds in the~deformation family \textnumero 2.12 that do not have real points.
By Corollary~\ref{corollary:7}, all of them are K-stable. We start with

\begin{example}
\label{example:Severi-Brauer}
Let $U$ be a~three-dimensional Severi--Brauer variety defined over $\mathbb{R}$ such that $U\not\simeq\mathbb{P}^3_{\mathbb{R}}$.
Recall~from \cite{GilleSzamuely,Kollar} that $U$ exists, it is unique, and, in particular,
it is isomorphic to its dual variety.
Set $W=U\times U$. Then
$$
W_{\mathbb{C}}\simeq\mathbb{P}^3\times\mathbb{P}^3.
$$
Since $U\simeq U^\vee$, the~Picard group $\mathrm{Pic}_{\mathbb{R}}(W)$ contains a~real line bundle $L$ such that $L_{\mathbb{C}}$ has degree~$(1,1)$.
Let $V$ be any smooth complete intersection of three divisors in $|L|$.
Then $V$ is  a~smooth Fano threefold in the~family \textnumero 2.12,
and $V$ does not have real points, because $W$ does not have real points.
\end{example}

Let us present another, more explicit, construction of pointless real smooth Fano threefolds in the~deformation family \textnumero 2.12.
For a point $P=([x_0:x_1:x_2:x_3],[y_0:y_1:y_2:y_3])\in\mathbb{P}^3\times\mathbb{P}^3$, let us consider the~symmetric matrix
$$
A=\begin{pmatrix}
a_{00} & a_{01} & a_{02} & a_{03} &  \\
a_{01} & a_{11} & a_{12} & a_{13} &  \\
a_{02} & a_{12} & a_{22} & a_{23} &  \\
a_{03} & a_{13} & a_{23} & a_{33} &
\end{pmatrix}
$$
and the~skew-symmetric matrix
$$
B=\begin{pmatrix}
0 & b_{01} & b_{02} & b_{03} &  \\
-b_{01} & 0 & b_{12} & b_{13} &  \\
-b_{02} & -b_{12} & 0 & b_{23} &  \\
-b_{03} & -b_{13} & -b_{23} & 0 &
\end{pmatrix}
$$
defined (up to a common scalar multiple) as follows:
$$
a_{nm}=\frac{x_ny_m+x_my_n}{2}
$$
and
$$
b_{nm}=\frac{x_ny_m-x_my_n}{2i}
$$
for every  $n\in\{0,1,2,3\}$ and $m\in\{0,1,2,3\}$ such that $n\ne m$, and $a_{nn}=x_ny_n$ for each~$n\in\{0,1,2,3\}$.
Set $M=A+iB$. Then
$$
M=\begin{pmatrix}
x_0y_0 & x_0y_1 & x_0y_2 & x_0y_3\\
x_1y_0 & x_1y_1 & x_1y_2 & x_1y_3\\
x_2y_0 & x_2y_1 & x_2y_2 & x_2y_3\\
x_3y_0 & x_3y_1 & x_3y_2 & x_3y_3.
\end{pmatrix}
$$
Therefore, we see that the~constructed map $P\mapsto M$ gives us the~Serge embedding $\mathbb{P}^3\times\mathbb{P}^3\hookrightarrow\mathbb{P}^{15}$,
where we consider $\mathbb{P}^{15}$ as a projectivization of the~vector space of all $4\times 4$ matrices.

Now, we consider matrices $A$ and $B$ on their own, and we also assume that all $a_{ij}$ and $b_{ij}$ are real.
Then $M$ is a Hermitian $4\times 4$ matrix.
Projectivizing the~vector space of Hermitian $4\times 4$ matrices,
we obtain $\mathbb{P}^{15}_{\mathbb{R}}$ with coordinates $[a_{00}:a_{01}:\cdots:b_{13}:b_{23}]$.
Let us consider $M$ as a point in $\mathbb{P}^{15}_{\mathbb{R}}$, and set
$$
V=\big\{M\in \mathbb{P}^{15}_{\mathbb{R}}\ \big\vert\ \mathrm{rank}(M)\leqslant 1\big\}\subset \mathbb{P}^{15}_{\mathbb{R}}.
$$
Then $V$ is a real projective subvariety in $\mathbb{P}^{15}_{\mathbb{R}}$.
Moreover, over $\mathbb{C}$, the~subvariety $V_{\mathbb{C}}$ is the~image of the~map $P\mapsto M$ constructed above,
which implies that $V_{\mathbb{C}}\simeq\mathbb{P}^3\times\mathbb{P}^3$,
so $V$ is a form of $\mathbb{P}^3_{\mathbb{R}}\times\mathbb{P}^3_{\mathbb{R}}$.
But $V\not\simeq\mathbb{P}^3_{\mathbb{R}}\times\mathbb{P}^3_{\mathbb{R}}$ over $\mathbb{R}$,
because $V$ is the~Weil restriction of $\mathbb{P}^3$ over the~reals  \cite[Exercise 8.1.6]{GorchinskyShramov},
which implies that $V(\mathbb{R})\ne\varnothing$, and $\mathrm{Pic}_{\mathbb{R}}(V)$ is generated by the~class of a hyperplane section.

Now, let $H_1$, $H_2$, $H_3$ be three real hyperplane sections of $V\subset\mathbb{P}^{15}_{\mathbb{R}}$, and let $X=H_1\cap H_2\cap H_3$.
Suppose that $X$ is smooth and three-dimensional.
Then $X$ is a real form of a smooth Fano threefold in the~deformation family \textnumero 2.12 such that $\mathrm{Pic}_{\mathbb{R}}(X)=\mathbb{Z}[-K_X]$.
Moreover, Corollary~\ref{corollary:7} gives

\begin{corollary}
\label{corollary:7-real}
If $X$ does not have real points, then $X$ is K-stable
\end{corollary}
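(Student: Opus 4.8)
The plan is to obtain the statement as a direct consequence of Corollary~\ref{corollary:7}, so the real content is just verifying that its hypotheses are met. First I would record what the two paragraphs preceding the statement have already established: the complexification $V_{\mathbb{C}}$ of the real fourfold $V\subset\mathbb{P}^{15}_{\mathbb{R}}$ is the Segre image of $\mathbb{P}^3\times\mathbb{P}^3$, so that $X_{\mathbb{C}}=(H_1)_{\mathbb{C}}\cap(H_2)_{\mathbb{C}}\cap(H_3)_{\mathbb{C}}$ is a smooth complete intersection of three divisors of degree $(1,1)$ in $\mathbb{P}^3\times\mathbb{P}^3$; by the description recalled around \eqref{equation:P3-P3} in Section~\ref{section:intro}, any such smooth threefold belongs to the deformation family \textnumero 2.12. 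Since $V$, the hyperplane sections $H_i$, and hence $X=H_1\cap H_2\cap H_3$ are all defined over the subfield $\mathbb{R}\subset\mathbb{C}$, the variety $X$ is a smooth Fano threefold in family \textnumero 2.12 defined over a subfield of $\mathbb{C}$.

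Given the real-pointlessness hypothesis $X(\mathbb{R})=\varnothing$, Corollary~\ref{corollary:7} now applies verbatim with $\Bbbk=\mathbb{R}$ and yields that $X$ is K-stable, which is exactly the claim. If one prefers a self-contained argument rather than a black-box citation of Corollary~\ref{corollary:7}, one can unwind it: were $X$ not K-stable, \cite[Corollary~4.14]{Zhuang} would produce a prime divisor $\mathbf{F}$ over $X_{\mathbb{C}}$ with $\beta(\mathbf{F})\leqslant 0$ whose center on $X_{\mathbb{C}}$ is stable under $\mathrm{Gal}(\mathbb{C}/\mathbb{R})$; by Theorem~A that center is a single point lying in $E\cap E^\prime$, and a Galois-stable point of $X$ is an $\mathbb{R}$-point, contradicting $X(\mathbb{R})=\varnothing$.

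I do not expect any genuine obstacle here: Theorem~A and \cite[Corollary~4.14]{Zhuang} are already available, the identification of $X_{\mathbb{C}}$ as a member of family \textnumero 2.12 is classical and was noted above, and the hypothesis is used directly. The only point deserving a line of care is the (standard) fact that a smooth three-dimensional complete intersection of three $(1,1)$-forms in $\mathbb{P}^3\times\mathbb{P}^3$ is automatically of type \textnumero 2.12, with no degenerate possibilities once smoothness is imposed; this is what licenses the passage from "$X$ is a real form of \emph{some} smooth member of the family" to the applicability of the results of the paper.
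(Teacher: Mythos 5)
Your proposal is correct and matches the paper's argument: the paper derives Corollary~\ref{corollary:7-real} by observing that $X$ is a real form of a smooth member of family \textnumero 2.12 and then invoking Corollary~\ref{corollary:7} with $\Bbbk=\mathbb{R}$, exactly as you do. Your extra remarks (identifying $X_{\mathbb{C}}$ as a smooth $(1,1)^3$ complete intersection in $\mathbb{P}^3\times\mathbb{P}^3$ and unwinding Corollary~\ref{corollary:7} via Theorem~A and \cite[Corollary~4.14]{Zhuang}) are consistent with what the paper has already established in the surrounding text.
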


Such smooth Fano threefolds without real points do exists:

\begin{example}
\label{example:pointless}
Suppose that $H_1$ is cut out by $a_{00}+a_{11}+a_{22}+a_{33}=0$.
Then $H_1$ is smooth, because its preimage in $\mathbb{P}^3\times\mathbb{P}^3$ via the~map constructed above is given~by
$$
x_0y_0+x_1y_1+x_2y_2+x_3y_3=0.
$$
Moreover, the~fivefold $H_1$ does not have real points.
Indeed, if $M\in V$, then the~corresponding real numbers $a_{00}$, $a_{11}$, $a_{22}$, $a_{33}$ are either all non-negative or all non-positive,
and they cannot be all zero.
Similarly, set $H_2=\{a_{03}+2a_{12}=0\}\cap V$ and $H_3=\{a_{02}+a_{13}+a_{23}=0\}\cap V$.
Then $V_{\mathbb{C}}$ is isomorphic to the~complete intersection in $\mathbb{P}^3\times\mathbb{P}^3$ given by
$$
\left\{\aligned
&x_0y_0+x_1y_1+x_2y_2+x_3y_3=0,\\
&x_0y_3 + x_3y_0 +2x_1y_2 + 2x_2y_1=0,\\
&x_0y_2 + x_2y_0 + x_3y_1 + x_1y_3+x_2y_3+x_3y_2=0.
\endaligned
\right.
$$
This complete intersection is a smooth threefold, so $X$ is smooth, and it has no real points, because the~divisor $H_1$ does not have real points.
\end{example}

\section{The proof of Theorem A}
\label{section:proof}

Let us use all notations and assumptions introduced in Section~\ref{section:intro}.
To start with, let us present few results from \cite{AbbanZhuang,Book} that will be used in the~proof of Theorem~A.
Let $\mathbf{F}$ be a~prime divisor over~$X$,
and let $Z$ be its center on~$X$. Suppose that
\begin{itemize}
\item either $Z$ is a~point,
\item or $Z$ is an irreducible curve.
\end{itemize}
Let $P$ be any point in $Z$. Choose an~irreducible smooth surface $S\subset X$ such that $P\in S$. Set
$$
\tau=\mathrm{sup}\Big\{u\in\mathbb{Q}_{\geqslant 0}\ \big\vert\ \text{the divisor  $-K_X-uS$ is pseudo-effective}\Big\}.
$$
For~$u\in[0,\tau]$, let $P(u)$ be the~positive part of the~Zariski decomposition of the~divisor $-K_X-uS$,
and let $N(u)$ be its negative part. Then $\beta(S)=1-S_X(S)$, where
$$
S_X(S)=\frac{1}{-K_X^3}\int\limits_{0}^{\infty}\mathrm{vol}\big(-K_X-uS\big)du=\frac{1}{20}\int\limits_{0}^{\tau}P(u)^3du.
$$
Let us show how to compute $P(u)$ and $N(u)$. Set $H=\pi^*(\mathcal{O}_{\mathbb{P}^3}(1))$ and $H^\prime=(\pi^\prime)^*(\mathcal{O}_{\mathbb{P}^3}(1))$. Then
$$
H\sim 3H^\prime-E^\prime, E\sim 8H^\prime-3E^\prime, H^\prime\sim 3H-E, E^\prime\sim 8H-3E,
$$
where $E$ and $E^\prime$ are exceptional surfaces of the~blow ups $\pi$ and $\pi^\prime$, respectively.

\begin{example}
\label{example:AZ-surfaces-H}
Suppose that $S\in |H|$. Then $\tau=\frac{4}{3}$. Moreover, we have
$$
P(u)\sim_{\mathbb{R}} \left\{\aligned
&(4-u)H-E \ \text{ for } 0\leqslant u\leqslant 1, \\
&(4-3u)H^\prime\ \text{ for } 1\leqslant u\leqslant \frac{4}{3},
\endaligned
\right.
$$
and
$$
N(u)= \left\{\aligned
&0\ \text{ for } 0\leqslant u\leqslant 1, \\
&(u-1)E^\prime\ \text{ for } 1\leqslant u\leqslant \frac{4}{3},
\endaligned
\right.
$$
which gives
$S_{X}(S)=\frac{1}{20}\int\limits_{0}^{\frac{4}{3}}\big(P(u)\big)^3du=\frac{1}{20}\int\limits_0^{1}(2-u)(u^2-10u+10)du+\frac{1}{20}\int\limits_{1}^{\frac{4}{3}}(4-3u)^3du=\frac{53}{120}$.
\end{example}

\begin{example}
\label{example:AZ-surfaces-E}
Suppose that $S=E$. Then $\tau=\frac{1}{2}$,
$$
P(u)\sim_{\mathbb{R}} \left\{\aligned
&4H-(1+u)E \ \text{ for } 0\leqslant u\leqslant \frac{1}{3}, \\
&(4-8u)H^\prime\ \text{ for } \frac{1}{3}\leqslant u\leqslant \frac{1}{2},
\endaligned
\right.
$$
and
$$
N(u)=\left\{\aligned
&0\ \text{ for } 0\leqslant u\leqslant \frac{1}{3}, \\
&(3u-1)E^\prime\ \text{ for } \frac{1}{3}\leqslant u\leqslant \frac{1}{2},
\endaligned
\right.
$$
which gives
$S_{X}(S)=\frac{1}{20}\int\limits_0^{\frac{1}{2}}4(1-u)(5-7u^2-10u)du+\frac{1}{20}\int\limits_{\frac{1}{2}}^{\frac{1}{3}}64(1-2u)^3du=\frac{11}{60}$.
\end{example}

Now, we choose an~irreducible curve $C\subset S$ that contains the~point $P$.
For instance, if $Z$ is a~curve, and $S$ contains $Z$, then we can choose $C=Z$.
Since $S\not\subset\mathrm{Supp}(N(u))$,  we can write
$$
N(u)\big\vert_S=d(u)C+N^\prime(u),
$$
where $d(u)=\mathrm{ord}_C(N(u)\vert_S)$, and $N^\prime(u)$ is an~effective $\mathbb{R}$-divisor on $S$ such that $C\not\subset\mathrm{Supp}(N^\prime(u))$.
Now, for every $u\in [0,\tau]$, we set
$$
t(u)=\sup\Big\{v\in \mathbb R_{\geqslant 0} \ \big|\ \text{the divisor $P(u)\big|_S-vC$ is pseudo-effective}\Big\}.
$$
For $v\in [0, t(u)]$, we let $P(u,v)$ be the~positive part of the~Zariski decomposition of $P(u)|_S-vC$,
and we let $N(u,v)$ be its negative part. Following \cite{AbbanZhuang,Book}, we let
$$
 S\big(W^S_{\bullet,\bullet};C\big)=\frac{3}{(-K_X)^3}\int\limits_0^{\tau}d(u)\Big(P(u)\big\vert_{S}\Big)^2du+\frac{3}{(-K_X)^3}\int\limits_0^\tau\int\limits_0^{\infty}\mathrm{vol}\big(P(u)\big\vert_{S}-vC\big)dvdu,
$$
which we can rewrite as
$$
 S\big(W^S_{\bullet,\bullet};C\big)=\frac{3}{(-K_X)^3}\int\limits_0^{\tau}d(u)\Big(P(u)\big\vert_{S}\Big)^2du+\frac{3}{(-K_X)^3}\int\limits_0^\tau\int\limits_0^{t(u)}\big(P(u,v)\big)^2dvdu.
$$
If $Z$ is a~curve, $Z\subset S$ and $C=Z$, then it follows from \cite{AbbanZhuang,Book} that
\begin{equation}
\label{equation:Kento-curve}
\frac{A_X(\mathbf{F})}{S_X(\mathbf{F})}\geqslant\min\Bigg\{\frac{1}{S_X(S)},\frac{1}{S\big(W^S_{\bullet,\bullet};C\big)}\Bigg\}.
\end{equation}

Let $f\colon\widetilde{S}\to S$ be the~blow up of the~point $P$,
let $F$ be the~$f$-exceptional curve, let $\widetilde{N}^\prime(u)$~be~the~strict transform on $\widetilde{S}$ of the~$\mathbb{R}$-divisor $N(u)\vert_{S}$,
and let $\widetilde{d}(u)=\mathrm{mult}_P(N(u)\vert_S)$. Then
$$
f^*\big(N(u)\big\vert_S\big)=\widetilde{d}(u)F+\widetilde{N}^\prime(u).
$$
For every $u\in [0,\tau]$, set
$$
\widetilde{t}(u)=\sup\Big\{v\in \mathbb R_{\geqslant 0} \ \big|\ \text{the divisor $f^*\big(P(u)\big|_S\big)-vF$ is pseudo-effective}\Big\}.
$$
For $v\in [0, \widetilde{t}(u)]$, we let $\widetilde{P}(u,v)$ be the~positive part of the~Zariski decomposition of $f^*(P(u)|_S)-vF$,
and we let $\widetilde{N}(u,v)$ be its negative part. Let
$$
S\big(W^S_{\bullet,\bullet};F\big)=\frac{3}{(-K_X)^3}\int\limits_0^{\tau}\widetilde{d}(u)\Big(f^*\big(P(u)\big|_S\big)\Big)^2du+
\frac{3}{(-K_X)^3}\int\limits_0^\tau\int\limits_0^{\infty}\mathrm{vol}\big(f^*\big(P(u)\big|_S\big)-vF\big)dvdu.
$$
Then
$$
S\big(W^S_{\bullet,\bullet};F\big)=\frac{3}{(-K_X)^3}\int\limits_0^{\tau}\widetilde{d}(u)\big(\widetilde{P}(u,0)\big)^2du+
\frac{3}{20}\int\limits_0^\tau\int\limits_0^{\widetilde{t}(u)}\big(\widetilde{P}(u,v)\big)^2dvdu.
$$
For every point $O\in F$, we let
$$
S\big(W_{\bullet, \bullet,\bullet}^{\widetilde{S},F};O\big)=\frac{3}{(-K_X)^3}\int\limits_0^\tau\int\limits_0^{\widetilde{t}(u)}\big(\widetilde{P}(u,v)\cdot F\big)^2dvdu+F_O\big(W_{\bullet,\bullet,\bullet}^{\widetilde{S},F}\big)
$$
for
$$
F_O\big(W_{\bullet,\bullet,\bullet}^{\widetilde{S},F}\big)=\frac{6}{(-K_X)^3}
\int\limits_0^\tau\int\limits_0^{\widetilde{t}(u)}\big(\widetilde{P}(u,v)\cdot F\big)\cdot \mathrm{ord}_O\big(\widetilde{N}^\prime(u)\big|_F+\widetilde{N}(u,v)\big|_F\big)dvdu.
$$
Then it follows from \cite{AbbanZhuang,Book} that
\begin{equation}
\label{equation:Kento-point}
\frac{A_{X}(\mathbf{F})}{S_X(\mathbf{F})}\geqslant
\min\Bigg\{\frac{1}{S_X({S})},\frac{2}{S\big(W^{S}_{\bullet,\bullet};F\big)},\inf_{O\in F}\frac{1}{S\big(W_{\bullet, \bullet,\bullet}^{\widetilde{S},F};O\big)}\Bigg\}.
\end{equation}
Thus, if $S_X({S})<1$, $S(W^{S}_{\bullet,\bullet};F)<2$
and $S(W_{\bullet, \bullet,\bullet}^{\widetilde{S},F};O)<1$ for every point $O\in F$, then $\beta(\mathbf{F})>0$.

Now, we are ready to prove Theorem A.
We must show that $\beta(\mathbf{F})>0$ if $Z$ is not a~point in $E\cap E^\prime$.
If $Z$ is a~surface, it follows from \cite{Fujita2016} that $\beta(\mathbf{F})>0$.
Hence, we may assume that $Z$ is \textbf{not} a~surface.

\begin{lemma}[{cf. \cite{CheltsovPokora}}]
\label{lemma:E}
Suppose that $Z$ is a~curve, $Z\subset E$, and $\pi(Z)$ is not a~point. Then $\beta(\mathbf{F})>0$.
\end{lemma}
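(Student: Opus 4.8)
The plan is to apply the Abban--Zhuang machinery recorded above, using the surface $S = E$ together with the curve $C = Z$ inside $E$, and to derive a contradiction from the inequality \eqref{equation:Kento-curve} if $\beta(\mathbf{F})\leqslant 0$. Concretely, since $Z\subset E$ and $\pi(Z)$ is a curve, $Z$ is the strict transform of a section-like curve over $\pi(Z)\subset C_6$; in fact $E\to C_6$ is a $\mathbb{P}^1$-bundle (projectivized normal bundle), so $Z$ maps onto $C_6$ and $Z$ is a multisection of $E\to C_6$. First I would recall from Example~\ref{example:AZ-surfaces-E} the Zariski decomposition of $-K_X - uE$ for $u\in[0,\tfrac12]$, so that $P(u)|_E$ and $N(u)|_E$ are explicit: for $u\le \tfrac13$ we have $N(u)=0$, and for $\tfrac13\le u\le\tfrac12$ we have $N(u) = (3u-1)E'$, so $N(u)|_E = (3u-1)(E'|_E)$. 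The curve $E\cap E'$ is a known curve on $E$ (a trisecant-type curve), and since $\pi(Z)$ is not a point while $E\cap E'$ maps to finitely many points under $\pi$ — or at any rate $Z\ne E\cap E'$ as $1$-cycles unless $Z$ happens to be a component of $E\cap E'$ — the coefficient $d(u) = \mathrm{ord}_Z(N(u)|_E)$ is either $0$ (if $Z$ is not contained in $E\cap E'$) or $(3u-1)$ times the relevant multiplicity. I would first handle the generic case $d(u)\equiv 0$, and treat $Z\subset E\cap E'$ separately (note this is excluded anyway if we also know $Z$ is not a point of $E\cap E'$, but as a curve it could still lie in $E\cap E'$; however then $\pi(Z)$ would be a point, contradicting the hypothesis).

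Next I would compute $S_X(E) = \tfrac{11}{60} < 1$ directly from Example~\ref{example:AZ-surfaces-E}, so the first term in the minimum of \eqref{equation:Kento-curve} is harmless. The work is then to bound $S(W^E_{\bullet,\bullet};Z)$ and show it is $<1$. With $d(u)=0$ this reduces to $S(W^E_{\bullet,\bullet};Z) = \tfrac{3}{20}\int_0^{1/2}\int_0^{t(u)} \mathrm{vol}(P(u)|_E - vZ)\,dv\,du$. To evaluate this I need the intersection theory on the surface $E$: writing $E$ as $\mathbb{P}(\mathcal{N})$ over $C_6$ with the tautological class and fiber class, I would express $P(u)|_E$ in that basis (using $H|_E$, $E|_E$ computed via adjunction and the fact that $H|_E$ is the pullback of a degree-$6$ divisor on $C_6$ plus a multiple of the tautological class, while $E|_E = -C_0$ in the usual ruled-surface notation), then express the class of $Z$ — here I only need $Z\cdot f$ (the degree of $Z$ over $C_6$, a positive integer) and $Z^2$, or rather a lower bound for how much of $P(u)|_E$ must be subtracted. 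The pseudo-effective threshold $t(u)$ and the volume are then piecewise polynomial in $u,v$, and the integral is an explicit rational number depending on the numerical class of $Z$; the point is that for every admissible class it comes out $<1$. I would organize this as: (a) determine the Mori cone / pseudo-effective cone of $E$; (b) parametrize possible classes of an irreducible curve $Z$ with $\pi(Z)$ a curve (so $Z$ is not a fiber, hence $Z\cdot f\ge 1$); (c) bound the double integral uniformly.

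The main obstacle I expect is step (b)--(c): controlling $S(W^E_{\bullet,\bullet};Z)$ \emph{uniformly} over all irreducible multisections $Z$ of $E\to C_6$. A single bad class could in principle make the integral large, so I would want a monotonicity argument — vol$(P(u)|_E - vZ)$ is smaller when $Z$ is "bigger" (e.g. larger $Z^2$ or larger degree over $C_6$), so the sup of the integral over admissible $Z$ is attained at the "smallest" such curve, which should be a section ($Z\cdot f = 1$) with $Z^2$ as negative as the ruled surface allows; for that extremal class the computation is a finite check. A secondary subtlety is the exact value of $t(u)$: one must confirm that $P(u)|_E - t(u)Z$ is genuinely on the boundary of the pseudo-effective cone and identify its Zariski decomposition, which may have a nontrivial negative part contributing a further piecewise term to the integral; I would handle this by writing down the Zariski chamber decomposition of $P(u)|_E$ on the ruled surface $E$ explicitly. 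If the uniform bound proves awkward, the fallback is to intersect with a well-chosen surface $S\in|H|$ containing $Z$ instead (using Example~\ref{example:AZ-surfaces-H} and \eqref{equation:Kento-curve}), since $\pi(Z)$ being a curve means $Z$ lies on some hyperplane pullback, and $S_X(H) = \tfrac{53}{120}<1$; but I expect the $S=E$ route to be cleaner because the geometry of $Z$ inside the ruled surface $E$ is so constrained.
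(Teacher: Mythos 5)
Your overall strategy is the one the paper uses: apply \eqref{equation:Kento-curve} with the flag $S=E$, $C=Z$, work on the ruled surface $E\to C_6$ in the basis $C_0,\ell$, and reduce the volume integral to the extremal numerical classes of $Z$ (the paper reduces to $C_0$ when $b\geqslant 0$ and to $2C_0-2\ell$ when $b<0$, using $Z\equiv aC_0+b\ell$ with $a\geqslant 1$, $b\geqslant ae$, and the constraints $e\in\{-2,0,2,4,6,8\}$ coming from $E^3=-28$, Nagata's bound, and nefness of $H^\prime$). However, there is one genuine error in your plan. You dismiss the case $Z\subset E\cap E^\prime$ by claiming that a curve component of $E\cap E^\prime$ would have $\pi(Z)$ a point. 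That is false: on $E$ one has $E^\prime\vert_{E}\equiv 3C_0+\tfrac{12+3e}{2}\ell$, so $E^\prime\vert_E\cdot\ell=3$ and $E\cap E^\prime$ necessarily has components that dominate $C_6$ (it is, up to fiber components, a trisection of the ruling); $Z$ may well be one of them. In that case $d(u)=\mathrm{ord}_Z(N(u)\vert_E)$ is not zero for $\tfrac13\leqslant u\leqslant\tfrac12$, and you cannot drop the first term of $S(W^E_{\bullet,\bullet};Z)$. The paper handles this by bounding $\mathrm{ord}_Z(E^\prime\vert_E)\leqslant 3$, hence $d(u)\leqslant 3(3u-1)$, which contributes an extra $\tfrac{2}{45}$ to the estimate; the final totals ($\tfrac{23e}{1440}+\tfrac{221}{360}$ for $e\leqslant 8$, resp.\ $\tfrac{79}{240}$) still stay below $1$, so the gap is repairable, but as written your argument omits a term that is genuinely present.

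A secondary problem: your fallback of choosing $S\in|H|$ containing $Z$ cannot work, because $\pi(Z)=C_6$ (as $Z$ dominates the irreducible curve $C_6$) and $C_6$ is nondegenerate in $\mathbb{P}^3$, so $Z$ lies on no pullback of a plane. Also, be aware that the uniform control you worry about in steps (b)--(c) really does require pinning down the invariant $e$ of the ruled surface: the paper's bound degrades linearly in $e$, and the inequality $e\leqslant 8$ (from $H^\prime\cdot C_0\geqslant 0$) is what makes the estimate close.
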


\begin{proof}
Let $e$ be the~invariant of the~ruled surface $E$ defined in Proposition 2.8 in \cite[Chapter V]{Hartshorne}.
Then $e\geqslant -3$ \cite{Nagata}. Moreover, there exists a~section $C_{0}$ of the~projection $E\to C_6$ such that $C_0^2=-e$.
Let $\ell$ a~fiber of this projection.
Then $H\vert_{E}\equiv 6\ell$ and $E\vert_{E}\equiv -C_{0} + \lambda \ell$ for some integer $\lambda$. Since
$$
-28=-c_{1}\big(N_{C_6/\mathbb{P}^{3}}\big)=E^{3}=(-C_{0}+\lambda \ell)^{2}=-e-2\lambda,
$$
we get $\lambda = \frac{28-e}{2}$, so $e$ is even and $e\geqslant -2$.
Since $H^\prime$ is nef and $H^\prime\vert_{E}\equiv C_{0}+(18-\lambda)\ell$, we get
$$
0\leqslant H^\prime\cdot C_0=\big(C_{0}+(18-\lambda)\ell\big)\cdot C_{0}=\frac{8-e}{2},
$$
which implies that $e\leqslant 8$. Thus, we see that $e\in\{-2,0,2,4,6,8\}$.

Set $S=E$ and $C=Z$.
Let us estimate $S(W^S_{\bullet,\bullet};C)$.
It follows from Example~\ref{example:AZ-surfaces-E} that $\tau=\frac{1}{2}$ and
$$
P(u)\big\vert_{S}\equiv\left\{\aligned
&(1+u)C_{0}+\frac{20+e+ue-28u}{2}\ell \ \text{ for } 0\leqslant u \leqslant \frac{1}{3}, \\
&(4-8u)C_{0}+2(1-2u)(8+e)\ell \ \text{ for } \frac{1}{3} \leqslant u\leqslant \frac{1}{2}.
\endaligned
\right.
$$
If $0\leqslant u \leqslant \frac{1}{2}$, then $N(u)=0$.
If $\frac{1}{2} \leqslant u\leqslant \frac{1}{3}$, then $N(u)\big\vert_{S}=(3u-1)E^\prime\vert_{S}$,
where $E^\prime\vert_{S}\equiv 3C_0+\frac{12+3e}{2}\ell$.
By Proposition 2.20 in \cite[Chapter~V]{Hartshorne}, we have $Z\equiv aC_{0}+b\ell$
for integers $a$ and $b$  such that $a\geqslant 0$ and $b\geqslant ae$.
Since $\pi(Z)$ is not a~point, we have $a\geqslant 1$.  Then $\mathrm{ord}_{C}(E^\prime\vert_{S})\leqslant 3$.
Hence, if $\frac{1}{3} \leqslant u\leqslant \frac{1}{2}$, then $d(u)\leqslant 3(3u-1)$.
This gives
\begin{multline*}
S(W_{\bullet,\bullet}^{{S}};{C})=
\frac{3}{20}\int\limits_{\frac{1}{3}}^{\frac{1}{2}}128(2u-1)^2d(u)du+
\frac{3}{20}\int\limits_0^{\frac{1}{2}}\int\limits_0^\infty \mathrm{vol}\big(P(u)\big\vert_{{S}}-v{C}\big)dvdu\leqslant \\
\leqslant\frac{3}{20}\int\limits_{\frac{1}{3}}^{\frac{1}{2}}384(3u-1)(2u-1)^2du+
\frac{3}{20}\int\limits_0^{\frac{1}{2}}\int\limits_0^\infty \mathrm{vol}\big(P(u)\big\vert_{{S}}-v{C}\big)dvdu = \\
=\frac{2}{45}+\frac{3}{20}\int\limits_0^{\frac{1}{2}}\int\limits_0^\infty \mathrm{vol}\big(P(u)\big\vert_{{S}}-v{C}\big)dvdu
=\frac{2}{45}+\frac{3}{20}\int\limits_0^{\frac{1}{2}}\int\limits_0^\infty \mathrm{vol}\big(P(u)\big\vert_{{S}}-v(aC_{0} + b\ell)\big)dvdu.
\end{multline*}
Thus, we conclude that
$S(W_{\bullet,\bullet}^{{S}};{C})\leqslant\frac{2}{45}+\frac{3}{20}\int\limits_0^{\frac{1}{2}}\int\limits_0^\infty \mathrm{vol}\big(P(u)\big\vert_{{S}}-v(aC_{0} + b\ell)\big)dvdu$.

Suppose that $b\geqslant 0$. Then
$$
\frac{3}{20}\int\limits_0^{\frac{1}{2}}\int\limits_0^\infty \mathrm{vol}\big(P(u)\big\vert_{{S}}-v(aC_{0} + b\ell)\big)dvdu\leqslant
\frac{3}{20}\int\limits_0^{\frac{1}{2}}\int\limits_{0}^{\infty}\mathrm{vol}\big(P(u)\vert_{S}-vC_0\big)dvdu.
$$
On the~other hand, we have
$$
P(u)\big\vert_{S}-vC_0\equiv\left\{\aligned
&(1+u-v)C_{0}+\frac{20+e+ue-28u}{2}\ell \ \text{if $0\leqslant u\leqslant \frac{1}{3}$}, \\
&(4-8u-v)C_{0}+2(1-2u)(8+e)\ell\ \text{if $\frac{1}{3}\leqslant u\leqslant \frac{1}{2}$}.
\endaligned
\right.
$$
Hence, if $0\leqslant u\leqslant \frac{1}{3}$, then the~divisor $P(u)\big\vert_{S}-vC_0$ is pseudoeffective $\iff$ it is nef $\iff$ $v\leqslant 1+u$.
Likewise, if $\frac{1}{3}\leqslant u\leqslant \frac{1}{2}$, then $P(u)\big\vert_{S}-vC_0$ is pseudoeffective $\iff$ it is nef $\iff$ $v\leqslant 4-8u$.
Then
\begin{multline*}
\quad\quad S(W_{\bullet,\bullet}^{{S}};{C})\leqslant\frac{2}{45}+\frac{3}{20}\int\limits_0^{\frac{1}{2}}\int\limits_{0}^{\infty}\mathrm{vol}\big(P(u)\vert_{S}-vC_0\big)dvdu=\\
=\frac{2}{45}+\frac{3}{20}\int\limits_{0}^{\frac{1}{3}}\int\limits_{0}^{1+u}\Bigg((1+u-v)C_0+\frac{20+e+ue-28u}{2}\ell\Bigg)^2dvdu+\\
+\frac{3}{20}\int\limits_{\frac{1}{3}}^{\frac{1}{2}}\int\limits_{0}^{4-8u}\big((4-8u-v)C_0+2(1-2u)(8+e)\ell\big)^2dvdu=\frac{23e}{1440}+\frac{221}{360}<1,\quad\quad\quad\quad
\end{multline*}
because $e\leqslant 8$. Then $\beta(\mathbf{F})>0$ by \eqref{equation:Kento-curve}, since we know from Example~\ref{example:AZ-surfaces-E} that $S_X(S)<1$.

Thus, to complete the~proof, we may assume that $b<0$. Then $e<0$, so that $e=-2$, since $b\geqslant ae$.
Hence, it follows from Proposition 2.21 in \cite[Chapter~V]{Hartshorne} that $a\geqslant 2$ and $b\geqslant -a$.
Then
$$
\frac{3}{20}\int\limits_0^{\frac{1}{2}}\int\limits_{0}^{\infty}\mathrm{vol}\big(P(u)\vert_{S}-vC\big)dvdu\leqslant
\frac{3}{20}\int\limits_0^{\frac{1}{2}}\int\limits_{0}^{\infty}\mathrm{vol}\big(P(u)\vert_{S}-v(2C_0-2\ell)\big)dvdu.
$$
Moreover, arguing as above, we compute
$$
\frac{3}{20}\int\limits_0^{\frac{1}{2}}\int\limits_{0}^{\infty}\mathrm{vol}\big(P(u)\vert_{S}-v(2C_0-2\ell)\big)dvdu=\frac{41}{144},
$$
which gives $S(W_{\bullet,\bullet}^{S};C)\leqslant\frac{2}{45}+\frac{41}{144}=\frac{79}{240}<1$,
so that $\beta(\mathbf{F})>0$ by \eqref{equation:Kento-curve}.
\end{proof}

Similarly, we prove that

\begin{lemma}
\label{lemma:E-prime}
Suppose that $Z$ is a~curve, $Z\subset E^\prime$, and $\pi^\prime(Z)$ is not a~point. Then $\beta(\mathbf{F})>0$.
\end{lemma}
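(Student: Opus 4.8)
The plan is to deduce this lemma from Lemma~\ref{lemma:E} by symmetry of the Sarkisov link \eqref{equation:diagram}. Interchanging $\pi\leftrightarrow\pi'$, $H\leftrightarrow H'$, $E\leftrightarrow E'$ and $C_6\leftrightarrow C_6'$ fixes all of the numerical data used in the proof of Lemma~\ref{lemma:E}: the relations $H\sim 3H'-E'$, $E\sim 8H'-3E'$, $H'\sim 3H-E$, $E'\sim 8H-3E$ are symmetric, $(-K_X)^3=20$ is unchanged, and $C_6'$ is again a smooth curve of degree $6$ and genus $3$, so that $(E')^3=-c_1(N_{C_6'/\mathbb{P}^3})=-28$, exactly as $E^3=-28$, and Nagata's bound gives $e'\geq -3$ for the invariant $e'$ of the ruled surface $E'\to C_6'$. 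In particular Example~\ref{example:AZ-surfaces-E} holds verbatim with $S=E'$ and the primes interchanged: $\tau=\tfrac12$, $P(u)\sim_{\mathbb{R}}4H'-(1+u)E'$ on $[0,\tfrac13]$ and $P(u)\sim_{\mathbb{R}}(4-8u)H$ on $[\tfrac13,\tfrac12]$, with $N(u)=0$ for $u\leq\tfrac13$ and $N(u)=(3u-1)E$ for $\tfrac13\leq u\leq\tfrac12$, and $S_X(E')=\tfrac{11}{60}<1$.

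Granting this, one repeats the proof of Lemma~\ref{lemma:E} with $E'$ in place of $E$. As there, $e'\in\{-2,0,2,4,6,8\}$, there is a section $C_0'$ with $(C_0')^2=-e'$ and a fibre $\ell'$ for which $H'|_{E'}$, $E'|_{E'}$, $H|_{E'}$ and $E|_{E'}$ obey the same numerical relations (with $e'$ in place of $e$) that $H|_E$, $E|_E$, $H'|_E$ and $E'|_E$ obey in Lemma~\ref{lemma:E}. Writing $Z\equiv aC_0'+b\ell'$ with $a\geq 1$ (because $\pi'(Z)$ is not a point) and $b\geq ae'$, the bound $\mathrm{ord}_Z(E|_{E'})\leq 3$ gives $d(u)\leq 3(3u-1)$ on $[\tfrac13,\tfrac12]$, and the identical chain of inequalities yields
$$S\big(W^{E'}_{\bullet,\bullet};Z\big)\leq\frac{2}{45}+\frac{3}{20}\int_0^{1/2}\int_0^{\infty}\mathrm{vol}\big(P(u)|_{E'}-v(aC_0'+b\ell')\big)\,dv\,du.$$
If $b\geq 0$, replacing $aC_0'+b\ell'$ by $C_0'$ and computing as in Lemma~\ref{lemma:E} gives $S(W^{E'}_{\bullet,\bullet};Z)\leq\tfrac{23e'}{1440}+\tfrac{221}{360}<1$ since $e'\leq 8$. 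If $b<0$, then $e'=-2$, $a\geq 2$ and $b\geq -a$, and replacing $aC_0'+b\ell'$ by $2C_0'-2\ell'$ gives $S(W^{E'}_{\bullet,\bullet};Z)\leq\tfrac{2}{45}+\tfrac{41}{144}=\tfrac{79}{240}<1$. In either case \eqref{equation:Kento-curve}, together with $S_X(E')<1$, forces $\beta(\mathbf{F})>0$.

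The only step needing genuine care is the symmetry assertion of the first paragraph --- that $\pi'$ is, like $\pi$, the blow-up of a smooth sextic of genus $3$ (hence $(E')^3=-28$ and $e'\geq -3$), and that the linear-equivalence relations among $H,H',E,E'$ are invariant under the swap $\pi\leftrightarrow\pi'$. Both facts are recorded in Section~\ref{section:intro} and at the start of Section~\ref{section:proof}; once they are in hand, the rest of the argument is literally the proof of Lemma~\ref{lemma:E} with each symbol decorated by a prime, so I do not expect any further obstacle.
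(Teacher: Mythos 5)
Your proposal is correct and is exactly the argument the paper intends: the paper gives no separate proof of Lemma~\ref{lemma:E-prime} beyond the phrase ``Similarly, we prove that,'' deferring to the symmetry of the link \eqref{equation:diagram} that you spell out. The key facts you verify --- that $C_6^\prime$ is again a smooth sextic of genus $3$, so $(E^\prime)^3=-28$ and the numerology of Example~\ref{example:AZ-surfaces-E} and Lemma~\ref{lemma:E} carries over with primes interchanged --- are precisely what makes the paper's ``similarly'' legitimate.
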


Now, suppose that $Z$ is \textbf{not} a~point in $E\cap E^\prime$.
To prove Theorem A, we must show that $\beta(\mathbf{F})>0$.
Let $P$ be a~general point in $Z$. By Lemmas~\ref{lemma:E} and \ref{lemma:E-prime},
we may assume that either $P\not\in E$ or $P\not\in E^\prime$.
Hence, without loss of generality, we may assume that $P\not\in E$.
Let us show that $\beta(\mathbf{F})>0$.

Let $S$ be a~sufficiently general surface in $|H|$ that contains~$P$.
Then it follows from the~adjunction formula that $-K_S\sim H^\prime\vert_{S}$.
Set $\Pi=\pi(S)$. Then $\Pi$ is a~general plane in $\mathbb{P}^3$ that contains $\pi(P)$.
Write
$$
\Pi\cap C_6=\big\{P_1,P_2,P_3,P_4,P_5,P_6\big\},
$$
where $P_1$, $P_2$, $P_3$, $P_4$, $P_5$, $P_6$ are distinct points.
Then $\pi$ induces a~birational morphism~\mbox{$\varpi\colon S\to \Pi$}, which is a~blow up of the~intersection points $P_1$, $P_2$, $P_3$, $P_4$, $P_5$, $P_6$.

\begin{lemma}
\label{lemma:cubic-surface}
The divisor $-K_{S}$ is ample.
\end{lemma}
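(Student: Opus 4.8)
The plan is to exploit the relation $-K_S\sim H'|_S$ together with the nefness of $H'$, thereby reducing ampleness of $-K_S$ to the absence of trisecant lines of $C_6$ in the plane $\Pi=\pi(S)$.

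First I would record that $-K_S\sim H'|_S=(\pi')^*\mathcal O_{\mathbb P^3}(1)|_S$ is nef (being the restriction of a nef divisor), and that $(-K_S)^2=(H')^2\cdot H=3>0$ by a direct computation with $H'\sim 3H-E$ and the intersection numbers $H^3=1$, $H^2\cdot E=0$, $H\cdot E^2=-6$ already used in the proof of Lemma~\ref{lemma:E}. By Nakai--Moishezon it then remains to show $-K_S\cdot C''>0$ for every irreducible curve $C''\subset S$. Since $-K_S=(\pi'|_S)^*\mathcal O_{\mathbb P^3}(1)|_{\pi'(S)}$ is the pullback of an ample class, $-K_S\cdot C''=0$ forces $\pi'$ to contract $C''$; but the only curves contracted by the blow-down $\pi'$ are the fibres of the ruling $E'\to C_6'$, and by the description of $E'$ in Section~\ref{section:intro} every such fibre is the strict transform of a trisecant line of $C_6$, which $\pi$ carries to an honest trisecant line contained in $\Pi$. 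Hence $-K_S$ is ample if and only if $\Pi$ contains no trisecant line of $C_6$. (This matches the expected picture: $S\to\Pi\cong\mathbb P^2$ is the blow-up of the six distinct points $\Pi\cap C_6$, so $-K_S$ fails to be ample exactly when three of these points are collinear or all six lie on a conic, a reducible conic of the latter type containing a $\geqslant 3$-secant line anyway.)

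Next I would show that a sufficiently general $\Pi$ through $\pi(P)$ contains no trisecant. Since $P\notin E$, the point $\pi(P)$ lies off $C_6$. I would first note that the trisecant lines of $C_6$ form a subvariety of $\mathrm G(1,3)$ of dimension at most $1$: projecting $C_6$ from a general point of $\mathbb P^3$ gives a plane sextic with only nodes, so a general point of $\mathbb P^3$ lies on no trisecant, whence the union of all trisecants is a proper closed subvariety. I would then rule out that $\pi(P)$ lies on infinitely many trisecants: a one–dimensional family of trisecants through a point $q$ would make the projection $C_6\to\mathbb P^2$ from $q$ a degree $\geqslant 3$ cover of a nondegenerate conic, forcing $K_C+D\sim 2A$ for a $g^1_3$ of the form $A=|K_C-p|$; but then $|K_C+D|=|2A|$ cannot separate the (nonempty, since $h^0(K_C-2p)\geqslant h^0(K_C)-2>0$) divisor in $|K_C-2p|$, contradicting the very ampleness of $K_C+D$ provided by \eqref{equation:very-ample} and \cite{Homma}. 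Therefore only finitely many trisecants pass through $\pi(P)$, while each trisecant \emph{not} through $\pi(P)$ spans a unique plane $\langle\ell,\pi(P)\rangle$, contributing only a $1$-dimensional family of ``bad'' planes through $\pi(P)$. The bad planes thus form a proper closed subset of the $\mathbb P^2$ of planes through $\pi(P)$, so a general $S\in|H|$ containing $P$ has $\Pi=\pi(S)$ free of trisecants, and $-K_S$ is ample.

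The routine part is the reduction in the first step; the genuine content lies in the second. The main obstacle I anticipate is precisely that $\pi(P)$ is not an arbitrary point of $\mathbb P^3$ but $\pi$ of a general point of the possibly positive-dimensional centre $Z$, so one cannot merely invoke ``a general point lies on no trisecant''. This is why the argument must pass through the structure of $g^1_3$'s on a genus-three curve and the very ampleness of $K_C+D$: once cones of trisecants are excluded, every point of $\mathbb P^3\setminus C_6$ — in particular $\pi(P)$ — lies on only finitely many trisecants, after which the bookkeeping with the $1$-dimensional trisecant family is immediate.
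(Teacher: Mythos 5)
Your proof is correct and follows the same essential strategy as the paper: identify the curves on which $-K_S=H'\vert_S$ is degenerate with fibres of the ruling $E'\to C_6'$, i.e.\ with trisecants of $C_6$ lying in $\Pi$, and then argue that a general plane through $\pi(P)$ contains no trisecant. There are two differences worth recording. First, you run the reduction through Nakai--Moishezon applied to the nef class $H'\vert_S$ with $(H'\vert_S)^2=3$, whereas the paper uses the classical criterion for the blow-up of $\mathbb{P}^2$ at six points and checks the collinearity and conic conditions separately; your packaging makes the conic case automatic (an $H'$-trivial curve contracted by $\pi'$ must be a fibre of $E'\to C_6'$, hence maps to a line, never a conic), which is exactly the contradiction the paper derives by hand. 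Second, you explicitly exclude the possibility that infinitely many trisecants pass through $\pi(P)$, via the degree of the projection from $\pi(P)$, the resulting $g^1_3=|K_C-p|$, and the contradiction $h^0(K_C-2p)>0$ with \eqref{equation:very-ample}; the paper simply asserts here that the bad planes form a one-dimensional family and only supplies the analogous finiteness statement (for secants) in the proof of Lemma~\ref{lemma:cubic-surface-line}, so your extra step is a genuine and welcome tightening rather than redundancy. One minor soft spot: your stand-alone argument that the trisecants form an at most one-dimensional family (via the general projection being nodal) does not quite follow from ``the union of the trisecants is a proper closed subset''; but the fact is immediate from the paper's description of $E'$ as ruled over $C_6'$ with fibres the strict transforms of the trisecants, which you also invoke, so nothing is lost.
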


\begin{proof}
We must show that at most three points among $P_1$, $P_2$, $P_3$, $P_4$, $P_5$, $P_6$ are contained in a~line,
and not all of these six points are contained in an irreducible conic.

If there exists a~line $\ell\subset\Pi$ such that $\ell$ contains at least three points among $P_1$, $P_2$, $P_3$, $P_4$, $P_5$, $P_6$,
then $\ell$ is a~trisecant of the~curve~$C_6$, so that the~line $\ell$ is contained in $\pi(E^\prime)$,
and its strict transform on the~threefold $X$ is a~fiber of the~projection $E^\prime\to C_6^\prime$.
But the~planes in $\mathbb{P}^3$ containing $\pi(P)$ and a~trisecant of the~curve $C_6$ form a~one-dimensional family.
Hence, a~general plane in $\mathbb{P}^3$ that contains the~point $\pi(P)$ does not contain trisecants of the~curve $C_6$.
Therefore, we conclude that at most two points among $P_1$, $P_2$, $P_3$, $P_4$, $P_5$, $P_6$ are contained in a~line.

Similarly, if the~points $P_1$, $P_2$, $P_3$, $P_4$, $P_5$, $P_6$ are contained in an irreducible conic in $\Pi$,
then its strict transform on the~threefold $X$ has trivial intersection with $H^\prime\sim 3H-E$, which implies that this conic is the~image of a~fiber of the~projection $E^\prime\to C_6^\prime$,
which is impossible, since these fibers are mapped to lines in $\mathbb{P}^3$. Therefore, the~divisor $-K_S$ is ample.
\end{proof}

Thus, we can identify $S$ with a~smooth cubic surface in $\mathbb{P}^3$. Recall that $P\not\in E$.

\begin{lemma}
\label{lemma:cubic-surface-line}
Suppose that there exists a~line $\ell\subset S$ such that $P\in \ell$. Then $\pi(\ell)$ is a~conic.
\end{lemma}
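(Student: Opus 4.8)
The plan is to exploit the classification of the $27$ lines on the smooth cubic surface $S$ together with the explicit description of $\pi|_S$. By Lemma~\ref{lemma:cubic-surface} the points $P_1,\dots,P_6$ are in general position, so $S$ is a smooth cubic surface and $\varpi=\pi|_S\colon S\to\Pi$ is the blow down of six disjoint $(-1)$-curves $E_1,\dots,E_6$, with $E_i$ lying over $P_i$. Under $\varpi$ the $27$ lines of $S$ fall into three types: the six curves $E_i$, which contract to the points $P_i$; the $15$ strict transforms of the secants $\overline{P_iP_j}$, which map to those lines; and the $6$ strict transforms of the conics through five of the six points, which map to those conics. Since $\pi(\ell)=\varpi(\ell)$, the image $\pi(\ell)$ is a point, a line, or an irreducible conic, and I must rule out the first two cases.

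The point case is immediate: $\varpi$ contracts precisely $E_1,\dots,E_6$, and $E\cap S=E_1+\cdots+E_6$ because $S$ is the strict transform of $\Pi$ and $\pi|_S$ is the blow up of $\Pi$ at the points $P_i\in C_6$; hence $\ell=E_i$ would force $P\in E_i\subset E$, contrary to $P\notin E$.

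For the line case, suppose $\pi(\ell)=\overline{P_iP_j}$. Then this line is a secant of $C_6$ lying in $\Pi$, and since $\varpi$ is an isomorphism over $\Pi\setminus\{P_1,\dots,P_6\}$, which contains the generic point of $\overline{P_iP_j}$, we get $\pi(P)=\varpi(P)\in\overline{P_iP_j}$. So $\Pi$ would be a plane through $\pi(P)$ containing a secant of $C_6$ that passes through $\pi(P)$. I would now argue, exactly as in the proof of Lemma~\ref{lemma:cubic-surface} for trisecants, that this cannot happen for a general such plane: the secants of $C_6$ through $\pi(P)$ form a finite set, each of them lies in only a pencil of planes, and the planes through $\pi(P)$ form a two-dimensional family, so a general one — in particular our general $S\in|H|$ through $P$ — avoids all of them. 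This forces $\ell$ to be the strict transform of a conic through five of the $P_i$, and then $\pi(\ell)$ is an irreducible conic.

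The hard part is the finiteness claim: that only finitely many secants of $C_6$ pass through $\pi(P)$. Since $C_6$ is non-degenerate, its secant variety is all of $\mathbb{P}^3$ and the secant correspondence is generically finite onto $\mathbb{P}^3$, so the locus of points lying on infinitely many secants is a proper closed subset, which a general point $P$ of $Z$ should avoid, just as in the proof of Lemma~\ref{lemma:cubic-surface}. If it failed, the secants through $\pi(P)$ would sweep out a cone over a plane curve of degree $2$ or $3$. The degree $2$ case would exhibit $C_6$ as a triple cover of a smooth conic under projection from $\pi(P)$, so $K_C+D\sim 2N$ where $|N|$ is a $g^1_3$ on the plane quartic $C$; since $N\sim K_C-p$ for a point $p\in C$, this gives $D\sim K_C-2p$ and $h^0(\mathcal O_C(D))=h^0(\mathcal O_C(2p))=1$, contradicting \eqref{equation:very-ample}. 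The degree $3$ case — $C_6$ lying on an irreducible cubic cone — occurs for at most finitely many vertices and is again excluded by the generality of $P$; dealing with it carefully (above all when $Z$ is a point) is the step I expect to require the most care.
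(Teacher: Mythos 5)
Your overall strategy is the same as the paper's: reduce to showing that only finitely many secants of $C_6$ pass through $\pi(P)$, so that a general plane $\Pi$ through $\pi(P)$ contains none of them, and prove finiteness by analysing the projection $\phi\colon\mathbb{P}^3\dasharrow\mathbb{P}^2$ from $\pi(P)$. Your classification of the $27$ lines and your treatment of the case where $\phi(C_6)$ is a conic are correct; the latter is in fact a worthwhile addition, since the paper's stated dichotomy (birational onto a singular sextic, or a double cover of a smooth cubic) silently omits the possibility that $\phi\vert_{C_6}$ is a triple cover of a conic, and your computation $D\sim K_C-2p$, $h^0(\mathcal{O}_C(D))=h^0(\mathcal{O}_C(2p))=1$ is exactly the right way to exclude it using \eqref{equation:very-ample}.

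The genuine gap is the double-cover-of-a-cubic case. You cannot dispose of it ``by the generality of $P$'': the point $P$ is a general point of $Z$, not of $X$, and when $Z$ is a point --- precisely the situation this lemma must cover --- $\pi(P)$ is a fixed point of $\mathbb{P}^3$ that you have no freedom to move; even when $Z$ is a curve, $\pi(Z)$ could a priori be contained in the bad locus. Nor does ``at most finitely many vertices'' help, since $\pi(P)$ could be one of them. What is needed, and what the paper supplies, is an argument valid for \emph{every} $\pi(P)\notin C_6$: if $\phi\vert_{C_6}$ were a degree-two map onto a plane cubic $B$, then $B$ must be smooth (a singular irreducible cubic is rational, and lifting to its normalization would make $C_6$ hyperelliptic), so $C_6$ would lie on the cone over the elliptic curve $B$ with vertex $\pi(P)$, an irrational cubic surface (birational to $B\times\mathbb{P}^1$); but every cubic surface through $C_6$ belongs to the linear system defining $\chi$ in \eqref{equation:diagram} and is therefore birational to a plane in the target $\mathbb{P}^3$, a contradiction. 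Without this (or an equivalent) argument your proof is incomplete, and no purely numerical obstruction will close it: non-hyperelliptic bielliptic genus-three curves do exist, so the case cannot be killed by genus or Riemann--Roch considerations alone.
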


\begin{proof}
If $\pi(\ell)$ is not a~conic, then $\pi(\ell)$ is a~secant of the~curve $C_6$ that contains $\pi(P)$.
Let us show that we can choose $\Pi$ such that it does not contain any secant of the~curve $C_6$.

Let $\phi\colon\mathbb{P}^3\dasharrow\mathbb{P}^2$ be the~linear projection from~$\pi(P)$.
Since $C_6$ is not hyperelliptic and $\pi(P)\not\in C_6$, one of the~following two possibilities holds:
\begin{enumerate}
\item $\phi(C_6)$ is a~singular curve of degree $6$, and $\phi$ induces a~birational morphism $C_6\to\phi(C_6)$,
\item $\phi(C_6)$ is a~smooth cubic, and $\phi$ induces a~double cover $C_6\to\phi(C_6)$.
\end{enumerate}
In the~second case, the~curve $C_6$ is contained in an irrational cubic cone in $\mathbb{P}^3$,
which is impossible, because the~composition $\pi^\prime\circ\pi^{-1}$ birationally maps every cubic surface containing $C_6$ to a~plane in~$\mathbb{P}^3$.
Thus, we see that $\phi(C_6)$ is a~singular irreducible curve of degree $6$.

All secants of the~curve $C_6$ containing $\pi(P)$ are mapped by $\phi$ to~singular points of the~curve $\phi(C_6)$.
Since this curve has finitely many singular points, there are finitely many secants of the~curve $C_6$ that pass through $\pi(P)$.
Hence, since $\Pi$ is a~general plane in $\mathbb{P}^3$ that contains~$\pi(P)$,
we may assume that it does not contain secants of the~curve $C_6$ containing $\pi(P)$, so  $\pi(\ell)$ is a~conic.
\end{proof}

Let $T$ be the~unique hyperplane section of the~surface $S\subset\mathbb{P}^3$ that is singular at $P$.
Then it follows from Lemma~\ref{lemma:cubic-surface-line} that either $P$ is not contained in any line in $S$,
and one of the~following cases holds:
\begin{enumerate}
\item[(a)] $T$ is an irreducible cubic curve that has a~node at $P$;

\item[(b)] $T$ is an irreducible cubic curve that has a~cusp at $P$;
\end{enumerate}
or $P$ is contained in a~unique line $\ell\subset S$, $\pi(\ell)$ is a~conic,
and one of the~following cases holds:

\begin{enumerate}
\item[(c)] $T=\ell+C_2$ for a~smooth conic $C_2$ that intersect $\ell$ transversally at $P$;

\item[(d)] $T=\ell+C_2$ for a~smooth conic $C_2$ that is tangent to $\ell$ at $P$.
\end{enumerate}

Let us construct another curve in $S$ that is also singular at $P$.
Namely, for each $i\in\{1,2,3,4,5,6\}$, let $\ell_i$ be the~proper transform on $S$ of the~unique line in $\Pi$ that passes through the~points $\pi(P)$ and~$P_i$.
Set $L=\ell_1+\ell_2+\ell_3+\ell_4+\ell_5+\ell_6$.
Then it follows from Example~\ref{example:AZ-surfaces-H} that
$$
P(u)\big\vert_{S}\sim_{\mathbb{R}}
\left\{\aligned
&\frac{2+u}{3}T+\frac{1-u}{3}L\ \text{if $0\leqslant u\leqslant 1$}, \\
&(4-3u)T\ \text{if $1\leqslant u\leqslant \frac{4}{3}$}.
\endaligned
\right.
$$
Recall from Example~\ref{example:AZ-surfaces-H} that $\tau=\frac{4}{3}$ and $S_X(S)=\frac{53}{120}$.

Let $\widetilde{T}$ and $\widetilde{L}$ be the~proper transforms on $\widetilde{S}$ of the~curves $T$ and $L$, respectively.
If $0\leqslant u\leqslant 1$, then
$$
f^*\big(P(u)\vert_{S}\big)-vF\sim_{\mathbb{R}}\frac{2+u}{3}\widetilde{T}+\frac{1-u}{3}\widetilde{L}+\frac{10-4u-3v}{3}F,
$$
which implies that $\widetilde{t}(u)=\frac{10-4u}{3}$.
Similarly, if $1\leqslant u\leqslant\frac{4}{3}$, then
$$
f^*\big(P(u)\vert_{S}\big)-vF\sim_{\mathbb{R}}(4-3u)\widetilde{T}+(8-6u-v)F,
$$
which implies that  $\widetilde{t}(u)=8-6u$.

Finally, set $R=E^\prime\vert_{S}$. Then $R$ is a~smooth curve.
Let $\widetilde{R}$ be its strict transform on $\widetilde{S}$.
Then
$$
\widetilde{N}^\prime(u)=
\left\{\aligned
&0\ \text{if $0\leqslant u\leqslant 1$}, \\
&(u-1)\widetilde{R} \ \text{if $1\leqslant u\leqslant \frac{4}{3}$}.
\endaligned
\right.
$$
So, if $0\leqslant u\leqslant 1$ or $P\not\in E^\prime$, then $\widetilde{d}(u)=0$.
Similarly, if $1\leqslant u\leqslant \frac{4}{3}$ and $P\in E^\prime$, then $\widetilde{d}(u)=u-1$.

\begin{lemma}
\label{lemma:dP3-blow-up}
Suppose that $P$ is not contained in any line in $S$.
Then $\beta(\mathbf{F})>0$.
\end{lemma}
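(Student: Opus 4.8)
The plan is to apply the Abban--Zhuang inequality~\eqref{equation:Kento-point} with the flag consisting of the general cubic surface $S\in|H|$ through $P$ (smooth by Lemma~\ref{lemma:cubic-surface}), the exceptional curve $F$ of the blow up $f\colon\widetilde{S}\to S$ at $P$, and an arbitrary point $O\in F$. By Example~\ref{example:AZ-surfaces-H} we have $S_X(S)=\frac{53}{120}<1$, so it is enough to show that $S(W^S_{\bullet,\bullet};F)<2$ and that $S(W^{\widetilde{S},F}_{\bullet,\bullet,\bullet};O)<1$ for every $O\in F$; then $\frac{A_X(\mathbf{F})}{S_X(\mathbf{F})}>1$, hence $\beta(\mathbf{F})>0$. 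Since $P$ lies on no line of $S$, we are in case~(a) or case~(b); the computation is run in both, and we also keep the subcases $P\notin E^\prime$ and $P\in E^\prime$ separate, as $\widetilde{d}(u)$ and $\widetilde{N}^\prime(u)\big|_F$ vanish unless $P\in E^\prime$, in which case they are supported on $[1,\frac{4}{3}]$.

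The main computational input is the Zariski decomposition of $f^*(P(u)|_S)-vF$ on $\widetilde{S}$. On the cubic surface $S$ one has $T\sim-K_S$ with $\mathrm{mult}_P(T)=2$, the six conics $\ell_i\sim h-e_i$ all pass through $P$ so that $\mathrm{mult}_P(L)=6$ for $L=\ell_1+\dots+\ell_6$, and $R=E^\prime|_S$ passes through $P$ with multiplicity $1$ when $P\in E^\prime$ and $0$ otherwise, whereas no line of $S$ and no $e_i$ meets $F$ because $P\notin E$ and $P$ lies on no line. Writing $\widetilde{T}$, $\widetilde{L}$, $\widetilde{\ell}_i$, $\widetilde{R}$ for the strict transforms, this gives
\[
\widetilde{T}^2=-1,\quad \widetilde{T}\cdot F=2,\quad F^2=-1,\quad \widetilde{T}\cdot\widetilde{L}=0,\quad \widetilde{T}\cdot\widetilde{\ell}_i=0,\quad \widetilde{\ell}_i^{\,2}=-1,\quad \widetilde{\ell}_i\cdot F=1.
\]
Using $P(u)|_S=\frac{2+u}{3}T+\frac{1-u}{3}L$ for $0\le u\le 1$ and $P(u)|_S=(4-3u)T$ for $1\le u\le\frac{4}{3}$, one checks that for $0\le u\le 1$ the positive part of $f^*(P(u)|_S)-vF$ equals $f^*(P(u)|_S)-vF$ on $[0,3-\frac{3u}{2}]$, then one removes $(2v-6+3u)\widetilde{T}$ on $[3-\frac{3u}{2},3-u]$, and then one additionally removes $(v-3+u)\widetilde{\ell}_i$ from each of the six components on $[3-u,\widetilde{t}(u)]$ with $\widetilde{t}(u)=\frac{10-4u}{3}$; for $1\le u\le\frac{4}{3}$ the positive part equals $f^*(P(u)|_S)-vF$ on $[0,6-\frac{9u}{2}]$ and one then removes $(2v-12+9u)\widetilde{T}$ up to $\widetilde{t}(u)=8-6u$, while $\widetilde{R}$ never enters the negative part. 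In each chamber $(\widetilde{P}(u,v))^2$ and $\widetilde{P}(u,v)\cdot F$ are explicit quadratics in $u,v$ that one integrates directly.

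Feeding this into the formula for $S(W^S_{\bullet,\bullet};F)$ (with $(\widetilde{P}(u,0))^2=(P(u)|_S)^2$, with $\widetilde{d}(u)=0$ on $[0,1]$, and $\widetilde{d}(u)=u-1$ on $[1,\frac{4}{3}]$ when $P\in E^\prime$) produces a value strictly below $2$ in every case. For the point quantity we use $S(W^{\widetilde{S},F}_{\bullet,\bullet,\bullet};O)=\frac{3}{20}\int_0^{\tau}\int_0^{\widetilde{t}(u)}(\widetilde{P}(u,v)\cdot F)^2\,dv\,du+F_O(W^{\widetilde{S},F}_{\bullet,\bullet,\bullet})$, where $\widetilde{N}(u,v)\big|_F$ is supported on $\widetilde{T}\cap F$ and on the six points $\widetilde{\ell}_i\cap F$, while $\widetilde{N}^\prime(u)\big|_F$ is supported on $\widetilde{R}\cap F$ when $P\in E^\prime$. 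Since $P$ is general these points are pairwise distinct, so $F_O(W^{\widetilde{S},F}_{\bullet,\bullet,\bullet})$ vanishes unless $O$ is one of them, and a short comparison of the weights shows that the supremum over $O\in F$ is realized at $O=\widetilde{T}\cap F$, where $\mathrm{ord}_O(\widetilde{T}\big|_F)$ equals $1$ in case~(a) and $2$ in case~(b).

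I expect the decisive case to be~(b) (cuspidal $T$) together with $P\in E^\prime$ and $O=\widetilde{T}\cap F$: here $\widetilde{N}(u,v)\big|_F$ contributes with the doubled weight $2(2v-6+3u)$ on $[3-\frac{3u}{2},\widetilde{t}(u)]$ and $2(2v-12+9u)$ on $[6-\frac{9u}{2},8-6u]$, and one must argue, from the generality of $P$, that $\widetilde{R}\cap F$ avoids $\widetilde{T}\cap F$ so that $\widetilde{N}^\prime(u)\big|_F$ adds nothing to $\mathrm{ord}_O$; after that the remaining work is the explicit evaluation of $F_O(W^{\widetilde{S},F}_{\bullet,\bullet,\bullet})$ and of the double integral, and the verification that their sum is still strictly below $1$. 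Once this and the bound $S(W^S_{\bullet,\bullet};F)<2$ are confirmed in all cases, \eqref{equation:Kento-point} yields $\beta(\mathbf{F})>0$, which completes the proof.
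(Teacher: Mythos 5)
Your proposal follows the paper's argument essentially step for step: same flag $(S,F,O)$ with $S\in|H|$ general through $P$, the same Zariski chambers for $f^*(P(u)|_S)-vF$ (breaking at $v=\frac{6-3u}{2}$, $3-u$, $\frac{10-4u}{3}$ for $u\in[0,1]$ and at $v=\frac{12-9u}{2}$, $8-6u$ for $u\in[1,\frac43]$, extracting first $\widetilde{T}$ and then $\widetilde{L}$), the same values of $\widetilde{d}(u)$ and $\widetilde{N}'(u)$, and the same identification of $O\in\widetilde{T}\cap F$ in the cuspidal case with $P\in E'$ as the worst point. The one divergence is that you propose to argue by generality of $P$ that $\widetilde{R}\cap F$ avoids $\widetilde{T}\cap F$; the paper does not (and need not) establish this --- it simply adds the contribution of $\mathrm{ord}_O(\widetilde{N}'(u)|_F)$ on top of the $\widetilde{T}$-contribution and checks that the total is still $\frac{43}{48}<1$, which is safer, since when $Z$ is a single point the required transversality of $R$ and the tangent cone of $T$ at $P$ is not obviously achievable by varying $S$ alone.
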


\begin{proof}
The curve $T$ is irreducible. If $0\leqslant u\leqslant 1$, then
$$
P(u,v)\sim_{\mathbb{R}}
\left\{\aligned
&\frac{2+u}{3}\widetilde{T}+\frac{1-u}{3}\widetilde{L}+\frac{10-4u-3v}{3}F\ \text{if $0\leqslant v\leqslant \frac{6-3u}{2}$}, \\
&\frac{20-8u-6v}{3}\widetilde{T}+\frac{1-u}{3}\widetilde{L}+\frac{10-4u-3v}{3}F\ \text{if $\frac{6-3u}{2}\leqslant v\leqslant 3-u$}, \\
&\frac{10-4u-3v}{3}\big(2\widetilde{T}+\widetilde{L}+F\big)\ \text{if $3-u\leqslant v\leqslant \frac{10-4u}{3}$},
\endaligned
\right.
$$
and
$$
N(u,v)=\left\{\aligned
&0\ \text{if $0\leqslant v\leqslant \frac{6-3u}{2}$}, \\
&(2v-6+3u)\widetilde{T}\ \text{if $\frac{6-3u}{2}\leqslant v\leqslant 3-u$}, \\
&(2v-6+3u)\widetilde{T}+(v+u-3)\widetilde{L}\ \text{if $3-u\leqslant v\leqslant \frac{10-4u}{3}$}.
\endaligned
\right.
$$
This gives
$$
\big(P(u,v)\big)^2=
\left\{\aligned
&u^2-v^2-8u+10\ \text{if $0\leqslant v\leqslant \frac{6-3u}{2}$}, \\
&10u^2+12uv+3v^2-44u-24v+46\ \text{if $\frac{6-3u}{2}\leqslant v\leqslant 3-u$}, \\
&(10-4u-3v)^2\ \text{if $3-u\leqslant v\leqslant \frac{10-4u}{3}$}
\endaligned
\right.
$$
and
$$
P(u,v)\cdot F=
\left\{\aligned
&v\ \text{if $0\leqslant v\leqslant \frac{6-3u}{2}$}, \\
&12 - 6u - 3v\ \text{if $\frac{6-3u}{2}\leqslant v\leqslant 3-u$}, \\
&30 - 12u - 9v\ \text{if $3-u\leqslant v\leqslant \frac{10-4u}{3}$.}
\endaligned
\right.
$$
Similarly, if $1\leqslant u\leqslant \frac{4}{3}$, then
$$
P(u,v)\sim_{\mathbb{R}}\left\{\aligned
&(4-3u)\widetilde{T}+(8-6u-v)F\ \text{if $0\leqslant v\leqslant\frac{12-9u}{2}$}, \\
&(8-6u-v)\big(2\widetilde{T}+F\big)\ \text{if $\frac{12-9u}{2}\leqslant v\leqslant 8-6u$},
\endaligned
\right.
$$
and
$$
N(u,v)=\left\{\aligned
&0\ \text{if $0\leqslant v\leqslant\frac{12-9u}{2}$}, \\
&(2v+9u-12)\widetilde{T}\ \text{if $\frac{12-9u}{2}\leqslant v\leqslant 8-6u$}.
\endaligned
\right.
$$
This gives
$$
\big(P(u,v)\big)^2=
\left\{\aligned
&27u^2-v^2-72u+48\ \text{if $0\leqslant v\leqslant\frac{12-9u}{2}$}, \\
&3(8-6u-v)^2\ \text{if $\frac{12-9u}{2}\leqslant v\leqslant 8-6u$},
\endaligned
\right.
$$
and
$$
P(u,v)\cdot F=
\left\{\aligned
&v\ \text{if $0\leqslant v\leqslant\frac{12-9u}{2}$}, \\
&24-18u-3v\ \text{if $\frac{12-9u}{2}\leqslant v\leqslant 8-6u$}.
\endaligned
\right.
$$

Thus, if $P\in E^\prime$, then
\begin{multline*}
 S\big(W_{\bullet,\bullet}^S;F\big)=\frac{3}{20}\int\limits_1^{\frac{4}{3}}(27u^2-72u+48)(u-1)du+\frac{3}{20}\int\limits_{0}^{1}\int\limits_{0}^{\frac{6-3u}{2}}u^2-v^2-8u+10dvdu+\\
+\frac{3}{20}\int\limits_{0}^{1}\int\limits_{\frac{6-3u}{2}}^{3-u}10u^2 + 12uv + 3v^2 - 44u - 24v + 46dvdu+\frac{3}{20}\int\limits_{0}^{1}\int\limits_{3-u}^{\frac{10-4u}{3}}(4u+3v-10)^2dvdu+\\
 +\frac{3}{20}\int\limits_{1}^{\frac{4}{3}}\int\limits_{0}^{\frac{12-9u}{2}}27u^2-v^2-72u+48dvdu+\frac{3}{20}\int\limits_{1}^{\frac{4}{3}}\int\limits_{\frac{12-9u}{2}}^{8-6u}3(6u+v-8)^2dvdu=\frac{41}{24}<2.
\end{multline*}
Similarly, if $P\not\in E^\prime$, then $S(W_{\bullet,\bullet}^S;F)=\frac{409}{240}<\frac{41}{24}<2$.

Now, let $O$ be a~point in $F$. Let us compute $S(W_{\bullet,\bullet,\bullet}^{\widetilde{S},F};O)$.
We have
\begin{multline*}
 S(W_{\bullet,\bullet,\bullet}^{\widetilde{S},F};O)=\frac{3}{20}\int\limits_{0}^{1}\int\limits_{0}^{\frac{6-3u}{2}}v^2dvdu+\frac{3}{20}\int\limits_{0}^{1}\int\limits_{\frac{6-3u}{2}}^{3-u}(12-6u-3v)^2dvdu+\\
 +\frac{3}{20}\int\limits_{0}^{1}\int\limits_{3-u}^{\frac{10-4u}{3}}(30-12u-9v)^2dvdu+\frac{3}{20}\int\limits_{1}^{\frac{4}{3}}\int\limits_{0}^{\frac{12-9u}{2}}8v^2dvdu+\frac{3}{20}\int\limits_{1}^{\frac{4}{3}}\int\limits_{\frac{12-9u}{2}}^{8-6u}(24-18u-3v)^2dvdu+F_O\big(W_{\bullet,\bullet,\bullet}^{\widetilde{S},F}\big),
\end{multline*}
so that $S(W_{\bullet,\bullet,\bullet}^{\widetilde{S},F};O)=\frac{63}{80}+F_O(W_{\bullet,\bullet,\bullet}^{\widetilde{S},F})$.
In particular, if $P\not\in E^\prime$ and $O\not\in\widetilde{T}\cup\widetilde{C}$,
then $F_O(W_{\bullet,\bullet,\bullet}^{\widetilde{S},F})=0$, which implies that $S(W_{\bullet,\bullet,\bullet}^{\widetilde{S},F};O)=\frac{63}{80}$.
Let us compute $F_O(W_{\bullet,\bullet,\bullet}^{\widetilde{S},F})$ in the~remaining cases.

First, we deal with the~case $P\not\in E^\prime$. If $P\not\in E^\prime$, then we have $O\not\in\mathrm{Supp}(\widetilde{N}^\prime(u))$ for every $u\in[0,\frac{4}{3}]$.
Moreover, if $P\not\in E^\prime$ and $O\in\widetilde{L}$, then $O\not\in\widetilde{T}$,
and $\widetilde{L}$ intersects $F$ transversally at $O$, which gives
$$
 S\big(W_{\bullet,\bullet,\bullet}^{\widetilde{S},F};O\big)=\frac{63}{80}+\frac{6}{20}\int\limits_0^{1}\int\limits_{3-u}^{\frac{10-4u}{3}}\big(P(u,v)\cdot F\big)(v+u-3)\big(\widetilde{L}\cdot F\big)_Odvdu=\frac{19}{24}.
$$
Similarly, if $P\not\in E^\prime$ and $O\in\widetilde{T}$, then  $O\not\in\widetilde{L}$ and
\begin{multline*}
 S\big(W_{\bullet,\bullet,\bullet}^{\widetilde{S},F};O\big)=\frac{63}{80}+\frac{6}{20}\int\limits_0^{1}\int\limits_{\frac{6-3u}{2}}^{\frac{10-4u}{3}}\big(P(u,v)\cdot F\big)(2v-6+3u)\big(\widetilde{T}\cdot F\big)_Odvdu+\\
+\frac{6}{20}\int\limits_1^{\frac{4}{3}}\int\limits_{\frac{12-9u}{2}}^{8-6u}\big(P(u,v)\cdot F\big)(2v+9u-12)\big(\widetilde{T}\cdot F\big)_O=\frac{63}{80}+\frac{6}{20}\int\limits_0^{1}\int\limits_{\frac{6-3u}{2}}^{3-u}(12-6u-3v)(2v-6+3u)\big(\widetilde{T}\cdot F\big)_Odvdu+\\
+\frac{6}{20}\int\limits_0^{1}\int\limits_{3-u}^{\frac{10-4u}{3}}(30-12u-9v)(2v-6+3u)\big(\widetilde{T}\cdot F\big)_Odvdu+\frac{6}{20}\int\limits_1^{\frac{4}{3}}\int\limits_{\frac{12-9u}{2}}^{8-6u}(24-18u-3)(2v+9u-12)\big(\widetilde{T}\cdot F\big)_O,
\end{multline*}
so $S(W_{\bullet,\bullet,\bullet}^{\widetilde{S},F};O)=\frac{63}{80}+\frac{5}{96}\big(\widetilde{T}\cdot F\big)_O\leqslant\frac{63}{80}+\frac{5}{96}\widetilde{T}\cdot F=\frac{107}{120}$.
Hence, if $P\not\in E^\prime$, then $\beta(\mathbf{F})>0$ by \eqref{equation:Kento-point}.

Therefore, to complete the~proof of the~lemma, we may assume that $P\in E^\prime$.
Since  $R$ is smooth, the~curve $\widetilde{R}$ intersects $F$ transversally at one point, so that
$$
\mathrm{ord}_O\big(\widetilde{N}^\prime(u)\big\vert_{F}\big)=
\left\{\aligned
&0\ \text{if $0\leqslant u\leqslant 1$}, \\
&0\ \text{if $1\leqslant u\leqslant \frac{4}{3}$ and $O\ne \widetilde{R}\cap F$}, \\
&u-1\ \text{if $1\leqslant u\leqslant \frac{4}{3}$ and $O=\widetilde{R}\cap F$}.
\endaligned
\right.
$$
Hence, if $O\ne \widetilde{R}\cap F$, then $S(W_{\bullet,\bullet,\bullet}^{\widetilde{S},F};O)$ can be computed as in the~case $P\not\in E^\prime$.
Thus, we may also assume that $O=\widetilde{R}\cap F$.
Moreover, if $O\in\widetilde{L}$, then our previous calculations give
\begin{multline*}
\quad\quad\quad \quad S\big(W_{\bullet,\bullet,\bullet}^{\widetilde{S},F};O\big)=
\frac{6}{20}\int\limits_1^{\frac{4}{3}}\int\limits_0^{\widetilde{t}(u)}\big(\widetilde{P}(u,v)\cdot F\big)(u-1)dvdu+\frac{19}{24}=\\
=\frac{6}{20}\int\limits_1^{\frac{4}{3}}\int\limits_0^{\frac{12-9u}{2}}v(u-1)dvdu+
\frac{6}{20}\int\limits_1^{\frac{4}{3}}\int\limits_{\frac{12-9u}{2}}^{8-6u}(24-18u-3v)(u-1)dvdu+\frac{19}{24}=\frac{191}{240}.
\end{multline*}
Similarly, if $O\in\widetilde{T}$, then, using our previous computations, we get
$$
S\big(W_{\bullet,\bullet,\bullet}^{\widetilde{S},F};O\big)=\frac{1}{241}+\frac{63}{80}+\frac{5}{96}\big(\widetilde{T}\cdot F\big)_O
\leqslant\frac{1}{241}+\frac{63}{80}+\frac{5}{96}\widetilde{T}\cdot F=\frac{43}{48}.
$$
Thus, we see that $S(W_{\bullet,\bullet,\bullet}^{\widetilde{S},F};O)<1$ for every point $O\in F$,
so that $\beta(\mathbf{F})>0$ by \eqref{equation:Kento-point}.
\end{proof}

To complete the~proof of Theorem A, we may assume that $T=\ell+C_2$ and $P\in\ell\cap C_2$,
where $\ell$ is a~line such that $\pi(\ell)$ is a~conic in $\mathbb{P}^2$,
and $C_2$ is a~smooth conic such that $\pi(C_2)$ is a~line.
Then $C_2$ is one of the~curves $\ell_1$, $\ell_2$, $\ell_3$, $\ell_4$, $\ell_5$, $\ell_6$,
so we may assume that $C_2=\ell_6$.
Set $L^\prime=\ell_1+\ell_2+\ell_3+\ell_4+\ell_5$.
Let us denote by $\widetilde{\ell}$, $\widetilde{C}_2$, $\widetilde{L}^\prime$ the~strict transforms on the~surface $\widetilde{S}$ of the~curves $\ell$, $C_2$, $L^\prime$, respectively.
Then $\widetilde{\ell}\cap\widetilde{L}^\prime=\varnothing$ and $\widetilde{C}_2\cap\widetilde{L}^\prime=\varnothing$.
Moreover, if $0\leqslant u\leqslant 1$, then
$$
P(u,v)\sim_{\mathbb{R}}
\left\{\aligned
&\frac{2+u}{3}\widetilde{\ell}+\widetilde{C}_2+\frac{1-u}{3}\widetilde{L}^\prime+\frac{10-4u-3v}{3}F\ \text{if $0\leqslant v\leqslant 3-2u$}, \\
&\frac{13-4u-3v}{6}\widetilde{\ell}+\widetilde{C}_2+\frac{1-u}{3}\widetilde{L}^\prime+\frac{10-4u-3v}{3}F\ \text{if $3-2u\leqslant v\leqslant \frac{9-4u}{3}$}, \\
&\frac{10-4u-3v}{3}\big(2\widetilde{\ell}+3\widetilde{C}_2+F\big)+\frac{1-u}{3}\widetilde{L}^\prime\ \text{if $\frac{9-4u}{3}\leqslant v\leqslant 3-u$}, \\
&\frac{10-4u-3v}{3}\big(2\widetilde{\ell}+\widetilde{L}^\prime+3\widetilde{C}_2+F\big)\ \text{if $3-u\leqslant v\leqslant \frac{10-4u}{3}$},
\endaligned
\right.
$$
and
$$
N(u,v)=\left\{\aligned
&0\ \text{if $0\leqslant v\leqslant 3-2u$}, \\
&\frac{v+2u-3}{2}\widetilde{\ell}\ \text{if $3-2u\leqslant v\leqslant \frac{9-4u}{3}$}, \\
&(2v+3u-6)\widetilde{\ell}+(3v+4u-9)\widetilde{C}_2\ \text{if $\frac{9-4u}{3}\leqslant v\leqslant 3-u$}, \\
&(2v+3u-6)\widetilde{\ell}+(3v+4u-9)\widetilde{C}_2+(v+u-3)\widetilde{L}^\prime\ \text{if $3-u\leqslant v\leqslant \frac{10-4u}{3}$}.
\endaligned
\right.
$$
This gives
$$
\big(P(u,v)\big)^2=
\left\{\aligned
&u^2-v^2-8u+10\ \text{if $0\leqslant v\leqslant 3-2u$}, \\
&\frac{29}{2}-14u-3v+3u^2-\frac{v^2}{2}+2vu\ \text{if $3-2u\leqslant v\leqslant \frac{9-4u}{3}$}, \\
&11u^2+14uv+4v^2-50u-30v+55\ \text{if $\frac{9-4u}{3}\leqslant v\leqslant 3-u$}, \\
&(10-4u-3v)^2\ \text{if $3-u\leqslant v\leqslant \frac{10-4u}{3}$},
\endaligned
\right.
$$
and
$$
P(u,v)\cdot F=
\left\{\aligned
&v\ \text{if $0\leqslant v\leqslant 3-2u$}, \\
&\frac{3}{2}-u+\frac{v}{2}\ \text{if $3-2u\leqslant v\leqslant \frac{9-4u}{3}$}, \\
&15-7u-4v\ \text{if $\frac{9-4u}{3}\leqslant v\leqslant 3-u$}, \\
&30-12u-9v\ \text{if $3-u\leqslant v\leqslant \frac{10-4u}{3}$}.
\endaligned
\right.
$$
Furthermore, if $1\leqslant u\leqslant \frac{4}{3}$, then
$$
P(u,v)\sim_{\mathbb{R}}\left\{\aligned
&(4-3u)(\widetilde{\ell} + \widetilde{C}_2)+(8-6u-v)F\ \text{if $0\leqslant v\leqslant 4-3u$}, \\
&\frac{12-9u-v}{2}\widetilde{\ell}+(4-3u)\widetilde{C}_2+(8-6u-v)F\ \text{if $4-3u\leqslant v\leqslant \frac{20-15u}{3}$}, \\
&(8-6u-v)\big(2\widetilde{\ell}+3\widetilde{C}_2+F\big)\ \text{if $\frac{20-15u}{3}\leqslant v\leqslant 8-6u$},
\endaligned
\right.
$$
and
$$
N(u,v)=\left\{\aligned
&0\ \text{if $0\leqslant v\leqslant 4-3u$}, \\
&\frac{v+3u-4}{2}\widetilde{\ell}\ \text{if $4-3u\leqslant v\leqslant \frac{20-15u}{3}$}, \\
&(9u+2v-12)\widetilde{\ell}+(15u+3v-20)\widetilde{C}_2\ \text{if $\frac{20-15u}{3}\leqslant v\leqslant 8-6u$}.
\endaligned
\right.
$$
This gives
$$
\big(P(u,v)\big)^2=
\left\{\aligned
&27u^2-v^2-72u+48\ \text{if $0\leqslant v\leqslant 4-3u$}, \\
&56-84u-4v+\frac{63}{2}u^2-\frac{v^2}{2}+3vu\ \text{if $4-3u\leqslant v\leqslant \frac{20-15u}{3}$}, \\
&4(8-6u-v)^2\ \text{if $\frac{20-15u}{3}\leqslant v\leqslant 8-6u$},
\endaligned
\right.
$$
and
$$
P(u,v)\cdot F=
\left\{\aligned
&v\ \text{if $0\leqslant v\leqslant 4-3u$}, \\
&2-\frac{3u}{2}+\frac{v}{2}\ \text{if $4-3u\leqslant v\leqslant \frac{20-15u}{3}$}, \\
&32-24u-4v\ \text{if $\frac{20-15u}{3}\leqslant v\leqslant 8-6u$}.
\endaligned
\right.
$$
Now, as in the~proof of Lemma~\ref{lemma:dP3-blow-up}, we compute
$$
S\big(W_{\bullet,\bullet}^S;F\big)=\left\{\aligned
&\frac{77}{45}\ \text{if $P\in E^\prime$}, \\
&\frac{1229}{720}\ \text{if $P\not\in E^\prime$}.
\endaligned
\right.
$$
Similarly, if $O$ is a~point in $F$, we can compute $S(W_{\bullet,\bullet,\bullet}^{\widetilde{S},F};O)$ as we did this in
the proof of Lemma~\ref{lemma:dP3-blow-up}. The results of these computations are presented in the~following two tables:

\begin{center}
\renewcommand\arraystretch{1.5}
\begin{tabular}{|c|c|c|c|c|c|}
\hline
condition                                                  & $O\in\widetilde{\ell}\cap\widetilde{C}_2\cap\widetilde{R}$ & $\widetilde{\ell}\cap\widetilde{C}_2\ni O\not\in\widetilde{R}$ & $\widetilde{\ell}\cap\widetilde{R}\ni O\not\in\widetilde{C}_2$ & $\widetilde{\ell}\ni O\not\in\widetilde{R}\cup\widetilde{C}_2$ & $\widetilde{C}_2\cap\widetilde{R}\ni O\not\in\widetilde{\ell}$ \\
\hline
\hline
$S\big(W_{\bullet,\bullet,\bullet}^{\widetilde{S},F};O\big)$ & $\frac{163}{180}$ & $\frac{649}{720}$ & $\frac{1859}{2160}$ & $\frac{185}{216}$ & $\frac{1801}{2160}$ \\
\hline
\end{tabular}
\end{center}

\begin{center}
\renewcommand\arraystretch{1.5}
\begin{tabular}{|c|c|c|c|c|c|}
\hline
condition & $\widetilde{C}_2\ni O\not\in\widetilde{R}\cup\widetilde{\ell}$ & $O\in\widetilde{L}^\prime\cap\widetilde{R}$ & $\widetilde{L}^\prime\ni O\not\in\widetilde{R}$ & $\widetilde{R}\ni O\not\in\widetilde{\ell}\cup\widetilde{C}^\prime\cap\widetilde{L}^\prime$ & $O\not\in\widetilde{\ell}\cup\widetilde{C}^\prime\cap\widetilde{L}^\prime\cup\widetilde{R}$ \\
\hline
\hline
$S\big(W_{\bullet,\bullet,\bullet}^{\widetilde{S},F};O\big)$ & $\frac{112}{135}$ & $\frac{571}{720}$ & $\frac{71}{90}$ & $\frac{71}{90}$ & $\frac{113}{144}$ \\
\hline
\end{tabular}
\end{center}

\medskip

Thus, we proved that $S(W_{\bullet,\bullet}^S;F)<2$, and we proved that $S(W_{\bullet,\bullet,\bullet}^{\widetilde{S},F};O)<1$ for every point $O\in F$.
Therefore, using \eqref{equation:Kento-point}, we get $\beta(\mathbf{F})>0$. This completes the~proof of Theorem A.

\section{The proof of Theorem B}
\label{section:Aut}

Let us use all assumptions and notations introduced in Section~\ref{section:intro}.
Recall that
$$
\mathrm{Aut}\big(\mathbb{P}^3,C_6\big)\simeq\mathrm{Aut}\big(C,[D]\big)\subset\mathrm{Aut}(C),
$$
and all possibilities for the~group $\mathrm{Aut}(C)$ are listed in \cite{Bars,DolgachevBook},
where the~two lists disagree a~little~bit.
Moreover, since $\pi$ is $\mathrm{Aut}(\mathbb{P}^3,C_6)$-equivariant,
we can identify $\mathrm{Aut}(\mathbb{P}^3,C_6)$ with a~subgroup in $\mathrm{Aut}(X)$.
Then the action of the group $\mathrm{Aut}(X)$ on the set $\{E,E^\prime\}$ gives a~monomorphism
$$
\mathrm{Aut}(X)/\mathrm{Aut}\big(\mathbb{P}^3,C_6\big)\hookrightarrow\mumu_2,
$$
which is surjective if and only if $\mathrm{Aut}(X)$ has an element that swaps the~surfaces $E$ and $E^\prime$.

\begin{remark}[{\cite[Example~7.2.6]{DolgachevBook}}]
\label{remark:symmetric}
We can choose $M_1$, $M_2$, $M_3$ in \eqref{equation:P3-P3} to be symmetric $\iff$ $2D\sim K_C$.
Moreover, if  $M_1$, $M_2$, $M_3$ are symmetric, then $X$ admits the~involution
$$
\big([x_0:x_1:x_2:x_3],[y_0:y_1:y_2:y_3]\big)\mapsto\big([y_0:y_1:y_2:y_3],[x_0:x_1:x_2:x_3]\big).
$$
In this case, we have $\mathrm{Aut}(X)\simeq\mathrm{Aut}(\mathbb{P}^3,C_6)\times\mumu_2$.
For more details, see \cite{Ottaviani}.
\end{remark}

\begin{remark}[Kuznetsov]
\label{remark:Kuznetsov}
Set $V=H^0(\mathcal{O}_C(K_C+D))$, $W=H^0(\mathcal{O}_C(2K_C-D))$ and $G=\mathrm{Aut}(C,[D])$.
Let $\widehat{G}$ be a~central extension of the~group $G$ such that $D$ (considered as a line bundle) is $\widehat{G}$-linearizable.
Then the~sheaf $\mathcal{O}_C(D)$ admits a~$\widehat{G}$-equivariant resolvent
$$
0\rightarrow W^*\otimes \mathcal{O}_{\mathbb{P}^2}(-2)\rightarrow V\otimes \mathcal{O}_{\mathbb{P}^2}(-1)\rightarrow \mathcal{O}_C(D)\rightarrow 0,
$$
which is known as the~Beilinson resolvent.
Since $W^*\otimes\mathcal{O}_{\mathbb{P}^2}(-2) \to  V\otimes  \mathcal{O}_{\mathbb{P}^2}(-1)$ is $\widehat{G}$-equivariant,
the~corresponding map $\rho:V^*\otimes W^* \to H^0\big(\mathcal{O}_{\mathbb{P}^2}(1)\big)$ is equivariant,
where $H^0\big(\mathcal{O}_{\mathbb{P}^2}(1)\big)\simeq H^0(\mathcal{O}_{C}(K_C)\big)$ as $\widehat{G}$-representations.
On the~other hand, the~embedding $X\hookrightarrow\mathbb{P}^3\times\mathbb{P}^3$ given by \eqref{equation:P3-P3} can be realized~as
$$
X=\big(\mathbb{P}(V^*)\times\mathbb{P}(W^*)\big)\cap\mathbb{P}\big(\mathrm{ker}(\rho)\big),
$$
and the~$\widehat{G}$-action on $X$ factors through $G$, which is the~natural $G$-action.
\end{remark}

This remark gives

\begin{lemma}
\label{lemma:Aut-C2}
There exists a group homomorphism $\eta\colon\mathrm{Aut}(X)\to\mathrm{Aut}(C)$ such that
its restriction to the~subgroup $\mathrm{Aut}(\mathbb{P}^3,C_6)\simeq\mathrm{Aut}(C,[D])$ gives a natural embedding $\mathrm{Aut}(\mathbb{P}^3,C_6)\hookrightarrow\mathrm{Aut}(C)$.
\end{lemma}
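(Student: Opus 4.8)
The plan is to attach the canonically embedded curve $C\subset\mathbb{P}^2$ to $X$ in a way that is manifestly functorial for $\mathrm{Aut}(X)$, using the realization of $X$ inside $\mathbb{P}^3\times\mathbb{P}^3=\mathbb{P}(V^*)\times\mathbb{P}(W^*)$ from Remark~\ref{remark:Kuznetsov}. As recalled in Section~\ref{section:intro}, every $g\in\mathrm{Aut}(X)$ either preserves both extremal contractions $\pi,\pi^\prime$ or interchanges them; accordingly $g$ is induced by an automorphism of $\mathbb{P}(V^*)\times\mathbb{P}(W^*)$ preserving $X$, acting through $\mathrm{GL}(V^*)\times\mathrm{GL}(W^*)$ in the first case and through the flip composed with isomorphisms $V^*\xrightarrow{\sim}W^*$ and $W^*\xrightarrow{\sim}V^*$ in the second. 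Hence $\mathrm{Aut}(X)$ acts on the space $\mathcal{W}\subset V\otimes W=H^0\bigl(\mathbb{P}^3\times\mathbb{P}^3,\mathcal{O}(1,1)\bigr)$ of $(1,1)$-forms vanishing on $X$, which is $3$-dimensional since $h^0\bigl(X,-K_X\bigr)=h^0\bigl(X,\mathcal{O}(1,1)\vert_X\bigr)=13$ while $h^0\bigl(\mathbb{P}^3\times\mathbb{P}^3,\mathcal{O}(1,1)\bigr)=16$. By Remark~\ref{remark:Kuznetsov} (equivalently by \cite{Ottaviani}) there is a natural isomorphism $\mathcal{W}\cong H^0\bigl(\mathcal{O}_C(K_C)\bigr)^*$, so that $\mathbb{P}(\mathcal{W})$ is the canonical plane of $C$, and for a basis $M_1,M_2,M_3$ of $\mathcal{W}$ with dual coordinates $(x:y:z)$ on $\mathbb{P}(\mathcal{W})$ the canonical image of $C$ is the quartic $\bigl\{\det(xM_1+yM_2+zM_3)=0\bigr\}$ — precisely the identification already used in Section~\ref{subsection:S4} and Corollary~\ref{corollary:Fermat}.

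It then remains to check that the induced $\mathrm{Aut}(X)$-action on $\mathbb{P}(\mathcal{W})=\mathbb{P}^2$ preserves this quartic. If $g=(g_V,g_W)$ preserves the two factors, it transforms the matrices $M_i$ by $M\mapsto A^{\mathsf T}MB$ for the invertible matrices $A,B$ representing $g_V,g_W$, so $\det(xM_1+yM_2+zM_3)$ is multiplied by the nonzero constant $\det A\det B$, and its vanishing locus — read in the correspondingly transformed coordinates on $\mathbb{P}(\mathcal{W})$ — is unchanged. If $g$ interchanges the factors, the matrices are transformed instead into transposes of matrices of the same shape, and the conclusion is the same because transposition preserves the determinant. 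Since $C$ is non-hyperelliptic, its automorphism group acts faithfully on the canonical plane $\mathbb{P}(\mathcal{W})$ and consists of exactly those projective transformations of $\mathbb{P}(\mathcal{W})$ preserving $C$; hence we obtain a homomorphism $\eta\colon\mathrm{Aut}(X)\to\mathrm{Aut}(C)$, and it is a homomorphism because it is assembled from linear actions on $V$, $W$ and $\mathcal{W}$.

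Finally, I would verify that $\eta$ restricts on $\mathrm{Aut}(\mathbb{P}^3,C_6)\simeq\mathrm{Aut}(C,[D])$ to the tautological embedding. This is exactly the $\widehat{G}$-equivariance in Remark~\ref{remark:Kuznetsov}: for $g\in\mathrm{Aut}(C,[D])$ the induced action on $\mathcal{W}\cong H^0\bigl(\mathcal{O}_C(K_C)\bigr)^*$ is dual to the natural pull-back action of $g$ on $H^0\bigl(\mathcal{O}_C(K_C)\bigr)$, so the corresponding automorphism of the canonical plane restricts on $C$ to $g$ itself; in particular $\eta$ is injective on this subgroup. The step I expect to cost the most effort is the treatment of the factor-swapping elements of $\mathrm{Aut}(X)$ (those surjecting onto $\mumu_2$): one must be sure that such a $g$ genuinely descends to an action on $\mathcal{W}$ and not merely on $\mathbb{P}(V^*\otimes W^*)$, that the resulting element of $\mathrm{PGL}\bigl(\mathcal{W}\bigr)$ fixes the quartic $C$ as a subscheme rather than up to projective equivalence, and that the isomorphism $\mathcal{W}\cong H^0\bigl(\mathcal{O}_C(K_C)\bigr)^*$ is compatible with the flip — all of which should follow from transpose-invariance of the determinant together with the symmetric case $2D\sim K_C$ singled out in Remark~\ref{remark:symmetric}, but deserves to be spelled out.
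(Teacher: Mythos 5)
Your proposal is correct and follows essentially the same route as the paper: identify the net of $(1,1)$-divisors through $X$ with a $\mathbb{P}^2$ on which $\mathrm{Aut}(X)$ acts, recognize the determinantal quartic $\{\det(xM_1+yM_2+zM_3)=0\}$ there as a copy of $C$, and invoke Remark~\ref{remark:Kuznetsov} for the functoriality on the subgroup $\mathrm{Aut}(\mathbb{P}^3,C_6)$. The only difference is that the point you flag as delicate (invariance of the quartic under factor-swapping elements) is dispatched in the paper by observing that the quartic is intrinsically the locus of \emph{singular} members of the net $\mathcal{M}$, which makes its $\mathrm{Aut}(X)$-invariance automatic and renders your determinant/transpose computation unnecessary.
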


\begin{proof}
Let $\mathcal{M}$ be the~two-dimensional linear system of divisors of degree $(1,1)$ on $\mathbb{P}^3\times\mathbb{P}^3$ that contains the~threefold $X$.
Then $\mathcal{M}$ can be identified with the~projectivization of the~three-dimensional vector space spanned by the~matrices $M_1$, $M_2$, $M_3$, which we will identify with $\mathbb{P}^2_{x,y,z}$.
Then $\mathrm{Aut}(X)$ naturally acts on this $\mathbb{P}^2_{x,y,z}$, because the~action of the~group $\mathrm{Aut}(X)$ on $X$ lifts to its action on $\mathbb{P}^3\times\mathbb{P}^3$.

Moreover, the~$\mathrm{Aut}(X)$-action on $\mathbb{P}^2_{x,y,z}$ preserves the~quartic curve in $\mathbb{P}^2_{x,y,z}$ given by
$$
\mathrm{det}\big(xM_1+yM_2+zM_2\big)=0,
$$
which parametrizes singular divisors in $\mathcal{M}$.
This curve is isomorphic to the~curve $C$, which gives us the~required homomorphism of groups $\eta\colon\mathrm{Aut}(X)\to\mathrm{Aut}(C)$.
It follows from Remark~\ref{remark:Kuznetsov} that this group homomorphism is functorial,
so it gives a~natural embedding $\mathrm{Aut}(\mathbb{P}^3,C_6)\hookrightarrow\mathrm{Aut}(C)$.
\end{proof}

\begin{corollary}
\label{corollary:Aut-1}
Either $\mathrm{Aut}(X)\simeq\mathrm{Aut}(\mathbb{P}^3,C_6)\times\mumu_2$ or $\mathrm{Aut}(X)$ is isomorphic to a~subgroup $\mathrm{Aut}(C)$.
\end{corollary}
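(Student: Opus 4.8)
The plan is to run everything through the homomorphism $\eta\colon\mathrm{Aut}(X)\to\mathrm{Aut}(C)$ constructed in Lemma~\ref{lemma:Aut-C2} and to control its kernel. Write $A=\mathrm{Aut}(X)$ and $B=\mathrm{Aut}(\mathbb{P}^3,C_6)$. Recall from Section~\ref{section:intro} that $B\trianglelefteq A$ and $A/B$ embeds into $\mumu_2$, so $[A:B]\in\{1,2\}$. Since $\eta|_B$ is the natural embedding $B\hookrightarrow\mathrm{Aut}(C)$, it is injective, hence $\ker\eta\cap B=\{1\}$; consequently $\ker\eta$ maps injectively to $A/B$ and $|\ker\eta|\leqslant[A:B]\leqslant 2$.

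First I would dispose of the case $\ker\eta=\{1\}$: then $\eta$ is a monomorphism and $A\cong\eta(A)$ is a subgroup of $\mathrm{Aut}(C)$, which is the second alternative in the statement. Note this can occur even when $A$ contains an element swapping $E$ and $E^\prime$: such an element simply acts nontrivially on $C$ via $\eta$.

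It remains to treat $|\ker\eta|=2$. Then the injection $\ker\eta\hookrightarrow A/B$ forces $[A:B]=2$, so $|A|=2|B|$. Let $\iota$ generate $\ker\eta$. As a normal subgroup of order $2$, $\langle\iota\rangle$ is central in $A$; moreover $\langle\iota\rangle\cap B=\{1\}$ and $|B|\cdot|\langle\iota\rangle|=|A|$, so $A$ is the internal direct product $B\times\langle\iota\rangle\cong\mathrm{Aut}(\mathbb{P}^3,C_6)\times\mumu_2$, which is the first alternative. Since these two cases are exhaustive, the corollary follows.

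The argument is short once Lemma~\ref{lemma:Aut-C2} is in hand; the only point needing care is the bookkeeping in the last case --- one must use that $\ker\eta$ is \emph{normal} of prime order and meets $B$ trivially in order to conclude that it is central and splits off as a direct factor, rather than merely surjecting onto $A/B$. It is also worth cross-checking against Remark~\ref{remark:symmetric}: when the $M_i$ can be chosen symmetric, the swapping involution is exactly such an element $\iota\in\ker\eta$, recovering $\mathrm{Aut}(X)\cong\mathrm{Aut}(\mathbb{P}^3,C_6)\times\mumu_2$ directly and consistently with the first alternative.
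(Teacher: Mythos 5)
Your proposal is correct and fills in exactly the argument the paper leaves implicit: Corollary~\ref{corollary:Aut-1} is presented there as an immediate consequence of Lemma~\ref{lemma:Aut-C2} together with the exact sequence $1\to\mathrm{Aut}(\mathbb{P}^3,C_6)\to\mathrm{Aut}(X)\to\mumu_2$, and your case split on $\ker\eta$ (trivial kernel gives an embedding into $\mathrm{Aut}(C)$; kernel of order $2$ is central, meets $\mathrm{Aut}(\mathbb{P}^3,C_6)$ trivially, and splits off as a direct factor) is the intended reasoning. The group-theoretic bookkeeping in the second case is handled correctly, so nothing further is needed.
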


Now, we are ready to state a~criterion when $\mathrm{Aut}(X)\ne\mathrm{Aut}(\mathbb{P}^3,C_6)$.

\begin{lemma}
\label{lemma:Aut}
$\mathrm{Aut}(X)\ne\mathrm{Aut}(\mathbb{P}^3,C_6)$ $\iff$ there is $g\in\mathrm{Aut}(C)$ such that $g^*(D)\sim K_C-D$.
\end{lemma}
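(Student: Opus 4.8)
The proof is an analysis of when the exact sequence
$$
1\rightarrow \mathrm{Aut}\big(\mathbb{P}^3,C_6\big) \rightarrow \mathrm{Aut}(X) \rightarrow \mumu_2
$$
has surjective last arrow, combined with the dictionary between $X$ and the pair $(C,[D])$. The plan is to prove both implications directly.

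For the direction $(\Leftarrow)$, suppose $g\in\mathrm{Aut}(C)$ satisfies $g^*(D)\sim K_C-D$. First I would observe that $g^*(D)$ also satisfies the condition \eqref{equation:very-ample}: indeed $h^0(\mathcal{O}_C(K_C-D))=h^0(\mathcal{O}_C(D))=0$ by Serre duality together with $\deg(K_C-D)=2$, so $K_C-D$ (equivalently $g^*(D)$) is again a degree-two divisor class with no sections. Hence the construction of Section~\ref{section:intro} applied to the pair $(C,[K_C-D])$ produces a sextic curve, and the linear system $|K_C+(K_C-D)|=|2K_C-D|$ is exactly the complete linear system $W$ appearing in Remark~\ref{remark:Kuznetsov}. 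The key point is that the Beilinson/Kuznetsov description of $X$ in Remark~\ref{remark:Kuznetsov} is symmetric in $(V,W)=(H^0(K_C+D),H^0(2K_C-D))$ up to swapping the two $\mathbb{P}^3$ factors: the resolvent $0\to W^\ast\otimes\mathcal{O}_{\mathbb{P}^2}(-2)\to V\otimes\mathcal{O}_{\mathbb{P}^2}(-1)\to\mathcal{O}_C(D)\to 0$ for $\mathcal{O}_C(D)$ becomes, after dualizing and twisting, a resolvent for $\mathcal{O}_C(K_C-D)$ (using $\mathcal{E}xt^1(\mathcal{O}_C(D),\mathcal{O}_{\mathbb{P}^2}(-3))\simeq\mathcal{O}_C(K_C-D)$ by the adjunction formula on $\mathbb{P}^2$). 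Therefore the threefold built from $(C,[K_C-D])$ is isomorphic to $X$ with the two projections interchanged; precomposing this isomorphism with the automorphism of $C$ induced by $g$ (which matches $[D]$ with $[K_C-D]$) yields an automorphism of $X$ swapping $E$ and $E^\prime$, so $\mathrm{Aut}(X)\ne\mathrm{Aut}(\mathbb{P}^3,C_6)$.

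For the direction $(\Rightarrow)$, suppose $\mathrm{Aut}(X)\ne\mathrm{Aut}(\mathbb{P}^3,C_6)$, so there is $\sigma\in\mathrm{Aut}(X)$ swapping $E$ and $E^\prime$. Then $\sigma$ interchanges the two morphisms $\pi$ and $\pi^\prime$, hence conjugates the diagram \eqref{equation:diagram}; in particular $\sigma$ carries cubic surfaces through $C_6$ to cubic surfaces through $C_6^\prime$, i.e. it intertwines the linear systems $\mathbb{P}(V^\ast)$ and $\mathbb{P}(W^\ast)$ on the two factors. Via the homomorphism $\eta\colon\mathrm{Aut}(X)\to\mathrm{Aut}(C)$ of Lemma~\ref{lemma:Aut-C2} we set $g=\eta(\sigma)$. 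The functoriality in Remark~\ref{remark:Kuznetsov} / Lemma~\ref{lemma:Aut-C2} says $\eta$ is built from the $\mathrm{Aut}(X)$-action on the net of $(1,1)$-divisors through $X$, and under $\eta$ the restriction of $H$ from the first factor corresponds to $\mathcal{O}_C(K_C+D)$ while $H^\prime$ from the second corresponds to $\mathcal{O}_C(K_C+(K_C-D))=\mathcal{O}_C(2K_C-D)$. Since $\sigma$ swaps the roles of the two factors, $g$ pulls back the class $|K_C+(K_C-D)|$ to $|K_C+D|$, giving $g^\ast(2K_C-D)\sim K_C+D$, equivalently (subtracting $K_C$ and using that $g^\ast K_C\sim K_C$) $g^\ast(D)\sim K_C-D$ after replacing $g$ by $g^{-1}$. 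This is the desired element of $\mathrm{Aut}(C)$.

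The main obstacle is making the duality in Remark~\ref{remark:Kuznetsov} precise enough to conclude the curve built from $[K_C-D]$ is $X$ with factors swapped, rather than merely a threefold in the same deformation family: one must check that the Beilinson resolvent of $\mathcal{O}_C(D)$, read "from the other side," really is the resolvent of $\mathcal{O}_C(K_C-D)$ and that the map $\rho$ and its kernel are the same subspace of matrices, only with rows and columns transposed. I would handle this by the $\mathcal{E}xt$ computation indicated above (local duality on $\mathbb{P}^2$ applied to the two-term locally free resolution), which identifies $\mathcal{E}xt^1(\mathcal{O}_C(D),\omega_{\mathbb{P}^2})$ with $\mathcal{O}_C(K_C-D)$ and turns the resolvent into its transpose; the equivariance is then automatic from the equivariance already recorded in Remark~\ref{remark:Kuznetsov}. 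Once this identification is in hand, both implications are short.
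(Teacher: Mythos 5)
Your proposal is correct and takes essentially the same route as the paper's proof: both directions rest on the identification from Remark~\ref{remark:Kuznetsov} of the two copies of $\mathbb{P}^3$ in \eqref{equation:diagram} with $\mathbb{P}(H^0(\mathcal{O}_C(K_C+D))^\vee)$ and $\mathbb{P}(H^0(\mathcal{O}_C(2K_C-D))^\vee)$, using such an identification induced by $g$ to produce the swapping automorphism, and extracting $g$ from a swapping automorphism via the homomorphism of Lemma~\ref{lemma:Aut-C2}. The $\mathcal{E}xt$-duality of the Beilinson resolvent that you work out merely makes explicit the symmetry the paper leaves implicit.
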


\begin{proof}
By Remark~\ref{remark:Kuznetsov}, the~left copy of $\mathbb{P}^3$ in~\eqref{equation:diagram} can be be identified with $\mathbb{P}(H^0(\mathcal{O}_{C}(K_C+D))^\vee)$,
while the~right copy of $\mathbb{P}^3$ can be be identified with $\mathbb{P}(H^0(\mathcal{O}_{C}(2K_C-D))^\vee)$.
Thus, if $\mathrm{Aut}(C)$ contains an automorphism $g$ such that $g^*(D)\sim K_C-D$,
we can use it to identify both copies of $\mathbb{P}^3$ in~\eqref{equation:diagram},
which will give us an~automorphism of $X$ that swaps exceptional surfaces of the~blow ups $\pi$ and $\pi^\prime$.

Vice versa, if the~group $\mathrm{Aut}(X)$ is larger than $\mathrm{Aut}(\mathbb{P}^3,C_6)$,
it follows from the~proof of Lemma~\ref{lemma:Aut-C2} that there exists $g\in\mathrm{Aut}(C)$ such that $g^*(D)\sim K_C-D$.
\end{proof}

Recall that $\mathrm{Aut}(\mathbb{P}^3,C_6)\simeq\mathrm{Aut}(C,[D])$, where $D$ is a~divisor on $C$ of degree $2$ that satisfies \eqref{equation:very-ample}.
Using Remark~\ref{remark:symmetric}, Lemma~\ref{lemma:Aut} and its proof, we obtain

\begin{corollary}
\label{corollary:Aut}
One of the~following three cases holds:
\begin{itemize}
\item $2D\sim K_C$ and $\mathrm{Aut}(X)\simeq\mathrm{Aut}(C,[D])\times\mumu_2$,
\item $2D\not\sim K_C$, there is $g\in\mathrm{Aut}(C)$ such that $g^*(D)\sim K_C-D$, and
$$
\mathrm{Aut}(X)\simeq\big\langle\mathrm{Aut}(C,[D]),g\big\rangle.
$$
\item $\mathrm{Aut}(X)\simeq\mathrm{Aut}(C,[D])$, and $g^*(D)\not\sim K_C-D$ for every $g\in\mathrm{Aut}(C)$.
\end{itemize}
\end{corollary}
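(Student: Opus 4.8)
The plan is to run the trichotomy dictated by the class of $2D$ on $C$, using Lemma~\ref{lemma:Aut} as the engine and Remark~\ref{remark:symmetric} to handle the symmetric case. Recall the exact sequence $1\to\mathrm{Aut}(\mathbb{P}^3,C_6)\to\mathrm{Aut}(X)\to\mumu_2$ from Section~\ref{section:intro} together with the identification $\mathrm{Aut}(\mathbb{P}^3,C_6)\simeq\mathrm{Aut}(C,[D])$. By Lemma~\ref{lemma:Aut}, the last arrow is surjective --- equivalently $\mathrm{Aut}(\mathbb{P}^3,C_6)\subsetneq\mathrm{Aut}(X)$ with index $2$ --- if and only if there is $g\in\mathrm{Aut}(C)$ with $g^*(D)\sim K_C-D$. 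The three cases in the statement are then: (i) $2D\sim K_C$; (ii) $2D\not\sim K_C$ but such a $g$ exists; (iii) no such $g$ exists, which already forces $2D\not\sim K_C$ by taking $g=\mathrm{id}$. These cases are mutually exclusive and exhaust all possibilities, so it remains only to identify $\mathrm{Aut}(X)$ in each.

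Cases (i) and (iii) are immediate. In case (iii), Lemma~\ref{lemma:Aut} gives $\mathrm{Aut}(X)=\mathrm{Aut}(\mathbb{P}^3,C_6)\simeq\mathrm{Aut}(C,[D])$ with $g^*(D)\not\sim K_C-D$ for every $g$, which is the third alternative. In case (i) we have $D\sim K_C-D$, so $g=\mathrm{id}$ works and $\mathrm{Aut}(X)\ne\mathrm{Aut}(\mathbb{P}^3,C_6)$; but here Remark~\ref{remark:symmetric} applies: the matrices $M_1,M_2,M_3$ may be taken symmetric, $X$ carries the coordinate-swap involution, and $\mathrm{Aut}(X)\simeq\mathrm{Aut}(\mathbb{P}^3,C_6)\times\mumu_2\simeq\mathrm{Aut}(C,[D])\times\mumu_2$.

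The substantive case is (ii), where one uses not just the statement but the proof of Lemma~\ref{lemma:Aut}. There Lemma~\ref{lemma:Aut} gives $|\mathrm{Aut}(X)|=2\,|\mathrm{Aut}(C,[D])|$, and I would feed $\mathrm{Aut}(X)$ into the homomorphism $\eta\colon\mathrm{Aut}(X)\to\mathrm{Aut}(C)$ of Lemma~\ref{lemma:Aut-C2}: its image contains $\eta(\mathrm{Aut}(\mathbb{P}^3,C_6))=\mathrm{Aut}(C,[D])$, and the construction in the proof of Lemma~\ref{lemma:Aut} realizes the extra automorphism of $X$ as the one induced by $g$, so $g$ lies in the image as well. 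Since $g^*[D]=[K_C-D]\ne[D]$, the subgroup $\langle\mathrm{Aut}(C,[D]),g\rangle\subseteq\mathrm{Aut}(C)$ contains $\mathrm{Aut}(C,[D])$ with index $2$ --- note that $g$ normalizes $\mathrm{Aut}(C,[D])$ and $g^2$ fixes $[D]$ --- so it has order $2\,|\mathrm{Aut}(C,[D])|=|\mathrm{Aut}(X)|$. Comparing cardinalities forces $\eta$ to be injective with image exactly $\langle\mathrm{Aut}(C,[D]),g\rangle$, i.e. $\mathrm{Aut}(X)\simeq\langle\mathrm{Aut}(C,[D]),g\rangle$; alternatively one may invoke Corollary~\ref{corollary:Aut-1}, noting that $2D\not\sim K_C$ together with Remark~\ref{remark:symmetric} excludes the split $\mathrm{Aut}(C,[D])\times\mumu_2$ shape, so $\eta$ must be injective and its image is computed as above. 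The main obstacle is precisely this last bookkeeping: one has to check that the element of $\mathrm{Aut}(C)$ produced by the extra $X$-automorphism is genuinely $g$ (up to conjugacy) and that $\eta$ is injective in case (ii), both of which rely on the functoriality recorded in Remark~\ref{remark:Kuznetsov} and on $2D\not\sim K_C$ ruling out the product picture.
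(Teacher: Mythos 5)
Your proposal is correct and follows essentially the same route as the paper, which derives this corollary directly from Remark~\ref{remark:symmetric}, Lemma~\ref{lemma:Aut} and the proof of that lemma (via the homomorphism $\eta$ of Lemma~\ref{lemma:Aut-C2}); your write-up simply makes explicit the trichotomy and the index/injectivity bookkeeping that the paper leaves implicit. No gaps.
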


\begin{corollary}
\label{corollary:Aut-2}
If $\mathrm{Aut}(X)$ is not isomorphic to any subgroup of $\mathrm{Aut}(C)$, then $2D\sim K_C$.
\end{corollary}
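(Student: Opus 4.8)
The plan is to prove the contrapositive: assuming $2D\not\sim K_C$, I will show that $\mathrm{Aut}(X)$ is isomorphic to a subgroup of $\mathrm{Aut}(C)$. For this I would simply invoke the trichotomy of Corollary~\ref{corollary:Aut}. Since $2D\not\sim K_C$, the first of the three cases there is excluded, so we must be in one of the remaining two.

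In the third case of Corollary~\ref{corollary:Aut} we have $\mathrm{Aut}(X)\simeq\mathrm{Aut}(C,[D])$, and $\mathrm{Aut}(C,[D])$ is by construction a subgroup of $\mathrm{Aut}(C)$ (recall $\mathrm{Aut}(\mathbb{P}^3,C_6)\simeq\mathrm{Aut}(C,[D])\subset\mathrm{Aut}(C)$, as used throughout Section~\ref{section:Aut}), so the conclusion holds. In the second case we have $\mathrm{Aut}(X)\simeq\langle\mathrm{Aut}(C,[D]),g\rangle$ for some $g\in\mathrm{Aut}(C)$ with $g^*(D)\sim K_C-D$; here both $\mathrm{Aut}(C,[D])$ and $g$ lie inside $\mathrm{Aut}(C)$, so the subgroup they generate is again a subgroup of $\mathrm{Aut}(C)$, and once more $\mathrm{Aut}(X)$ embeds into $\mathrm{Aut}(C)$. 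This establishes the contrapositive, hence the corollary.

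Alternatively, one could route the argument through Corollary~\ref{corollary:Aut-1}: if $\mathrm{Aut}(X)$ is not isomorphic to any subgroup of $\mathrm{Aut}(C)$, that corollary forces $\mathrm{Aut}(X)\simeq\mathrm{Aut}(\mathbb{P}^3,C_6)\times\mumu_2$, so in particular $\mathrm{Aut}(X)$ contains an element swapping $E$ and $E^\prime$; Lemma~\ref{lemma:Aut} then yields $g\in\mathrm{Aut}(C)$ with $g^*(D)\sim K_C-D$, and if $2D\not\sim K_C$ held this would put us in the second case of Corollary~\ref{corollary:Aut}, contradicting that $\mathrm{Aut}(X)$ is not a subgroup of $\mathrm{Aut}(C)$. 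So $2D\sim K_C$.

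There is no genuine obstacle here: the statement is a bookkeeping consequence of Corollary~\ref{corollary:Aut}, which in turn is assembled from Remark~\ref{remark:symmetric}, Lemma~\ref{lemma:Aut}, and Corollary~\ref{corollary:Aut-1}. The only point that needs a moment's care is that in the second case the abstract isomorphism $\mathrm{Aut}(X)\simeq\langle\mathrm{Aut}(C,[D]),g\rangle$ is in fact realized by a concrete subgroup of $\mathrm{Aut}(C)$ — but this is precisely how $\langle\mathrm{Aut}(C,[D]),g\rangle$ was produced, namely via the homomorphism $\eta$ of Lemma~\ref{lemma:Aut-C2} and the proof of Lemma~\ref{lemma:Aut}, so it comes for free.
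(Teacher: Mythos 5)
Your argument is correct and is essentially the paper's own: Corollary~\ref{corollary:Aut-2} is stated as an immediate consequence of the trichotomy in Corollary~\ref{corollary:Aut}, and reading off that the second and third cases both realize $\mathrm{Aut}(X)$ as a subgroup of $\mathrm{Aut}(C)$ is exactly the intended (unwritten) proof. Your alternative route via Corollary~\ref{corollary:Aut-1} and Lemma~\ref{lemma:Aut} is also fine but adds nothing beyond the first argument.
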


Using Corollary~\ref{corollary:Aut}, we can find all possibilities for $\mathrm{Aut}(X)$,
but this requires a lot of work, because we have to analyze $\mathrm{Pic}^G(C)$ for every subgroup $G\subset\mathrm{Aut}(C)$.
This can be done using

\begin{proposition}[{\cite{Dolgachev1999}}]
\label{proposition:Dolgachev}
Let $G$ be a subgroup in $\mathrm{Aut}(C)$.
Then there exists exact sequence
$$
1\rightarrow \mathrm{Hom}\big(G,\mathbb{C}^*\big)\rightarrow\mathrm{Pic}\big(G,C\big)\rightarrow\mathrm{Pic}^G\big(C\big)\rightarrow H^2\big(G,\mathbb{C}^*\big)\rightarrow 1,
$$
where $\mathrm{Pic}(G,C)$ is the~group of $G$-linearized line bundles on $C$ modulo $G$-equivariant isomorphisms.
\end{proposition}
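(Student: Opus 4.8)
The plan is to obtain this sequence directly from the forgetful homomorphism
$$
\Phi\colon\mathrm{Pic}(G,C)\longrightarrow\mathrm{Pic}(C),\qquad \big(L,\{\phi_g\}\big)\longmapsto L,
$$
where a $G$-linearization is a family of isomorphisms $\phi_g\colon g^*L\xrightarrow{\ \sim\ }L$ satisfying the usual cocycle identity (equivalently, one may run the Hochschild--Serre spectral sequence $H^p(G,H^q(C,\mathcal O_C^*))\Rightarrow H^{p+q}_G(C,\mathcal O_C^*)$ with $\mathrm{Pic}(G,C)=H^1_G(C,\mathcal O_C^*)$ and read off its low-degree exact sequence; I will describe the hands-on version). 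First I would identify the two end terms. If $L$ is linearizable then $g^*L\simeq L$ for all $g$, so the image of $\Phi$ lies in $\mathrm{Pic}^G(C)$. For the kernel: since $C$ is connected and projective, $H^0(C,\mathcal O_C^*)=\mathbb C^*$ with trivial $G$-action, so a linearization of $\mathcal O_C$ is just a map $g\mapsto u_g\in\mathbb C^*$ with $u_{gh}=u_gu_h$, i.e.\ an element of $\mathrm{Hom}(G,\mathbb C^*)$, and two such are $G$-equivariantly isomorphic only if equal (an automorphism of $\mathcal O_C$ is a global scalar, fixed by $G$). Hence $\ker\Phi\cong\mathrm{Hom}(G,\mathbb C^*)$, which gives exactness of the left part $1\to\mathrm{Hom}(G,\mathbb C^*)\to\mathrm{Pic}(G,C)\to\mathrm{Pic}^G(C)$.

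Next I would construct the connecting map $\delta\colon\mathrm{Pic}^G(C)\to H^2(G,\mathbb C^*)$. Given $L$ with $g^*L\simeq L$, choose isomorphisms $\psi_g\colon g^*L\xrightarrow{\ \sim\ }L$ with no cocycle condition imposed, and define $c_{g,h}\in\mathbb C^*$ by $\psi_g\circ g^*(\psi_h)=c_{g,h}\,\psi_{gh}$. Associativity of composition shows $c$ is a $2$-cocycle; replacing $\psi_g$ by $\lambda_g\psi_g$ multiplies $c$ by the coboundary of $\{\lambda_g\}$, so $\delta(L)=[c]$ is well defined; it is additive in $L$ by tensoring; and $[c]=0$ precisely when the $\psi_g$ can be rescaled to a genuine linearization, i.e.\ precisely when $L\in\operatorname{im}\Phi$. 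This gives exactness at $\mathrm{Pic}^G(C)$.

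The remaining and genuinely hardest step is surjectivity of $\delta$: every $\alpha\in H^2(G,\mathbb C^*)$ must be realized by a $G$-invariant line bundle. Here I would use that $G$ is finite, so $\alpha$ has finite order $n$ and, via $1\to\mu_n\to\mathbb C^*\xrightarrow{(-)^n}\mathbb C^*\to1$ together with divisibility of $\mathbb C^*$, lifts to $\bar\alpha\in H^2(G,\mu_n)$; let $1\to\mu_n\to\widehat G\to G\to1$ be the corresponding central extension, so $\widehat G$ acts on $C$ through $G$ with $\mu_n$ acting trivially. One then has to exhibit a line bundle on $C$ with a $\widehat G$-linearization under which $\mu_n$ acts through its tautological character $\mu_n\hookrightarrow\mathbb C^*$; its underlying bundle is $G$-invariant with $\delta$-class $\alpha$. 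To produce it I would exploit that $\mathrm{Pic}(C)$ is large: the model case is Mumford's theta group, where an invertible sheaf carries exactly such a Heisenberg symmetry, and concretely, since $\mathrm{Pic}^0(C)$ is a divisible group on which $G$ acts, one can extract suitable roots of $G$-invariant line bundles and arrange the prescribed $\widehat G$-action — with the genus-$0$ case handled directly by $\mathcal O_{\mathbb P^1}(1)$ and a projective representation of $G$ with cocycle $\bar\alpha$.

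The main obstacle is precisely this surjectivity: exactness on the left and in the middle is formal descent and uses nothing about $C$ beyond connectedness and properness, but surjectivity is \emph{false} for varieties with small Picard group, so the argument must use the concrete geometry of curves (divisibility of the Jacobian, respectively enough projective embeddings). A secondary point to watch throughout is the bookkeeping of conventions — whether one writes $g^*h^*=(gh)^*$ or $(hg)^*$, and whether linearizations run $g^*L\to L$ or $L\to g^*L$ — since an inversion there flips signs everywhere and, if unnoticed, corrupts the very definition of $\delta$.
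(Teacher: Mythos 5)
The paper offers no proof of this proposition --- it is quoted directly from \cite{Dolgachev1999} --- so your argument can only be measured against the standard one. Your treatment of the first three terms is correct and complete: the identification of the kernel of $\mathrm{Pic}(G,C)\to\mathrm{Pic}^G(C)$ with $\mathrm{Hom}(G,\mathbb{C}^*)$ uses exactly that $H^0(C,\mathcal{O}_C^*)=\mathbb{C}^*$ with trivial $G$-action, and your obstruction cocycle $c_{g,h}$, together with its well-definedness, additivity, and the equivalence ``$[c]=0$ iff $L$ is linearizable'', is precisely the transgression $d_2$ of the spectral sequence $H^p\big(G,H^q(C,\mathcal{O}_C^*)\big)\Rightarrow H^{p+q}_G(C,\mathcal{O}_C^*)$ that you mention.

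The gap is where you say it is: the surjectivity of $\delta$, i.e.\ the final ``$\rightarrow 1$'', is asserted but not proved. Your plan --- lift $\alpha$ to $H^2(G,\mu_n)$, form the central extension $\widehat{G}$, and produce the required bundle by ``extracting roots of $G$-invariant line bundles'' in the divisible group $\mathrm{Pic}^0(C)$ --- does not, as written, produce anything: an $n$-th root of a $G$-invariant (indeed canonically linearizable) bundle need not itself be $G$-invariant, and even when it is, nothing in the construction controls which class of $H^2(G,\mathbb{C}^*)$ it realizes. The genus-zero aside is also off: you cannot ``choose a projective representation of $G$ with cocycle $\bar\alpha$'', because the $G$-action on $\mathbb{P}^1$ is the given one, so $\delta(\mathcal{O}_{\mathbb{P}^1}(1))$ is forced (it is the class of the binary cover of $G\subset\mathrm{PGL}_2$ in $\mathrm{SL}_2$); surjectivity there is the separate, checkable fact that this class generates the Schur multiplier. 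The efficient way to close the gap is to extend the five-term sequence one step further, $\cdots\to H^2(G,\mathbb{C}^*)\to H^2_G(C,\mathcal{O}_C^*)$, and to show that the last group vanishes when $C$ is a curve: pushing $\mathbb{G}_m$ forward along $[C/G]\to C/G$, the graded pieces of $H^2_G(C,\mathcal{O}_C^*)$ are $H^2(C/G,\mathbb{G}_m)=0$ by Tsen's theorem, the $H^1$ of a skyscraper sheaf, and the $H^0$ of a skyscraper with stalks $H^2(\mathrm{Stab}_P,\mathbb{C}^*)$, which vanish because stabilizers of points of a smooth curve are cyclic --- the same fact (\cite[Lemma~2.7]{FZ}) that the paper invokes in the proof of Corollary~\ref{corollary:3}. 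This is genuinely where the hypothesis that $C$ is a curve enters, exactly as you suspected; without it the statement with ``$\rightarrow 1$'' is false, as your example of a variety with small Picard group already shows.
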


\noindent and

\begin{remark}
\label{remark:Dolgachev}
Let $G$ be a subgroup in $\mathrm{Aut}(C)$, let $\Sigma_1,\ldots,\Sigma_n$ be all $G$-orbits in $C$ of length less that~$|G|$.
We may assume that $|\Sigma_i|\geqslant |\Sigma_j|$ for $i\geqslant j$.
For every $i\in\{1,\ldots,n\}$, set
$$
e_i=\frac{|G|}{|\Sigma_i|}=\text{the order of the~stabilizer in $G$ of a point in $\Sigma_i$}.
$$
The~\emph{signature} of the~$G$-action on $C$ is the~tuple $\big[g;e_1,\ldots,e_n\big]$,
where $g$ is the~genus of the~curve~$C/G$. If $C/G\simeq\mathbb{P}^1$, then it follows from \cite{Dolgachev1999} that
$$
\mathrm{Pic}\big(G,C\big)\simeq\mathbb{Z}\oplus\mumu_{a_1}\oplus\mumu_{a_2}\oplus\cdots\oplus\mumu_{a_{n-1}}
$$
for $a_1=d_1, a_2=\frac{d_2}{d_1}, \ldots, a_{n-1}=\frac{d_{n-1}}{d_{n-2}}$, where
\begin{align*}
d_1&=\mathrm{gcd}(e_1,\ldots,e_n),\\
d_2&=\mathrm{gcd}(e_1e_2,e_1e_3,\ldots,e_ie_j,\dots,e_{n-1}e_n),\\
&\vdots\\
d_{n-1}&=\mathrm{gcd}(e_1e_2\cdots e_{n-1},\ldots,e_2\cdots e_{n-1}e_n).
\end{align*}
Moreover, if $\gamma$ is a generator of the~free part of $\mathrm{Pic}^G(C)$ in this case, then we have
$$
4=\mathrm{deg}(K_C)=\mathrm{lcm}(e_1,\ldots, e_n)\left(n-2-\sum_{i=1}^n\frac{1}{e_i}\right)\mathrm{deg}(\gamma).
$$
\end{remark}

Let us show how to compute $\mathrm{Pic}^G(C)$ in some cases.

\begin{example}
\label{example:S4}
Suppose that $\mathrm{Aut}(C)$ contains a~subgroup $G\simeq\mathfrak{S}_4$.
Then $C$ is given in $\mathbb{P}^2_{x,y,z}$ by
$$
x^4+y^4+z^4+\lambda (x^2y^2+x^2z^2+y^2z^2)=0
$$
for some $\lambda\in\mathbb{C}$ such that $\lambda\not\in\{-1,2,-2\}$. One can show that
$$
\mathrm{Aut}(C)\simeq\left\{\aligned
&\mathfrak{S}_4\ \text{if $\lambda\ne 0$ and $\lambda\ne \frac{-3\pm 3\sqrt{7}i}{2}$},\\
&\mumu_4^2\rtimes\mathfrak{S}_3\ \text{if $\lambda=0$},\\
&\mathrm{PSL}_2(\mathbb{F}_7)\ \text{if $\lambda=\frac{-3\pm 3\sqrt{7}i}{2}$}.
\endaligned
\right.
$$
We have $C/G\simeq\mathbb{P}^1$, and it follows from \cite{lmfdb} that the~signature is $[0;2,2,2,3]$.
Thus, using Remark~\ref{remark:Dolgachev}, we see that $\mathrm{Pic}(G,C)\simeq\mathbb{Z}\times\mumu_2^2$,
and the~free part of the~group $\mathrm{Pic}(G,C)$ is generated by $K_{C}$.
Moreover, using GAP, we compute $\mathrm{Hom}(G,\mathbb{C}^*)\simeq H^2(G,\mathbb{C}^*)\simeq\mumu_2$.
Therefore, using Proposition~\ref{proposition:Dolgachev}, we get the~following exact sequence of group homomorphisms:
$$
0\rightarrow\mathbb{Z}\times\mumu_2\rightarrow\mathrm{Pic}^G(C)\rightarrow \mumu_2\rightarrow 0.
$$
We also know from \cite{Disney-Hogg} that $\mathrm{Pic}(C)$ contains two $G$-invariant even theta-characteristics $\theta_1$ and $\theta_2$.
This immediately implies that $\mathrm{Pic}^G(C)=\langle\theta_1,\theta_2\rangle\simeq\mathbb{Z}\times\mumu_2$.
\end{example}

\begin{example}[{\cite{Dolgachev1999}}]
\label{example:PSL-2-7}
Suppose that $\mathrm{Aut}(C)\simeq\mathrm{PSL}_2(\mathbb{F}_7)$.
Then $C$ is given in $\mathbb{P}^2_{x,y,z}$ by
$$
xy^3+yz^3+zx^3=0.
$$
Set~$G=\mathrm{Aut}(C)$. Using Example~\ref{example:Edge}, we conclude that $\mathrm{Pic}^G(C)$ contains an~even theta-characteristic~$\theta$.
Now, arguing as in Example~\ref{example:S4}, we get $\mathrm{Pic}^G(C)=\langle\theta\rangle\simeq\mathbb{Z}$.
\end{example}

\begin{example}
\label{example:Fermat-48-3}
Suppose that $\mathrm{Aut}(C)\simeq\mumu_4^2\rtimes\mathfrak{S}_3$.
Then $C$ is given in $\mathbb{P}^2_{x,y,z}$ by
$$
x^4+z^4+z^4=0,
$$
the group $\mathrm{Aut}(C)$ contains a~unique subgroup isomorphic to $\mumu_4^2\rtimes\mumu_3$,
and $C$ is the~unique plane quartic curve admiting a~faithful $\mumu_4^2\rtimes\mumu_3$-action.
Let $G$ be this subgroup. Then the~signature~is~$[0;3,3,4]$.
Therefore, using Remark~\ref{remark:Dolgachev},
we get $\mathrm{Pic}(G,C)\simeq\mathbb{Z}\times\mumu_3$, where the~free part is generated~by~$K_{C}$.
Since $\mathrm{Hom}(G,\mathbb{C}^*)\simeq\mumu_3$ and $H^2(G,\mathbb{C}^*)\simeq\mumu_4$,
it follows from Proposition~\ref{proposition:Dolgachev} that
$$
\mathrm{Pic}^G(C)/\langle K_C\rangle\simeq\mumu_4.
$$
Moreover, we know from Section~\ref{subsection:48-3} that $\mathrm{Pic}^G(C)$ contains a~divisor $D$ of degree $2$.
Thus, we conclude that $\mathrm{Pic}^G(C)=\langle K_C,D\rangle\simeq\mathbb{Z}\times\mumu_2$, and $K_C-2D$ is a~two-torsion divisor.
\end{example}

\begin{example}
\label{example:Fermat-96-64}
Let $C$ be the~Fermat quartic curve from Example~\ref{example:Fermat-48-3}, and let $G=\mathrm{Aut}(C)\simeq\mumu_4^2\rtimes\mathfrak{S}_3$.
Then the~signature is $[0;2,3,8]$, so it follows from Remark~\ref{remark:Dolgachev} that
$$
\mathrm{Pic}(G,C)\simeq\mathbb{Z}\times\mumu_2,
$$
where the~free part is generated by $K_{C}$.
On can check that $\mathrm{Hom}(G,\mathbb{C}^*)\simeq\mumu_2$ and $H^2(G,\mathbb{C}^*)\simeq\mumu_2$.
We claim that $\mathrm{Pic}^G(C)$ contains no divisors of degree~$2$.
Indeed, if $\mathrm{Pic}^G(C)$ has a~divisor $D$ of degree~$2$, then~$|K_C+D|$ gives a~$G$-equivariant embedding $\phi\colon C\hookrightarrow\mathbb{P}^3$,
which contradicts to Lemmas~\ref{lemma:48-3-unique}~and~\ref{lemma:48-normalizer-PGL},
because $\mumu_2^3.\mathfrak{S}_4$ does not contain subgroups isomorphic to $G$.
Therefore, arguing as in Example~\ref{example:Fermat-48-3}, we see that $\mathrm{Pic}^G(C)=\langle K_C,\delta\rangle\simeq\mathbb{Z}\times\mumu_2$,
where $\delta$ is a~two-torsion divisor.
\end{example}

Using results described in Examples~\ref{example:S4}, \ref{example:PSL-2-7}, \ref{example:Fermat-48-3}, \ref{example:Fermat-96-64}, we get the~ following corollaries:

\begin{corollary}
\label{corollary:S4-nice}
If $\mathrm{Aut}(\mathbb{P}^3,C_6)$ has a~subgroup isomorphic to $\mathfrak{S}_4$,
then one of the~following holds:
\begin{itemize}
\item $\mathrm{Aut}(\mathbb{P}^3,C_6)\simeq\mathfrak{S}_4$ and $\mathrm{Aut}(X)\simeq\mathfrak{S}_4\times\mumu_2$,
\item $\mathrm{Aut}(\mathbb{P}^3,C_6)\simeq\mathrm{PSL}_2(\mathbb{F}_7)$ and $\mathrm{Aut}(X)\simeq\mathrm{PSL}_2(\mathbb{F}_7)\times\mumu_2$.
\end{itemize}
\end{corollary}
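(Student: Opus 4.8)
The plan is to reduce to the three possible isomorphism types of $\mathrm{Aut}(C)$, to pin down the class $[D]$ via the Picard computation of Example~\ref{example:S4}, and then to identify $\mathrm{Aut}(C,[D])$ from the subgroup structure of $\mathrm{PSL}_2(\mathbb{F}_7)$ and $\mumu_4^2\rtimes\mathfrak{S}_3$.

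First I would observe that, since $\mathrm{Aut}(\mathbb{P}^3,C_6)\simeq\mathrm{Aut}(C,[D])$ contains a subgroup $G\simeq\mathfrak{S}_4$, we have $\mathfrak{S}_4\simeq G\subseteq\mathrm{Aut}(C)$, so by Example~\ref{example:S4} the curve $C$ is the quartic $x^4+y^4+z^4+\lambda(x^2y^2+x^2z^2+y^2z^2)=0$ and $\mathrm{Aut}(C)$ is one of $\mathfrak{S}_4$, $\mumu_4^2\rtimes\mathfrak{S}_3$, $\mathrm{PSL}_2(\mathbb{F}_7)$. The divisor class $[D]$ has degree $2$ and is $G$-invariant, and Example~\ref{example:S4} gives $\mathrm{Pic}^G(C)=\langle\theta_1,\theta_2\rangle\simeq\mathbb{Z}\times\mumu_2$ with $\theta_1,\theta_2$ even theta-characteristics; the only degree-$2$ elements of this group are $\theta_1$ and $\theta_2$ (any degree-$2$ element is $\theta_1$ plus a $2$-torsion class, and $\theta_1+(\theta_1-\theta_2)=\theta_2$). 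Hence $[D]\in\{\theta_1,\theta_2\}$, so $2D\sim K_C$, and Remark~\ref{remark:symmetric} yields $\mathrm{Aut}(X)\simeq\mathrm{Aut}(\mathbb{P}^3,C_6)\times\mumu_2$. It remains to determine $\mathrm{Aut}(\mathbb{P}^3,C_6)=\mathrm{Aut}(C,[D])$ in each of the three cases.

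If $\mathrm{Aut}(C)\simeq\mathfrak{S}_4$, then $G\subseteq\mathrm{Aut}(C,[D])\subseteq\mathrm{Aut}(C)$ forces $\mathrm{Aut}(C,[D])\simeq\mathfrak{S}_4$, giving the first case. If $\mathrm{Aut}(C)\simeq\mathrm{PSL}_2(\mathbb{F}_7)$, then because $\mathfrak{S}_4$ is a maximal subgroup of $\mathrm{PSL}_2(\mathbb{F}_7)$ we get $\mathrm{Aut}(C,[D])\simeq\mathfrak{S}_4$ or $\mathrm{PSL}_2(\mathbb{F}_7)$, hence the first or the second case; both occur, since $\theta$ from Example~\ref{example:PSL-2-7} is one of $\theta_1,\theta_2$ and is $\mathrm{PSL}_2(\mathbb{F}_7)$-invariant, whereas the other is not and hence is stabilized exactly by $\mathfrak{S}_4$ (by maximality again). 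Finally, suppose $\mathrm{Aut}(C)\simeq\mumu_4^2\rtimes\mathfrak{S}_3$; here I must show that $\mathrm{Aut}(C,[D])$ cannot lie strictly between $G\simeq\mathfrak{S}_4$ and $\mumu_4^2\rtimes\mathfrak{S}_3$. As $24\mid|\mathrm{Aut}(C,[D])|\mid 96$, the only alternatives to $\mathfrak{S}_4$ are the whole group and a subgroup of index $2$. The whole group is excluded by Example~\ref{example:Fermat-96-64}, since $\mathrm{Pic}^{\mathrm{Aut}(C)}(C)$ contains no class of degree~$2$. And $\mumu_4^2\rtimes\mathfrak{S}_3$ has a unique subgroup of index $2$, namely $\mumu_4^2\rtimes\mumu_3$ — its abelianization being $\mumu_2$ by Example~\ref{example:Fermat-96-64} — which does not contain $\mathfrak{S}_4$: otherwise $\mathfrak{S}_4\cdot\mumu_4^2$ would have order at most $48$, forcing $|\mathfrak{S}_4\cap\mumu_4^2|\geqslant 8$, which is impossible since an abelian subgroup of $\mathfrak{S}_4$ has order at most $4$. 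Therefore $\mathrm{Aut}(C,[D])\simeq\mathfrak{S}_4$, again the first case, and the corollary follows.

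The main obstacle is this last step, controlling the subgroups of $\mumu_4^2\rtimes\mathfrak{S}_3$ sandwiched between $\mathfrak{S}_4$ and the full group; a secondary point requiring care is that the computation $\mathrm{Pic}^G(C)=\langle\theta_1,\theta_2\rangle$ of Example~\ref{example:S4} must hold uniformly for all three curves (so that $[D]$ is always one of the two $G$-invariant even theta-characteristics), which rests on the common signature $[0;2,2,2,3]$ of the $\mathfrak{S}_4$-action and on the input of \cite{Disney-Hogg}.
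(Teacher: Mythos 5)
Your proposal is correct and follows essentially the same route as the paper, which derives this corollary directly from Examples~\ref{example:S4}, \ref{example:PSL-2-7}, \ref{example:Fermat-48-3} and \ref{example:Fermat-96-64} without spelling out the details: the key points are that $[D]$ must be one of the two $\mathfrak{S}_4$-invariant even theta-characteristics (so $2D\sim K_C$ and Remark~\ref{remark:symmetric} applies), that $\mathfrak{S}_4$ is maximal in $\mathrm{PSL}_2(\mathbb{F}_7)$, and that in the Fermat case neither the full group $\mumu_4^2\rtimes\mathfrak{S}_3$ (no invariant degree-$2$ class) nor its unique index-$2$ subgroup $\mumu_4^2\rtimes\mumu_3$ (which contains no copy of $\mathfrak{S}_4$) can be the stabilizer. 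Your expansion of these steps, including the lattice argument for subgroups between $\mathfrak{S}_4$ and $\mumu_4^2\rtimes\mathfrak{S}_3$, is accurate and complete.
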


\begin{corollary}
\label{corollary:Klein-nice}
The smooth Fano threefold described in Example~\ref{example:Edge} is the~unique smooth Fano threefold in the~deformation family \textnumero 2.12
that admits a~faithful action of the~group $\mathrm{PSL}_2(\mathbb{F}_7)$.
\end{corollary}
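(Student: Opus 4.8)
The plan is to identify every such $X$ with the pair $(C,[D])$ in which $C$ is the Klein quartic and $[D]$ is its unique $\mathrm{PSL}_2(\mathbb F_7)$-invariant divisor class of degree $2$, to observe that this pair determines $X$ up to isomorphism, and to note that the threefold of Example~\ref{example:Edge} realizes it. So let $X$ be a smooth Fano threefold in the family \textnumero~2.12 carrying a faithful action of $G=\mathrm{PSL}_2(\mathbb F_7)$, presented as the blow up of $\mathbb P^3$ along a smooth genus-three sextic $C_6$ as in Section~\ref{section:intro}, with $C_6\simeq C$ a plane quartic and $\mathrm{Aut}(\mathbb P^3,C_6)\simeq\mathrm{Aut}(C,[D])$. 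Composing $G\hookrightarrow\mathrm{Aut}(X)$ with the homomorphism $\mathrm{Aut}(X)\to\mumu_2$ from the exact sequence of Section~\ref{section:intro} gives a homomorphism $G\to\mumu_2$, which is trivial because $G$ is simple and non-abelian; hence $G$ lies in the kernel $\mathrm{Aut}(\mathbb P^3,C_6)$. Since $\mathfrak S_4\subset G\subseteq\mathrm{Aut}(\mathbb P^3,C_6)$, Corollary~\ref{corollary:S4-nice} applies; as $|\mathrm{Aut}(\mathbb P^3,C_6)|\ge|G|=168$, its first alternative is impossible, whence $\mathrm{Aut}(\mathbb P^3,C_6)\simeq\mathrm{PSL}_2(\mathbb F_7)$ and $\mathrm{Aut}(X)\simeq\mathrm{PSL}_2(\mathbb F_7)\times\mumu_2$.

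Next I would pin down the pair $(C,[D])$. From $\mathrm{Aut}(C,[D])\simeq\mathrm{PSL}_2(\mathbb F_7)\subseteq\mathrm{Aut}(C)$ and the Hurwitz bound $|\mathrm{Aut}(C)|\le 84(g-1)=168$ for $g=3$, we get $\mathrm{Aut}(C)=\mathrm{PSL}_2(\mathbb F_7)$, so $C$ is the Klein quartic $\{xy^3+yz^3+zx^3=0\}$ (Example~\ref{example:PSL-2-7}) and $[D]\in\mathrm{Pic}^G(C)$ has degree $2$. By Example~\ref{example:PSL-2-7}, $\mathrm{Pic}^G(C)=\langle\theta\rangle\simeq\mathbb Z$ with $\theta$ an even theta-characteristic, so $\deg\theta=2$ and $\theta$ is the only class of degree $2$ in $\mathrm{Pic}^G(C)$. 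Therefore $[D]=\theta$, and $h^0(\mathcal O_C(\theta))=0$ so that \eqref{equation:very-ample} holds (this is in any case guaranteed by Example~\ref{example:Edge}).

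Finally, the complete linear system $|K_C+D|$ embeds $C$ in $\mathbb P^3$ canonically up to the action of $\mathrm{PGL}_4(\mathbb C)$, so $C_6$ is determined up to projective equivalence by $(C,[D])$, and hence so is $X=\mathrm{Bl}_{C_6}\mathbb P^3$. Since the threefold of Example~\ref{example:Edge} carries a faithful $\mathrm{PSL}_2(\mathbb F_7)$-action, the argument above shows that its associated pair is also $(\text{Klein quartic},\theta)$, so every $X$ as in the statement is isomorphic to it. The main obstacle is the rigidity step: one needs to know that the $G$-invariant part of the Picard group is as small as possible, namely $\mathrm{Pic}^G(C)\simeq\mathbb Z$ generated in degree $2$ — which is exactly the computation of Example~\ref{example:PSL-2-7} via Proposition~\ref{proposition:Dolgachev} and Remark~\ref{remark:Dolgachev} — so that no freedom remains in the choice of $[D]$; everything else is a formal consequence of Sections~\ref{section:intro} and~\ref{section:Aut}.
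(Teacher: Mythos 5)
Your proposal is correct and follows essentially the same route as the paper: the paper derives this corollary directly from Example~\ref{example:PSL-2-7}, i.e.\ from the computation $\mathrm{Pic}^G(C)=\langle\theta\rangle\simeq\mathbb{Z}$ for the Klein quartic, which is precisely the rigidity step you identify as the crux, the remaining reductions (simplicity of $\mathrm{PSL}_2(\mathbb{F}_7)$ forcing $G\subseteq\mathrm{Aut}(\mathbb{P}^3,C_6)$, the Hurwitz bound pinning down $C$, and uniqueness of the embedding by the complete linear system $|K_C+D|$) being the same formal consequences of Sections~\ref{section:intro} and~\ref{section:Aut} that the paper leaves implicit.
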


\begin{corollary}
\label{corollary:Fermat-nice}
The smooth Fano threefold described in Section~\ref{subsection:48-3} is the~only smooth Fano~\mbox{threefold} in the~family \textnumero 2.12
that admits a~faithful action of the~group $\mumu_4^2\rtimes\mumu_3$.
\end{corollary}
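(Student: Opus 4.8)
The plan is as follows. Let $X$ be a smooth Fano threefold in the family \textnumero 2.12 admitting a faithful action of $G\simeq\mumu_4^2\rtimes\mumu_3$, realized via a pair $(C,[D])$ as in Section~\ref{section:intro}, where $C$ is a smooth plane quartic and $D$ is a degree-$2$ divisor on $C$ satisfying \eqref{equation:very-ample}. I will show that $C$ is forced to be the Fermat quartic, that $[D]$ is then pinned down up to the exchange $[D]\leftrightarrow[K_C-D]$, and that this exchange does not change $X$; the resulting threefold is precisely the one constructed in Section~\ref{subsection:48-3}, and by Corollary~\ref{corollary:3} it is $K$-stable.

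First, $G$ has no subgroup of index $2$, since $\mathrm{Hom}(G,\mathbb{C}^*)\simeq\mumu_3$ (recorded in Section~\ref{subsection:48-3}). Applied to the exact sequence
$$
1\to\mathrm{Aut}\big(\mathbb{P}^3,C_6\big)\to\mathrm{Aut}(X)\to\mumu_2,
$$
this forces $G\cap\mathrm{Aut}(\mathbb{P}^3,C_6)=G$, i.e. $G\subseteq\mathrm{Aut}(\mathbb{P}^3,C_6)\simeq\mathrm{Aut}(C,[D])$. Hence, via the embedding of Lemma~\ref{lemma:Aut-C2}, the group $\mathrm{Aut}(C)$ contains a subgroup $G_0\simeq\mumu_4^2\rtimes\mumu_3$ with $[D]\in\mathrm{Pic}^{G_0}(C)$. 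Since $C$ is non-hyperelliptic (it is a smooth plane quartic), Example~\ref{example:Fermat-48-3} — which rests on the classification of automorphism groups of genus-three curves \cite{Bars,DolgachevBook} — identifies $C$ with the Fermat quartic, shows $\mathrm{Aut}(C)\simeq\mumu_4^2\rtimes\mathfrak{S}_3$, and identifies $G_0$ with its unique subgroup isomorphic to $\mumu_4^2\rtimes\mumu_3$.

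Next I would feed in the computation of $\mathrm{Pic}^{G_0}(C)$ from Example~\ref{example:Fermat-48-3}: $\mathrm{Pic}^{G_0}(C)=\langle K_C,D_0\rangle\simeq\mathbb{Z}\times\mumu_2$, where $D_0$ is the degree-$2$ class produced in Section~\ref{subsection:48-3} and $K_C-2D_0$ generates the torsion. Every element of this group has the form $mD_0+\varepsilon(K_C-2D_0)$ with $m\in\mathbb{Z}$, $\varepsilon\in\{0,1\}$, and has degree $2m$; hence the only degree-$2$ classes in $\mathrm{Pic}^{G_0}(C)$ are $[D_0]$ and $[D_0]+(K_C-2D_0)=[K_C-D_0]$. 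Both satisfy \eqref{equation:very-ample}: for $D_0$ this is part of the construction in Section~\ref{subsection:48-3}, and then $h^0(\mathcal{O}_C(K_C-D_0))=h^0(\mathcal{O}_C(D_0))=0$ by Riemann--Roch. Therefore $[D]\in\{[D_0],[K_C-D_0]\}$. Finally, the pairs $(C,[D_0])$ and $(C,[K_C-D_0])$ yield the same threefold: by Remark~\ref{remark:Kuznetsov} the right-hand copy of $\mathbb{P}^3$ in the diagram \eqref{equation:diagram} is $\mathbb{P}\big(H^0(\mathcal{O}_C(2K_C-D_0))^\vee\big)=\mathbb{P}\big(H^0(\mathcal{O}_C(K_C+(K_C-D_0)))^\vee\big)$, so $C_6^\prime$ is the embedding of $C$ by $|K_C+(K_C-D_0)|$, and the two threefolds differ only by interchanging $E$ and $E^\prime$. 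Thus $X$ is isomorphic to the threefold of Section~\ref{subsection:48-3}, which does carry a faithful $\mumu_4^2\rtimes\mumu_3$-action by construction.

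The substantive inputs are the two facts imported from Example~\ref{example:Fermat-48-3} — that a non-hyperelliptic genus-three curve with a faithful $\mumu_4^2\rtimes\mumu_3$-action is the Fermat quartic, and the precise structure of $\mathrm{Pic}^{G_0}(C)$ — so I expect no further essential difficulty; the one step that needs care is the reduction $G\subseteq\mathrm{Aut}(\mathbb{P}^3,C_6)$, which hinges on $G$ being unable to swap $E$ and $E^\prime$. A more geometric variant instead conjugates $G\subset\mathrm{PGL}_4(\mathbb{C})$ to the standard subgroup using Lemma~\ref{lemma:48-3-unique} — which requires checking, via the irreducibility of the relevant $4$-dimensional representation of $\widehat{G}$, that $G$ fixes no point of $\mathbb{P}^3$ — and then reads off from Theorem~\ref{theorem:d-6-g-3} that $C_6\in\{C_6,C_6^\prime,C_6^{\prime\prime},C_6^{\prime\prime\prime}\}$, all mutually conjugate under $G_{192,185}$ and hence giving isomorphic threefolds; there the no-fixed-point verification is the main obstacle.
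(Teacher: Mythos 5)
Your proposal is correct, and your primary argument reaches the conclusion by a genuinely different final step than the paper. The paper's proof is terse: it argues as in Example~\ref{example:Fermat-96-64} that $\mathrm{Aut}(\mathbb{P}^3,C_6)\simeq\mumu_4^2\rtimes\mumu_3$ and is conjugate in $\mathrm{PGL}_4(\mathbb{C})$ to the standard subgroup $G$ of Section~\ref{subsection:48-3} (via Lemma~\ref{lemma:48-3-unique}, which indeed requires the no-fixed-point check you flag), and then invokes Theorem~\ref{theorem:d-6-g-3} to conclude that $C_6$ is one of the four curves $C_6,C_6^\prime,C_6^{\prime\prime},C_6^{\prime\prime\prime}$, all conjugate under $G_{192,185}$ --- this is exactly the ``more geometric variant'' you sketch at the end. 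Your main route shares the opening reduction (no index-$2$ subgroups of $\mumu_4^2\rtimes\mumu_3$ forces $G\subseteq\mathrm{Aut}(\mathbb{P}^3,C_6)\simeq\mathrm{Aut}(C,[D])$, and the classification of genus-three curves forces $C$ to be Fermat), but then finishes on the curve side: you read off from $\mathrm{Pic}^{G_0}(C)=\langle K_C,D_0\rangle\simeq\mathbb{Z}\times\mumu_2$ of Example~\ref{example:Fermat-48-3} that the only invariant degree-$2$ classes are $[D_0]$ and $[K_C-D_0]$, and observe via Remark~\ref{remark:Kuznetsov} that these two pairs produce the same threefold with $E$ and $E^\prime$ interchanged. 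What this buys is independence from Theorem~\ref{theorem:d-6-g-3} and from the $\mathrm{PGL}_4$-conjugacy lemma (hence from the fixed-point verification); the cost is that you lean on the full strength of the $\mathrm{Pic}^{G_0}(C)$ computation, which in the paper is itself derived from Proposition~\ref{proposition:Dolgachev} together with the explicit class $D_0$ supplied by Section~\ref{subsection:48-3} --- there is no circularity in this, since the example only needs the existence of $D_0$, while you need the count of degree-$2$ classes. All the individual steps (the index-$2$ argument from $\mathrm{Hom}(G,\mathbb{C}^*)\simeq\mumu_3$, the Riemann--Roch identity $h^0(\mathcal{O}_C(K_C-D_0))=h^0(\mathcal{O}_C(D_0))$, and the degree bookkeeping in $\langle D_0,K_C-2D_0\rangle$) check out.
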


\begin{proof}
Suppose that $\mathrm{Aut}(X)$ has a subgroup isomorphic to $\mumu_4^2\rtimes\mumu_3$.
Then arguing as in Example~\ref{example:Fermat-96-64},
we see that $\mathrm{Aut}(\mathbb{P}^3,C_6)\simeq\mumu_4^2\rtimes\mumu_3$,
and $\mathrm{Aut}(\mathbb{P}^3,C_6)$ is conjugate to the~subgroup $G$ that has been described in Section~\ref{subsection:48-3}.
Thus, the~required assertion follows from Theorem~\ref{theorem:d-6-g-3}.
\end{proof}

Now, we are ready to prove Theorem~B.

\begin{proof}[Proof of Theorem~B]
It is enough to show that the~automorphism group $\mathrm{Aut}(X)$ is isomorphic to a~subgroup of $\mathrm{PSL}_2(\mathbb{F}_7)\times\mumu_2$ or $\mumu_4^2\rtimes\mathfrak{S}_3$.
Suppose this is not true. Let us seek for a contradiction.

Let $G=\mathrm{Aut}(C,[D])$.
Then $G$ is also not isomorphic to a subgroup of $\mathrm{PSL}_2(\mathbb{F}_7)\times\mumu_2$ or $\mumu_4^2\rtimes\mathfrak{S}_3$.
Therefore, using \cite{Disney-Hogg} and the~classification of automorphism groups of smooth plane quartic curves,
we see that $D$ is not an even theta-characteristic.
So, by Corollary~\ref{corollary:Aut-2}, the~group $\mathrm{Aut}(X)$ is isomorphic to a subgroup of the~group $\mathrm{Aut}(C)$.

Hence, using the~classification of automorphism groups of smooth plane quartic curves again,
we~conclude that the~group $\mathrm{Aut}(X)$ is isomorphic to one of the~following groups:
\begin{center}
$\mumu_9$, $\mumu_{12}$, $\mathrm{SL}_{2}(\mathbb{F}_3)$ (GAP ID is [24,3]), $\mumu_4.\mathfrak{A}_4$ (GAP ID is [48,33]),
\end{center}
and it follows from Corollary~\ref{corollary:Aut} that either $G=\mathrm{Aut}(X)$ or $G$ is a subgroup in $\mathrm{Aut}(X)$ of index~$2$.
Thus, we have the~following possibilities:

\begin{center}
\renewcommand\arraystretch{1.5}
\begin{tabular}{|c|c|c|c|c|c|c|}
\hline
$\mathrm{Aut}(X)$ & $\mumu_9$ & $\mumu_{12}$ & $\mumu_{12}$ & $\mathrm{SL}_{2}(\mathbb{F}_3)$ & $\mumu_4.\mathfrak{A}_4$ & $\mumu_4.\mathfrak{A}_4$ \\
\hline
$G$               & $\mumu_9$ & $\mumu_{6}$ & $\mumu_{12}$ & $\mathrm{SL}_{2}(\mathbb{F}_3)$ & $\mathrm{SL}_{2}(\mathbb{F}_3)$ & $\mumu_4.\mathfrak{A}_4$ \\
\hline
\end{tabular}
\end{center}

Recall that $D$ is a divisor on the~quartic curve $C$ such that $\mathrm{deg}(D)=2$, the~divisor $D$ satisfies~\ref{equation:very-ample}, and its class $[D]\in\mathrm{Pic}(C)$ is $G$-invariant.
Let us show that in each of our cases, such $D$ does not~exist.

First, using \cite{DolgachevBook,Bars}, Proposition~\ref{proposition:Dolgachev} and Remark~\ref{remark:Dolgachev},
we can describe the~equation of the~curve~$C$, the~signature of the~action of the~group $G$ on the~curve~$C$,
the structure of the~group $\mathrm{Pic}^G(S)$,
and the~degree of a generator $\gamma$ of the~free part of the~group $\mathrm{Pic}^G(C)$.
This gives the~following possibilities:

\begin{center}\renewcommand\arraystretch{1.5}
\begin{tabular}{|c|c|c|c|c|}
\hline
$G$ & Equation of $C$ & Signature & Structure of $\mathrm{Pic}^G(S)$ & $\mathrm{deg}(\gamma)$\\
\hline
$\mumu_{6}$&$y^4-x^3z+z^4=0$&$[0;2,3,3,6]$&$\mathbb{Z}\oplus\mathbb{Z}_3$&$1$\\
\hline
$\mumu_{9}$&$y^3z-x(x^3-z^3)=0$&$[0;3,9,9]$&$\mathbb{Z}\oplus\mathbb{Z}_3$&$1$\\
\hline
$\mumu_{12}$&$y^4-x^3z+z^4=0$&$[0;3,4,12]$&$\mathbb{Z}$&$1$\\
\hline
$\mathrm{SL}_2(3)$&$y^4-x^3z+z^4=0$&$[0;2,3,6]$&$\mathbb{Z}\oplus\mathbb{Z}_6$&$4$\\
\hline
$\mumu_4.\mathfrak{A}_4$&$y^4-x^3z+z^4=0$&$[0;2,3,12]$&$\mathbb{Z}$&$4$\\
\hline
\end{tabular}
\end{center}

In particular, if $G\cong\mathrm{SL}_2(3)$ or $G\simeq\mumu_4.\mathfrak{A}_4$, then $C$ does not have $G$-invariant divisors of degree~$2$.
Hence, we see that $G$ is isomorphic to one of the~following groups: $\mumu_6$, $\mumu_9$, $\mumu_{12}$.

Suppose that $G\simeq\mumu_{12}$. Then the~action of $G$ on $C$ is generated by
$$
[x:y:z]\mapsto\big[\omega_3x:iy:z\big],
$$
where $\omega_3$ is a primitive cube root of the~unity.
Then $G$ fixes the~point $P=[1:0:0]$, which implies that $\mathrm{Pic}^G(S)=\mathbb{Z}[P]$,
so that $D\sim 2P$, which contradicts to our assumption that $D$ satisfies \ref{equation:very-ample}.

Assume now that $G\simeq\mumu_9$. Then the~$G$-action on the~curve is given by
$$
[x:y:z]\mapsto\big[\omega_9x:\omega_9^{-3}y:z\big],
$$
where $\omega_9$ is a primitive ninth root of the~unity. Set $P_1=[1:0:0]$ and $P_2=[0:1:0]$.
Then
$$
\mathrm{Pic}^G(S)=\langle P_1,P_2\rangle,
$$
because $P_1$ and $P_2$ are fixed by the action of the group $G$, and the divisor $P_1-P_2$ is a $3$-torsion. Then $D$ is linearly equivalent to $2P_1$, $2P_2$ or $P_1+P_2$, which contradicts \ref{equation:very-ample}.

Finally, consider the~case where $G$ is isomorphic to $\mumu_6$. Then the~$G$-action is given by
 $$
[x:y:z]\mapsto\big[x:-y:\omega_3z\big],
$$
Set $P=[0:0:1]$, $\Sigma_2=[1:i:0]+[1:-i:0]$, and $\Sigma_2^\prime=[1:1:0]+[1:-1:0]$.
Then
$$
K_C\sim 4P\sim\Sigma_2+\Sigma_2^\prime,
$$
and the divisors $P$, $\Sigma_2$, $\Sigma^\prime$ are $G$-invariant. This gives $\mathrm{Pic}^G(S)=\langle P,\Sigma_2\rangle$, and  $2P-\Sigma_2$ is a $3$-torsion.
Then $D$ is linearly equivalent to $2P$, $\Sigma_2$, $\Sigma_2^\prime$, which contradicts \ref{equation:very-ample}.
\end{proof}

\end{document}